\newcommand*\quot[2]{{^{\textstyle #1}\big/_{\textstyle #2}}}
\g@addto@macro\normalsize{%
  \setlength\abovedisplayskip{10pt}
  \setlength\belowdisplayskip{10pt}
  \setlength\abovedisplayshortskip{5pt}
  \setlength\belowdisplayshortskip{8pt}
}
\newtheoremstyle{normal}
{5pt}
{5pt}
{\normalfont}
{}
{\bfseries}
{}
{0.4em}
{\bfseries{\thmname{#1}\thmnumber{ #2}.\thmnote{ \hspace{0.5em}(#3)\newline}}}
\newtheoremstyle{kursiv}
{5pt}
{5pt}
{\itshape}
{}
{\bfseries}
{}
{0.4em}
{\bfseries{\thmname{#1}\thmnumber{ #2}.\thmnote{ \hspace{0.5em}(#3)\newline}}}
\theoremstyle{kursiv}
\theoremstyle{normal}
\newtheorem{thm}{Theorem}[section]
\newtheorem{ex}[thm]{Example}
\newtheorem{rem}[thm]{Remark}
\newtheorem{cor}[thm]{Corollary}
\newtheorem{lem}[thm]{Lemma}
\newtheorem{prop}[thm]{Proposition}
\newtheorem{dfn}[thm]{Definition}
\renewcommand{\epsilon}{\varepsilon}
\newcommand{\T}{(T(t))_{t\geqslant0}}
\newcommand{\supp}{\operatorname{supp}\nolimits}
\renewcommand{\Re}{\operatorname{Re}\nolimits}
\renewcommand{\theta}{\vartheta}
\newcommand{\Bigosum}[2]{\ensuremath{\mathop{\textstyle\bigoplus}_{#1}^{#2}}}
\renewcommand{\bar}[1]{\overline{#1}}
\newcommand{\ran}{\operatorname{ran}\nolimits}
\newcommand{\e}{\operatorname{e}\nolimits}
\newcommand{\Cnull}{\operatorname{C_0}\nolimits}
\newcommand{\diag}{\operatorname{diag}\nolimits}
\definecolor{grey}{gray}{.3}
\renewcommand{\star}{*}
\newcommand{\rank}{\operatorname{rk}}
\newcommand{\Ls}{\operatorname{L}}
\newcommand{\AC}{\operatorname{AC}_{\operatorname{loc}}}
\newcommand{\dd}{\mathrm{d}}
\begin{document}
\allowdisplaybreaks

\title{Well-posedness of a class of hyperbolic partial\\differential equations on the semi-axis}

\author{Birgit Jacob\hspace{0.5pt}\MakeLowercase{$^{\text{1}}$} and Sven-Ake Wegner\hspace{0.5pt}\MakeLowercase{$^{\text{2}}$}}

\renewcommand{\thefootnote}{}
\hspace{-1000pt}\footnote{\hspace{5.5pt}2010 \emph{Mathematics Subject Classification}: Primary 93D15; Secondary 47D06.\vspace{1.6pt}}

\hspace{-1000pt}\footnote{\hspace{5.5pt}\emph{Key words and phrases}: $\Cnull$-semigroup, hyperbolic pde, port-Hamiltonian system, well-posedness, pde's on networks. \vspace{1.6pt}}

\hspace{-1000pt}\footnote{\hspace{0pt}$^{1}$\,University of Wuppertal, School of Mathematics and Natural Sciences, Gau\ss{}stra\ss{}e 20, 42119 Wuppertal, Germany,\linebreak\phantom{x}\hspace{1.2pt}Phone: +49\hspace{1.2pt}(0)\hspace{1.2pt}202\hspace{1.2pt}/\hspace{1.2pt}439\hspace{1.2pt}-\hspace{1.2pt}2527, Fax: +49\hspace{1.2pt}(0)\hspace{1.2pt}202\hspace{1.2pt}/\hspace{1.2pt}439\hspace{1.2pt}-\hspace{1.2pt}3724, E-Mail: bjacob@uni-wuppertal.de.\vspace{1.6pt}}

\hspace{-1000pt}\footnote{\hspace{0pt}$^{2}$\,Corresponding author: Teesside University, School of Science, Engineering \&{} Design, Stephenson Building, Middles-\linebreak\phantom{x}\hspace{1.2pt}brough, TS1\;3BX, United Kingdom, phone: +44\,(0)\,1642\:434\:82\:00, e-mail: s.wegner@tees.ac.uk.\vspace{1.6pt}}


\begin{abstract}
In this article we study a class of hyperbolic partial differential equations of order one on the semi-axis. The so-called port-Hamiltonian systems cover for instance the wave equation and the transport equation, but also networks of the aforementioned equations fit into this framework. Our main results firstly characterize the boundary conditions which turn the corresponding linear operator into the generator of a strongly continuous semigroup. Secondly, we equip the equation with inputs (control) and outputs (observation) at the boundary and prove that this leads to  a well-posed boundary control system. We illustrate our results via an example of coupled transport equations on a network, that allows to model transport from and to infinity. Moreover, we study a vibrating string of infinite length with one endpoint. Here, we show that our results allow to treat cases where the physical constants of the string tend to zero at infinity.
\end{abstract}

\maketitle


\section{Introduction}\label{SEC:INTRO}\smallskip

Let $n\geqslant1$ be a fixed integer, let $P_1\in\mathbb{C}^{n\times n}$ be Hermitian and invertible and let $P_0\in\mathbb{C}^{n\times n}$ be arbitrary. Let, for the moment, $\mathcal{H}\colon[0,\infty)\rightarrow\mathbb{C}^{n\times n}$ be continuous such that $\mathcal{H}(\xi)$ is positive and Hermitian for all $\xi\in[0,\infty)$. We consider the port-Hamiltonian partial differential equation
\begin{equation}\label{PHS-INTRO}
\frac{\partial x}{\partial t}(\xi,t)=P_1\frac{\partial}{\partial \xi}(\mathcal{H}(\xi)x(\xi,t))+P_0\hspace{0.5pt}\mathcal{H}(\xi)x(\xi,t)\:\text{ for }\:\xi\in[0,\infty),\:t\geqslant0
\end{equation}
on the semi-axis with the initial condition $x(\xi,0)=x_0(\xi)$ for $\xi\in[0,\infty)$. The matrix-valued function $\mathcal{H}$ is referred to as the Hamiltonian or the Hamiltonian density matrix.  The relevant boundary conditions for the port-Hamiltonian partial differential equation are given by
\begin{equation}\label{BDY-INTRO}
W_B\cdot\mathcal{H}(0)x(0,t)=0\:\text{ for }\:t\geqslant0
\end{equation}
where $W_B\in\mathbb{C}^{n_-\times n}$ is a matrix of rank $n_-$ and $n_-$ is the number of negative eigenvalues of $P_1$. The reason why exactly $n_-$ is the correct number of boundary conditions stems from a diagonalization technique we will sketch below and explain with all details in Section \ref{SEC:GEN}.

\medskip

In this paper we approach the partial differential equation above with concepts from operator and systems theory. For this reason we interpret \eqref{PHS-INTRO} as an abstract differential equation in a Hilbert space and a natural choice for the latter is the weighted $\Ls^2$-space 
\begin{equation}\label{SPACE-INTRO}
\Ls^2_{\mathcal{H}}(0,\infty) = \bigl\{x\colon [0,\infty)\rightarrow\mathbb{C}^n\:;\:x \text{ measurable and } \|x\|_{\Ls^2_{\mathcal{H}}(0,\infty)}^2=\int_{0}^{\infty}x(\xi)^{\star}\mathcal{H}(\xi)x(\xi)\dd\xi<\infty\bigr\}
\end{equation}
endowed with the scalar product $\langle{}\cdot,\cdot\rangle_{\Ls^2_{\mathcal{H}}(0,\infty)}=\langle{}\cdot,\mathcal{H}\cdot\rangle_{\Ls^2(0,\infty)}$. On a finite interval the corresponding definition has been used successfully in the past, see e.g., van der Schaft, Maschke \cite{SM}, Le Gorrec et.~al.~\cite{GZM2005}, Villegas  \cite{V2007}, Villegas et.~al.~\cite{VZGM2009}, Zwart et.~al.~\cite{ZGMV2010}, Engel \cite{Engel2013}, Augner, Jacob \cite{AJ2014}, Jacob, Zwart \cite{JZ}, Wegner \cite{BT}. On a finite interval $I$ our assumptions on $\mathcal{H}$ imply automatically that there are constants $m$, $M>0$ such that $m|\zeta|^2\leqslant \zeta^{\star}\mathcal{H}(\xi)\zeta\leqslant M|\zeta|^2$ holds for all $\xi\in I$ and all $\zeta\in\mathbb{C}^n$. From this it follows that $\Ls^2_{\mathcal{H}}(I)=\Ls^2(I)$ holds in the sense of equal linear spaces with equivalent norms. If one restricts the attention to classifying when a contraction semigroup is generated, one can even assume without loss of generality that $\mathcal{H}\equiv{}\mathbb{1}$ holds \cite[Section 7]{JZ}. On the other hand recent results by Jacob et.~al.~\cite{JMZ2015} show that for the case of possibly non-contractive semigroups the latter is not true. Although $\Ls^2_{\mathcal{H}}(I)=\Ls^2(I)$ holds in their setting with equivalent norms, and their proofs make use of this fact, it turns out that generation for the given $\mathcal{H}\not\equiv\mathbb{1}$ and generation with $\mathcal{H}\equiv\mathbb{1}$ are not equivalent. In this paper we are additionally confronted with the fact that on a non-compact domain the function $\mathcal{H}$ can be continuous without being bounded or being bounded away from zero, from whence it follows that neither $\Ls^2_{\mathcal{H}}(0,\infty)=\Ls^2(0,\infty)$ holds in the sense of linear spaces, nor that we have any estimates between the norms $\|\cdot\|_{\Ls^2_{\mathcal{H}}(0,\infty)}$ and $\|\cdot\|_{\Ls^2(0,\infty)}$ a priori.

\medskip

Having fixed an appropriate space \eqref{SPACE-INTRO}, we associate with the equation \eqref{PHS-INTRO} the operator
\begin{equation}\label{OP-INTRO}
\begin{array}{rcl}
Ax&=&\displaystyle P_1(\mathcal{H}x)'+P_0\hspace{0.5pt}\mathcal{H}x\\[9pt]
D(A)&=&\bigl\{x\in \Ls^2_{\mathcal{H}}(0,\infty)\:;\: (\mathcal{H}x)'\in\Ls^2_{\mathcal{H}}(0,\infty) \text{ and }W_B\cdot\mathcal{H}(0)x(0)=0\bigr\}
\end{array}
\end{equation}
where we encode the boundary conditions \eqref{BDY-INTRO} in its domain and understand $(\mathcal{H}x)'\in\Ls^2_{\mathcal{H}}(0,\infty)$ in the sense of a weak derivative that can be represented by an $\Ls^1_{\operatorname{loc}}$-function belonging to $\Ls^2_{\mathcal{H}}(0,\infty)$, cf.~Section \ref{SEC:PREP} for details. Notice that the above is at least well-defined if $P_0=0$ or if $\mathcal{H}$ is bounded. Our main results on generation, see Theorem \ref{PHS-THM} and Corollary \ref{PHS-COR}, will address precisely these two cases and characterize in terms of a matrix condition formulated via $W_B$, when $A\colon D(A)\rightarrow\Ls^2_{\mathcal{H}}(0,\infty)$ generates a $\Cnull$-semigroup. Our proof is inspired by a method used by Zwart et.~al.~\cite{ZGMV2010}, see also \cite[Section 13]{JZ} and \cite{JMZ2015}, which relies on a diagonalization
\begin{equation}\label{DIAG-INTRO}
P_1\mathcal{H}(\xi)=S(\xi)^{-1}\Delta(\xi)S(\xi)=S(\xi)^{-1}{\textstyle\begin{bmatrix}\Lambda(\xi) & 0\\0&\Theta(\xi)\end{bmatrix}}S(\xi)
\end{equation}
via matrix-valued functions $S$, $S^{-1}\colon[0,\infty)\rightarrow\mathbb{C}^{n\times n}$. The values of the functions $\Lambda\colon[0,\infty)\rightarrow\mathbb{C}^{n_+\times n_+}$ and $\Theta\colon[0,\infty)\rightarrow\mathbb{C}^{n_-\times n_-}$ are diagonal and positive, resp.~negative, matrices where $n_+$ is the number of positive and $n_-$ is the number of negative eigenvalues of $P_1\mathcal{H}(\xi)$. This number is by Sylvester's law of inertia independent of $\xi\in[0,\infty)$. The diagonal operator can now be treated by a divide-and-conquer strategy as each of its components generates a one-dimensional \textquotedblleft{}weighted shift semigroup\textquotedblright{}. From this it can be seen why the appropriate number of boundary conditions is $n_-$: it is the number of right shifts in the diagonal operator, each of which produces one boundary condition at zero. The $n_+$ left shifts produce no boundary conditions. In a second step the Weiss Theorem \cite{Weiss}, see Section \ref{INTERLUDE}, is applied to get all linear boundary conditions for the diagonal operator. This leaves us with a generation result on an $\Ls^2$-space weighted with $|\Delta|=(\Delta^{\star}\Delta)^{1/2}$. In the final step we need to pullback the latter to the $\Ls^2$-space weighted with the Hamiltonian $\mathcal{H}$ which can be achieved by interpreting $S\colon \Ls^2_{\mathcal{H}}(0,\infty)\rightarrow \Ls^2_{|\Delta|}(0,\infty)$, $x\mapsto Sx$, as a transformation of variables. All three steps require technical assumptions on $\mathcal{H}$, $\Delta$, $S$ and $S^{-1}$ which are in full detail given in Section \ref{SEC:GEN}. 

\medskip

Once the question of characterizing the generator property is settled, we add to the partial differential equation \eqref{PHS-INTRO} an input $u=u(t)$ and an output $y=y(t)$, i.e., we consider
\begin{equation}\label{INOUT-INTRO}
\begin{array}{rcl}
u(t)&=&W_{B,1}\mathcal{H}(0)x(0,t)\\[4.5pt]
0&=&W_{B,2}\mathcal{H}(0)x(0,t)\\[4.5pt]
y(t)&=&W_C\mathcal{H}(0)x(0,t)
\end{array}
\end{equation}
where we assume $W_C\in\mathbb{C}^{q\times n}$ and $W_B=[W_{B,1}\;\;W_{B,2}]^T$ with $W_{B,1}\in\mathbb{C}^{p\times n}$ to allow that not all boundary conditions are subject to a control but some have just zero input. In Section \ref{SEC:WP} we show that \eqref{PHS-INTRO} and \eqref{INOUT-INTRO} give rise to a boundary control system which is well-posed. That is, for every $\tau>0$ there exists $m_{\tau}>0$ such that for every $x_0\in\Ls^2_{\mathcal{H}}(0,\infty)$ with $(\mathcal{H}x)'\in\Ls^2_{\mathcal{H}}(0,\infty)$ and every $u\in\operatorname{C}^2([0,\tau],\mathbb{C}^p)$ with $u(0)=W_{B,1}\mathcal{H}(0)x_0(0)$ there is a unique classical solution $x=x(\xi,t)$ such that 
\begin{equation}
\|x(\cdot,\tau)\|_{\Ls^2_{\mathcal{H}}(0,\infty)}^2+\int_0^{\tau}\|y(t)\|^2\dd t\leqslant m_{\tau}\Big(\|x_0\|_{\Ls^2_{\mathcal{H}}(0,\infty)}^2+\int_0^{\tau}\|u(t)\|^2\dd t\Big)
\end{equation}
holds. For details on the notion of well-posedness see, e.g., Tucsnak, Weiss \cite{TW2014}, Staffans \cite{Staffans2005} or Jacob, Zwart \cite[Chapter 13]{JZ}.

\medskip

Finally, in Section \ref{SEC:EX} we discuss the \textquotedblleft{}weighted transport equation\textquotedblright{}, a network of transport equations, each defined on $[0,\infty)$, which are coupled at a central node, and a vibrating string of infinite length. We mention that the latter examples as well as our main results are related to recent results by Jacob, Kaiser \cite[Section 2.2 and Section 5]{JK} where the case of contraction semigroups associated with port-Hamiltonian partial differential equations is studied.

\bigskip


\section{Preparation}\label{SEC:PREP}\smallskip

In this section we first introduce weighted $\Ls^2$-spaces of scalar valued functions in one variable. Notice, that in contrast to previous results on port-Hamiltonian partial differential equations \eqref{PHS-INTRO} in this article the weighted spaces will not necessarily be isomorphic to the unweighted $\Ls^2$-space. Secondly we will repeat well-known facts about the relation of absolutely continuous functions and functions with an integrable weak derivative. Here, the relationship is presented from the point of view of weighted, instead of classical, $\Ls^2$-spaces. The third objective of this section is to prove a technical lemma that we will need later in the proofs of our generation results.

\smallskip

For an interval $I\subseteq\mathbb{R}$ and a continuous function $w\colon I\rightarrow(0,\infty)$ we consider
$$
\Ls^2_{w}(I) := \bigl\{x\colon I\rightarrow\mathbb{C}\:;\:x \text{ measurable and } \|x\|_{\Ls^2_{w}(I)}^2=\int_{I}w(\xi)|x(\xi)|^2\dd\xi<\infty\bigr\}
$$
which is a Hilbert space with respect to the scalar product
$$
\langle{}x,y\rangle{}_{\Ls^2_{w}(I)}:=\int_{I}\bar{x(\xi)}w(\xi)y(\xi)\dd\xi.
$$
The unweighted $\Ls^2$-spaces we obtain as the specialization $\Ls^2(I)=\Ls^2_{\mathbb{1}}(I)$. At some point we consider bounded intervals $[a,b]\subseteq I$ and write $\Ls^2_{w}(a,b)$ instead of $\Ls^2_{w|{(a,b)}}(a,b)$ to simplify our notation.  We note that the spaces $\Ls^2_{w}(a,b)$ and $\Ls^2(a,b)$ are equal as linear spaces and that their norms $\|\cdot\|_{\Ls^2_{w}(a,b)}$ and $\|\cdot\|_{\Ls^2(a,b)}$ are equivalent in view of $0<\inf_{\xi\in[a,b]}w(\xi)\leqslant\sup_{\xi\in [a,b]}w(\xi)<\infty$. Notice that for unbounded $I$ the spaces $\Ls^2_{w}(I)$ and $\Ls^2(I)$ need not to be equal and no inclusion is valid a priori.

\medskip

It is well-known that the operator $Ax=x'$ generates the shift semigroup when we consider the latter as an operator $A\colon D(A)\rightarrow \Ls^2(I)$ with
\begin{equation}\label{EQ-D}
D(A)=\bigl\{x\in \Ls^2(I)\:;\:x\in \operatorname{AC}(I)\text{ and }x'\in \Ls^2(I)\bigr\}=\bigl\{x\in \Ls^2(I)\:;\:x'\in \Ls^2(I)\bigr\}
\end{equation}
on a bounded interval $I\subseteq\mathbb{R}$. Here, in the first set, $x'$ stands for the derivative almost everywhere. In the second set, $x'$ is the distributional derivative and writing $x'\in \Ls^2(I)$ means firstly that $x'$ is a regular distribution, i.e., $x'\in \Ls^1_{\text{loc}}(I)$, and secondly that it belongs to $\Ls^2(I)$. Below we stick to using the notion of absolutely continuous functions, but a characterization as above is also true in the weighted case, see Lemma \ref{LEM-DIS}. We firstly recall the following. A function $y\colon I\rightarrow\mathbb{C}$ is locally absolutely continuous if $y|_{[a,b]}\colon [a,b]\rightarrow\mathbb{C}$ is absolutely continuous for every $[a,b]\subseteq I$. We write
$$
\AC(I):=\bigl\{x\colon I\rightarrow\mathbb{C}\:;\: x \text{ is locally absolutely continuous}\,\bigr\}
$$
for the space of all locally absolutely continuous functions. The following lemma repeats the well-known characterization of the elements of $\AC(I)$ in terms of the fundamental theorem of calculus.

\smallskip

\begin{lem}\label{AC-LEM} Let $I\subseteq\mathbb{R}$ be an interval. The function $y\colon I\rightarrow\mathbb{C}$ is locally absolutely continuous if and only if the following three conditions are satisfied.
\begin{compactitem}\vspace{3pt}
\item[(i)] $y$ is continuous on $I$,\vspace{5pt}

\item[(ii)] $y$ is differentiable almost everywhere in $I$ with $y'\in \Ls^1_{\text{loc}}(I)$,\vspace{2pt}

\item[(iii)] for all $\xi,\,\eta\in I$ we have $y(\xi)-y(\eta)=\int_{\eta}^{\xi}y'(\zeta)\dd\zeta$.\vspace{3pt}
\end{compactitem}
\end{lem}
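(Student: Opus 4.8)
The plan is to prove the equivalence by establishing each implication separately, and the core difficulty is that local absolute continuity is a statement about every compact subinterval, so I must reconcile the ``local'' nature of the definition with the global conditions (i)--(iii) stated on all of $I$.

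First I would prove the forward direction. Assume $y\in\AC(I)$. Fix an arbitrary compact $[a,b]\subseteq I$; then $y|_{[a,b]}$ is absolutely continuous by definition. I now invoke the classical (single-interval) version of the fundamental theorem of calculus for absolutely continuous functions on a compact interval: such a $y|_{[a,b]}$ is continuous, is differentiable almost everywhere on $[a,b]$ with derivative in $\Ls^1(a,b)$, and satisfies $y(\xi)-y(\eta)=\int_{\eta}^{\xi}y'(\zeta)\dd\zeta$ for all $\xi,\eta\in[a,b]$. Continuity on all of $I$ (condition (i)) follows because continuity is itself a local property and every point of $I$ lies in some such $[a,b]$. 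For (ii), the almost-everywhere derivative is defined pointwise and is independent of the enclosing interval, and $y'\in\Ls^1_{\mathrm{loc}}(I)$ is exactly the assertion that $y'\in\Ls^1(a,b)$ for every compact $[a,b]$, which we just obtained. For (iii), given any $\xi,\eta\in I$ I choose a single compact interval $[a,b]\subseteq I$ containing both (possible since $I$ is an interval, hence connected) and read off the integral identity on $[a,b]$.

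For the converse, assume (i)--(iii) and fix an arbitrary $[a,b]\subseteq I$; I must show $y|_{[a,b]}$ is absolutely continuous. By (ii) the function $g:=y'$ lies in $\Ls^1(a,b)$, so by (iii) we have $y(\xi)=y(a)+\int_a^{\xi}g(\zeta)\dd\zeta$ for all $\xi\in[a,b]$, exhibiting $y$ on $[a,b]$ as the indefinite integral of an $\Ls^1$-function. The standard result that indefinite integrals of $\Ls^1$-functions are absolutely continuous then gives the claim; concretely, absolute continuity follows from the absolute continuity of the Lebesgue integral, i.e.\ for every $\epsilon>0$ there is $\delta>0$ so that $\sum_k(\xi_k-\eta_k)<\delta$ forces $\sum_k|y(\xi_k)-y(\eta_k)|\leqslant\sum_k\int_{\eta_k}^{\xi_k}|g|\dd\zeta<\epsilon$. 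Since $[a,b]$ was arbitrary, $y\in\AC(I)$.

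The statement is essentially a bookkeeping repackaging of the textbook characterization of absolute continuity on compact intervals, so I do not expect a genuine mathematical obstacle; the only point requiring care is the quantifier management in passing between the local definition and the global conditions. The single subtlety worth flagging is condition (iii) for unbounded or half-open $I$: I must ensure that any two points can be enclosed in one compact subinterval before applying the single-interval fundamental theorem, which is immediate from the interval (hence convex) structure of $I$. Everything else is the direct application of the classical one-interval theorem, which I would cite rather than reprove.
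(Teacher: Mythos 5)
Your proof is correct and takes essentially the same route as the paper, which simply cites the classical real-valued characterization (Leoni, Chapter 3) and notes that its arguments carry over; your write-up just makes the localization bookkeeping explicit. The only point you gloss over is that $y$ is complex-valued, so strictly speaking the classical single-interval theorem should be applied to $\operatorname{Re} y$ and $\operatorname{Im} y$ separately --- a triviality, and exactly the remark the paper's proof makes.
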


\begin{proof} It is enough to repeat the arguments of the real-valued case, see, e.g., Leoni \cite[Chapter 3]{GL}.
\end{proof}

\smallskip

Now we can state the following analogue of \eqref{EQ-D} for weighted $\Ls^2$-spaces over possibly unbounded intervals.

\smallskip

\begin{lem}\label{LEM-DIS} Let $I\subseteq\mathbb{R}$ be an interval and let $w\colon I\rightarrow(0,\infty)$ be continuous. For $x\in \Ls^2_w(I)$ the following are equivalent.
\begin{compactitem}\vspace{3pt}
\item[(i)] We have $wx\in \AC(I)$ and $(wx)'\in \Ls^2_w(I)$ in the sense of a derivative almost everywhere.\vspace{3pt}
\item[(ii)] We have $(wx)'\in \Ls^2_w(I)$ in the sense of a distributional derivative.
\end{compactitem}
\end{lem}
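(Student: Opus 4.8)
The plan is to prove the equivalence of the two notions of ``$(wx)'\in\Ls^2_w(I)$'' by reducing everything to the classical, unweighted statement already recorded in \eqref{EQ-D} and Lemma \ref{AC-LEM}. The key observation is that both conditions concern the single function $u:=wx$, so the weight only enters through two places: the membership relations $x\in\Ls^2_w(I)$ and $(wx)'\in\Ls^2_w(I)$. Since $w$ is continuous and strictly positive, on every compact subinterval $[a,b]\subseteq I$ we have $0<\inf_{[a,b]}w\leqslant\sup_{[a,b]}w<\infty$, so $\Ls^2_w(a,b)=\Ls^2(a,b)$ as sets (this is exactly the local equivalence of norms noted before Lemma \ref{AC-LEM}). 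The strategy is therefore to localize: work on an arbitrary $[a,b]\subseteq I$, apply the classical theory there, and then patch the local conclusions back together.

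First I would treat the implication (i)$\Rightarrow$(ii). Assuming $u=wx\in\AC(I)$ with pointwise-a.e.\ derivative $u'\in\Ls^2_w(I)$, I would show $u'$ is also the distributional derivative. This is a direct consequence of Lemma \ref{AC-LEM}(iii): the fundamental theorem of calculus $u(\xi)-u(\eta)=\int_\eta^\xi u'(\zeta)\dd\zeta$ is precisely what lets one integrate by parts against any test function $\varphi\in\operatorname{C}_c^\infty(I)$ and conclude $\int_I u\varphi' = -\int_I u'\varphi$, so the a.e.\ derivative represents the distributional one. Hence the distributional derivative is the regular distribution $u'$, and it lies in $\Ls^2_w(I)$ by hypothesis.

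Next I would handle the harder direction (ii)$\Rightarrow$(i). Here we are given that the distributional derivative of $u=wx$ is a regular distribution, say $g\in\Ls^1_{\operatorname{loc}}(I)$ with $g\in\Ls^2_w(I)$, and we must produce the absolute-continuity and pointwise-derivative statement. The idea is to restrict to each compact $[a,b]\subseteq I$: there $g\in\Ls^1_{\operatorname{loc}}(a,b)$ and $u\in\Ls^1_{\operatorname{loc}}(a,b)$ with distributional derivative $g$, so by the classical one-dimensional theory $u$ agrees a.e.\ with an absolutely continuous representative on $[a,b]$ whose a.e.\ derivative equals $g$. Running this over an exhausting sequence of compact intervals and using uniqueness of the representatives on overlaps, one glues them into a single $\tilde u\in\AC(I)$ with $\tilde u'=g$ a.e.; replacing $u$ by $\tilde u$ (they agree a.e., hence define the same element of $\Ls^2_w(I)$) gives $wx\in\AC(I)$ with $(wx)'=g\in\Ls^2_w(I)$ a.e., which is (i). The main obstacle is this gluing step: one must verify that the locally defined absolutely continuous representatives are mutually consistent on overlapping intervals and therefore assemble into a genuinely locally absolutely continuous function on all of $I$. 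This is routine but requires invoking that two $\Ls^1_{\operatorname{loc}}$-functions with the same distributional derivative on a connected set differ by a constant, so that the continuous representatives can be normalized to agree. Once that is in hand, the passage between the weighted and unweighted $\Ls^2$ norms on compacta is immaterial, because all membership assertions in (i) and (ii) are local and the weight is locally bounded above and below away from zero.
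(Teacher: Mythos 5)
Your proposal is correct and follows essentially the same route as the paper's proof: the paper likewise disposes of (i)$\Rightarrow$(ii) by noting that the a.e.\ derivative of a locally absolutely continuous function coincides with its distributional derivative, and proves (ii)$\Rightarrow$(i) by localizing to bounded subintervals $J_n=(-n,n)\cap I$ (where the weight is bounded above and below, so weighted and unweighted $\Ls^2$ coincide), invoking the classical Sobolev-space result (Brezis, Theorem 8.2) to get absolutely continuous representatives, and gluing them by uniqueness of continuous representatives. Your use of compact exhausting intervals and the difference-by-a-constant normalization is an inessential variant of the paper's gluing step.
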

\begin{proof}Firstly we notice that a function $x\in \Ls^2_w(I)$ is in $\AC(I)$ if and only if it is in $\AC(\hspace{0.3pt}{\displaystyle\mathop{I}^{\circ}}\hspace{0.5pt})$. Thus, we may assume that $I$ is open and use the common textbook definition of $\mathcal{D}'(I)$, see, e.g., Folland \cite{Folland}.

\smallskip

(i)\,$\Rightarrow$\,(ii)~If $wx$ is locally absolutely continuous, then its almost everywhere defined derivative coincides with the distributional derivative. This shows that the latter is a regular distribution which then belongs to $\Ls^2_{w}(I)$ by assumption.

\smallskip

(ii)\,$\Rightarrow$\,(i)~Let $x\in \Ls^2_{w}(I)$ be given and assume that $(wx)'\in \Ls^1_{\text{loc}}(I)\subseteq\mathcal{D}'(I)$ even belongs to $\Ls^2_{w}(I)$. We need to show that $wx$ has a locally absolutely continuous representative $y$. We put $J_n:=(-n,n)\cap I$ for $n\geqslant0$. Then $wx|_{J_n}$ and $(wx)'|_{J_n}$ belong to $\Ls^2(J_n)$ and thus to $\operatorname{H}^2(J_n)$ which allows to select a uniquely determined locally absolutely continuous representative $y_n\colon J_n\rightarrow\mathbb{C}$ of $wx|_{J_n}$, see, e.g., Brezis \cite[Theorem 8.2 and Remark 5 on p.~204]{B}. We can therefore define $y\colon I\rightarrow\mathbb{C}$ via $y(\xi)=y_n(\xi)$ for $\xi\in J_n$ which is well-defined, belongs to $\AC(I)$, coincides with $wx$ in $\Ls^1_{\text{loc}}$, and thus belongs to $\Ls^2_{w}(I)$.
\end{proof}

\smallskip

The next lemma will be crucial in the proofs of all generation results that we will discuss below. Observe that in the unweighted case, i.e., $w\equiv 1$, the classical Barb{\u a}lat lemma, see, e.g., Farkas, Wegner \cite[Theorem 5]{FW}, shows that
$$
\lim_{\xi\rightarrow\pm\infty}x(\xi)=\lim_{\xi\rightarrow\pm\infty}(wx)(\xi)=0
$$ 
holds under the assumptions of Lemma \ref{BAR-LEM}. In the case of an arbitrary weight $w$ we get at least that $wx$ is bounded. If $w$ is bounded, we recover also that the two limits are zero. The proof is an adaption of \cite[Lemma 6]{FW} and Tao \cite{Tao}.

\smallskip

\begin{lem}\label{BAR-LEM} Let $x\in \Ls^2_{w}(\mathbb{R})$ be given such that $w{}x\in \AC(\mathbb{R})$ and $(w{}x)'\in \Ls^2_{w}(\mathbb{R})$. Then $w{}x$ is bounded. If $w$ is bounded, then $wx$ vanishes at infinity.
\end{lem}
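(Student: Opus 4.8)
The plan is to reduce everything to the scalar function $u:=wx$ and to exploit that the two hypotheses translate into two integrability statements. Indeed, $x\in\Ls^2_w(\mathbb{R})$ means $\int_{\mathbb{R}}|u(\xi)|^2/w(\xi)\dd\xi=\int_{\mathbb{R}}w(\xi)|x(\xi)|^2\dd\xi<\infty$, while $(wx)'\in\Ls^2_w(\mathbb{R})$ means $\int_{\mathbb{R}}w(\xi)|u'(\xi)|^2\dd\xi<\infty$. Since $u\in\AC(\mathbb{R})$ by assumption, the function $|u|^2=u\bar{u}$ is again locally absolutely continuous with $(|u|^2)'=2\Re(\bar{u}u')$ almost everywhere, and Lemma \ref{AC-LEM} provides the fundamental theorem of calculus for $|u|^2$ on every compact subinterval.

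The crucial step, and the only point where the weight really enters, is to observe that $|u|^2$ has an integrable derivative over all of $\mathbb{R}$. This follows by splitting the weight symmetrically: writing $2|u||u'|=2(|u|/\sqrt{w})(\sqrt{w}|u'|)$ and applying the elementary inequality $2ab\leqslant a^2+b^2$, one obtains the pointwise bound
$$
\bigl|(|u|^2)'\bigr|\leqslant2|u||u'|\leqslant\frac{|u|^2}{w}+w|u'|^2.
$$
By the two integrability statements above, the right-hand side is integrable over $\mathbb{R}$, so $(|u|^2)'\in\Ls^1(\mathbb{R})$. Consequently $\xi\mapsto|u(\xi)|^2=|u(0)|^2+\int_0^{\xi}(|u|^2)'(\zeta)\dd\zeta$ converges to finite limits as $\xi\rightarrow\pm\infty$; in particular $|u|^2$, and hence $u=wx$, is bounded. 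This settles the first assertion.

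For the second assertion, assume in addition that $w$ is bounded, say $w\leqslant M$. Then $1/w\geqslant1/M$ yields $\int_{\mathbb{R}}|u|^2\leqslant M\int_{\mathbb{R}}|u|^2/w<\infty$, so that $u\in\Ls^2(\mathbb{R})$ in the unweighted sense. A square-integrable function possessing a limit at $+\infty$ (resp.\ $-\infty$) must have that limit equal to zero, for otherwise $|u|^2$ would be bounded below by a positive constant on a half-line and fail to be integrable there. Since we already know from the first part that $\lim_{\xi\rightarrow\pm\infty}|u(\xi)|^2$ exist, both limits are therefore $0$, i.e.\ $wx$ vanishes at infinity.

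I expect no serious obstacle beyond careful bookkeeping: the genuine content is the weighted split in the displayed inequality, which is exactly what compensates for the absence of any a priori comparison between $\|\cdot\|_{\Ls^2_w(\mathbb{R})}$ and $\|\cdot\|_{\Ls^2(\mathbb{R})}$. The minor technical points to verify are that $|u|^2$ is indeed locally absolutely continuous with the stated derivative (so that the fundamental theorem of calculus from Lemma \ref{AC-LEM} applies, using that $u$ is continuous and hence locally bounded) and that the $\Ls^1$-derivative argument correctly produces the two one-sided limits.
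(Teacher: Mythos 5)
Your proof is correct and takes essentially the same route as the paper: both reduce to the fundamental theorem of calculus for $|wx|^2$ and control the integral of $2\Re\bigl(\overline{wx}\,(wx)'\bigr)$ by splitting the weight as $w^{1/2}\cdot w^{-1/2}$, the only (cosmetic) difference being that you use the pointwise inequality $2ab\leqslant a^2+b^2$ where the paper applies H\"older's inequality in integral form. Your second step (bounded $w$ gives $wx\in\Ls^2(\mathbb{R})$, so the already-established limits at $\pm\infty$ must vanish) is exactly the paper's argument.
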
 
\begin{proof}1.~With $w{}x$ also $|w{}x|^2$ belongs to $\AC(\mathbb{R})$. Therefore we get that
$$
|(w{}x)(\xi)|^2-|(w{}x)(\eta)|^2 = \int_{\eta}^{\xi}\frac{\dd}{\dd\zeta}|(w{}x)(\zeta)|^2\dd\zeta
$$
holds for all $\xi,\,\eta\in\mathbb{R}$. We compute $\frac{\dd}{\dd\zeta}|(w{}x)(\zeta)|^2 = (wx)'(\zeta)\overline{wx}(\zeta)+\overline{wx}'(\zeta)(wx)(\zeta)= 2\Re(\overline{w{}x}(\zeta)(w{}x)'(\zeta))$ and get the estimate 
\begin{eqnarray*}
|(w{}x)(\xi)|^2 & \leqslant &  |(w{}x)(0)|^2+\Bigl|\int_0^{\xi} 2\Re(\overline{w{}x}(\zeta)(w{}x)'(\zeta))\dd\zeta\Bigr|\\
& \leqslant & |(w{}x)(0)|^2+2\int_0^{\xi}|\langle{}(w{}^{1/2}x)(\zeta),(w^{1/2}(w{}x)')(\zeta)\rangle{}_{\mathbb{C}}|\dd\zeta\\
& \leqslant & |(w{}x)(0)|^2+2\,\Bigl(\int_0^{\xi}|(w{}^{1/2}x)(\zeta)|^2\dd\xi\Bigr)^{1/2}\,\Bigl(\int_0^{\xi}|(w{}^{1/2}(w{}x)')(\zeta)|^2\dd\xi\Bigr)^{1/2}\\
& \leqslant & |(w{}x)(0)|^2+2\|x\|_{\Ls^2_{w}(\mathbb{R})}\|(w{}x)'\|_{\Ls^2_{w}(\mathbb{R})}\phantom{\int}
\end{eqnarray*}
for all $\xi\geqslant0$ where we used H\"older's inequality for $p=q=2$. Similarly, we get $|(w{}x)(\eta)|^2\leqslant|(w{}x)(0)|^2+2\|x\|_{\Ls^2_{w}(\mathbb{R})}\|(w{}x)'\|_{\Ls^2_{w}(\mathbb{R})}$ for $\eta\leqslant0$, which shows that $w{}x$ is bounded.

\smallskip

2.~Our estimates above show that $\lim_{\xi\rightarrow\infty}|(wx)^2(\xi)|$ and $\lim_{\xi\rightarrow-\infty}|(wx)^2(\xi)|$ exist. But in view of
$$
\int_{\mathbb{R}}|(wx)(\xi)|^2\dd\xi\leqslant \sup_{\xi\in\mathbb{R}}w(\xi)\int_{\mathbb{R}}w(\xi)|x(\xi)|^2\dd\xi
$$
we have that $wx\in \Ls^2(\mathbb{R})$ holds. Consequently, the two limits above need to be zero.
\end{proof}

\smallskip

The proofs of the two basic generation results that we will prove in Section \ref{SEC:GEN} are based on an explicit formula for the generated semigroup. In order to prove the corresponding results we need the following technical lemma.

\smallskip

\begin{lem}\label{LEM-1} Let $w\colon\mathbb{R}\rightarrow\mathbb{R}$ be a continuous function with $\frac{1}{w}\cdot\mathbb{1}_{(-\infty,0)},\,\frac{1}{w}\cdot\mathbb{1}_{(0,\infty)}\not\in \Ls^1(\mathbb{R})$ and without zeros. We define the two auxiliary functions\vspace{-10pt}
\begin{eqnarray*}
p_w\colon\mathbb{R}\rightarrow\mathbb{R},&&\hspace{-15pt}p_w(\xi):=\int_0^{\xi}w(\zeta)^{-1}\dd\zeta\\
\mu_w\colon\mathbb{R}\times\mathbb{R}\rightarrow\mathbb{R},&&\hspace{-15pt}\mu_w(\xi,t):=p_w^{-1}(p_w(\xi)+t)-\xi
\end{eqnarray*}
where $\mu_w$ is well-defined as $p_w$ is bijective. Let $w$ be positive. Then the following is true.\vspace{4pt}
\begin{compactitem}
\item[(i)] The maps $p_w$ and $p_w^{-1}$ are strictly increasing. We have
$$
p_w|_{(-\infty,0)},\,p_w^{-1}|_{(-\infty,0)}<0,\;\;p_w|_{(0,\infty)},\,p_w^{-1}|_{(0,\infty)}>0,\;\;\lim_{\xi\rightarrow\pm\infty}p_w(\xi)=\pm\infty \; \text{ and } \lim_{\xi\rightarrow\pm\infty}p_w^{-1}(\xi)=\pm\infty.
$$
\item[(ii)] The maps $p_w$ and $p_w^{-1}$ are continuously differentiable with
$$
p_w'(\xi)=1/w(\xi)\;\text{ and } (p^{-1}_w)'(\xi)=w(p_w^{-1}(\xi)).
$$
\item[(iii)] We have $\mu_w(\xi,t)\not=0$ for every $\xi\in\mathbb{R}$ and $t>0$.

\vspace{3pt}

\item[(iv)] We have $\mu_w(\xi,0)=0$ and $\mu_w(0,t)=p^{-1}(t)$ for every $\xi\in\mathbb{R}$ and $t\geqslant0$.

\vspace{3pt}

\item[(v)] We have $t=p(\xi+\mu_w(\xi,t))-p(\xi)$ for every $\xi\in\mathbb{R}$ and $t\geqslant0$.

\vspace{3pt}

\item[(vi)] We have $\mu_w(\xi,t)+\mu_w(\xi+\mu_w(\xi,t),s)=\mu_w(\xi,s+t)$ for $\xi\in\mathbb{R}$ and $s,t\geqslant0$.

\vspace{3pt}

\item[(vii)] We have $\mu_w(\cdot,t)\rightarrow0$ uniformly on compact subsets of $\mathbb{R}$ for $t\searrow0$.

\vspace{3pt}

\item[(viii)] For every $\xi\in\mathbb{R}$ we have ${\displaystyle\lim_{t\searrow0}}\frac{\mu_w(\xi,t)}{t}=w(\xi)$.

\vspace{1pt}

\item[(ix)] The function $\mu_w|_{\mathbb{R}\times[0,\infty)}$ is partially continuously differentiable with
$$
\frac{\partial}{\partial\xi}\,\mu_w(\xi,t) = \frac{w(\xi+\mu_w(\xi,t))}{w(\xi)}-1 \;\text{ and } \frac{\partial}{\partial t}\,\mu_w(\xi,t) = w(\xi+\mu_w(\xi,t)).
$$
\item[(x)] We have $\mu_{-w}(\xi,t)=\mu_w(\xi,-t)$ for all $\xi$, $t\in\mathbb{R}$.
\end{compactitem}
\end{lem}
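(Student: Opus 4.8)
The plan is to derive all ten statements from a single structural fact established in (i) and (ii): since $w>0$ is continuous and without zeros, the integrand $1/w$ in the definition of $p_w$ is continuous and strictly positive, which will make $p_w\colon\mathbb{R}\to\mathbb{R}$ a $C^1$-diffeomorphism. First I would prove (i). Strict monotonicity of $p_w$ is immediate from $p_w'=1/w>0$, and the sign statements follow by splitting the defining integral at $0$. For the limits I would invoke the hypothesis $\frac{1}{w}\cdot\mathbb{1}_{(0,\infty)},\,\frac{1}{w}\cdot\mathbb{1}_{(-\infty,0)}\notin\Ls^1(\mathbb{R})$, which forces $\int_0^{\infty}1/w=+\infty$ and $\int_{-\infty}^0 1/w=-\infty$, hence $\lim_{\xi\to\pm\infty}p_w(\xi)=\pm\infty$. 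Combined with continuity and strict monotonicity this makes $p_w$ a homeomorphism of $\mathbb{R}$, so $p_w^{-1}$ inherits strict monotonicity, the sign behaviour, and the limits by reflecting across the diagonal. The formula $p_w'=1/w$ in (ii) is the fundamental theorem of calculus applied to the continuous integrand, and continuity of $p_w'$ makes $p_w$ a $C^1$-diffeomorphism; the inverse function theorem then yields $(p_w^{-1})'(\xi)=1/p_w'(p_w^{-1}(\xi))=w(p_w^{-1}(\xi))$.

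With (i) and (ii) in place, the algebraic identities (iii)--(vi) are routine substitutions using the defining relation $\xi+\mu_w(\xi,t)=p_w^{-1}(p_w(\xi)+t)$, equivalently $p_w(\xi+\mu_w(\xi,t))=p_w(\xi)+t$. For (iii) and (iv) I would note that $\mu_w(\xi,t)=0$ is equivalent to $p_w(\xi)+t=p_w(\xi)$, i.e.\ $t=0$, by injectivity of $p_w$; evaluating at $\xi=0$ and using $p_w(0)=0$ gives $\mu_w(0,t)=p_w^{-1}(t)$. Identity (v) is the defining relation rearranged, and the cocycle identity (vi) follows by setting $\eta:=\xi+\mu_w(\xi,t)=p_w^{-1}(p_w(\xi)+t)$ and computing $\mu_w(\eta,s)=p_w^{-1}(p_w(\xi)+t+s)-\eta$, so that $\mu_w(\xi,t)+\mu_w(\eta,s)=p_w^{-1}(p_w(\xi)+s+t)-\xi=\mu_w(\xi,s+t)$.

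For the differential statements I would argue as follows. Statement (viii) is the difference quotient $\frac{\mu_w(\xi,t)}{t}=\frac{p_w^{-1}(p_w(\xi)+t)-p_w^{-1}(p_w(\xi))}{t}$, whose limit as $t\searrow0$ is $(p_w^{-1})'(p_w(\xi))=w(\xi)$ by (ii). The partial derivatives in (ix) are obtained by differentiating $\mu_w(\xi,t)=p_w^{-1}(p_w(\xi)+t)-\xi$ through the chain rule and (ii): the $t$-derivative is $(p_w^{-1})'(p_w(\xi)+t)=w(\xi+\mu_w(\xi,t))$, and the $\xi$-derivative is $(p_w^{-1})'(p_w(\xi)+t)\,p_w'(\xi)-1=\frac{w(\xi+\mu_w(\xi,t))}{w(\xi)}-1$, while continuity of both expressions follows from continuity of $w$, $p_w$ and $p_w^{-1}$. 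Finally (x) rests on the elementary identity $p_{-w}=-p_w$ (the integrand merely changes sign), which gives $p_{-w}^{-1}(s)=p_w^{-1}(-s)$ and hence $\mu_{-w}(\xi,t)=p_w^{-1}(p_w(\xi)-t)-\xi=\mu_w(\xi,-t)$; here I only need that $p_{-w}$ remains bijective, which holds because $|1/(-w)|=|1/w|$ so the hypotheses are symmetric in the sign of $w$.

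The step requiring the most care is the uniform convergence in (vii). Pointwise convergence $\mu_w(\xi,t)\to0$ as $t\searrow0$ is clear from continuity of $p_w^{-1}$, but for uniformity on a compact set $K$ I would use that $p_w(K)$ lies in a compact interval $[a,b]$, so that $p_w(\xi)+t$ ranges in $[a,b+1]$ for $t\in[0,1]$; since $p_w^{-1}$ is uniformly continuous on $[a,b+1]$, given $\epsilon>0$ there is $\delta\in(0,1)$ with $|p_w^{-1}(s+t)-p_w^{-1}(s)|<\epsilon$ for all $s\in[a,b]$ and $0\leqslant t<\delta$, which is exactly $|\mu_w(\xi,t)|<\epsilon$ for all $\xi\in K$. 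This is the one place where the estimate must be made uniform in $\xi$, and it is the main, though still modest, obstacle of the proof.
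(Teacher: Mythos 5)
Your proposal is correct, and it follows the same overall skeleton as the paper's proof: (i) and (ii) are established first (the paper in fact dismisses (i)--(v) as ``an easy computation''), the algebraic identities (iii)--(vi) are read off from the defining relation $p_w(\xi+\mu_w(\xi,t))=p_w(\xi)+t$, (vii) is proved via uniform continuity of $p_w^{-1}$ on a compact interval of the form $[p_w(-k),p_w(k)+1]$ exactly as you do, and (ix), (x) are handled identically (chain rule plus (ii), and the identity $p_{-w}^{-1}(s)=p_w^{-1}(-s)$). The one place where you genuinely diverge is (viii). The paper does \emph{not} deduce it from (ii): it uses (v) to write $t=\int_{\xi}^{\xi+\mu_w(\xi,t)}w(\zeta)^{-1}\,\dd\zeta$, invokes (iii) to know $\mu_w(\xi,t)\neq0$ so one may divide, and (vii) to let the averaging interval shrink, obtaining $\lim_{t\searrow0}t/\mu_w(\xi,t)=w(\xi)^{-1}$ as a mean-value limit of the continuous function $w^{-1}$, and finally takes reciprocals. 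You instead observe that $\mu_w(\xi,t)/t$ is precisely the difference quotient of $p_w^{-1}$ at the point $p_w(\xi)$, so (viii) is an immediate one-line consequence of the differentiability statement (ii). Your route is shorter and eliminates the dependence of (viii) on (iii) and (vii); what the paper's integral-average argument buys is independence from the inverse function theorem in that step, but since (ii) is part of the lemma anyway, nothing is lost by your simplification. One cosmetic slip worth fixing: in (i) you write $\int_{-\infty}^0 1/w=-\infty$, but since $1/w>0$ that integral equals $+\infty$; what you mean, and what the argument actually needs, is $p_w(\xi)=-\int_{\xi}^{0}w(\zeta)^{-1}\,\dd\zeta\rightarrow-\infty$ as $\xi\rightarrow-\infty$. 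This does not affect the validity of the argument.
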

\begin{proof} In the proof we write $p:=p_w$ and $\mu:=\mu_w$ to simplify the notation.
\smallskip
\\(i)\,--\,(v) This is an easy computation.
\smallskip
\\(vi) Applying (v) three times allows to compute
\begin{eqnarray*}
p(\xi+\mu(\xi,s+t))-p(\xi) & = & s+t\\
& = & p(\xi+\mu(\xi,t)+\mu(\xi+\mu(\xi,t),s)) - p(\xi+\mu(\xi,t)) + p(\xi+\mu(\xi,t))- p(\xi)\\
& = & p(\xi+\mu(\xi,t)+\mu(\xi+\mu(\xi,t),s))- p(\xi) 
\end{eqnarray*}
which yields the desired equality by adding $p(\xi)$ on both sides and using that $p$ is injective.
\smallskip
\\(vii) We have to establish that
$$
\forall\:k>0,\,\epsilon>0\;\exists\:t_0>0\;\forall\: t\in[0,t_0],\,\xi\in[-k,k]\colon |\mu(\xi,t)|<\epsilon
$$
holds. Let $k>0$ and $\epsilon>0$ be given. Since $p^{-1}\colon[p(-k),p(k)+1]\rightarrow\mathbb{R}$ is uniformly continuous we find $\delta>0$ such that $|p^{-1}(\zeta_1)-p^{-1}(\zeta_2)|<\epsilon$ holds for all $\zeta_1,\zeta_2\in[p(-k),p(k)+1]$ with $|\zeta_1-\zeta_2|<\delta$. We put $t_0:=\min\{\delta,1\}$. Let $t\in[0,t_0]$ and $\xi\in[-k,k]$ be given. Then $\zeta_2:=p(\xi)\in[p(-k),p(k)]\subseteq[p(-k),p(k)+1]$ and thus $\zeta_1:=p(\xi)+t\in[p(-k),p(k)+1]$ and we have $|\zeta_1-\zeta_2|=t\leqslant t_0\leqslant\delta$. Therefore
$$
|\mu(\xi,t)|=|p^{-1}(p(\xi)+t)-\xi| = |p^{-1}(p(\xi)+t)-p^{-1}(p(\xi))|=|p^{-1}(\zeta_1)-p^{-1}(\zeta_2)|<\epsilon
$$
is valid as desired.
\smallskip
\\(viii) We fix $\xi\in\mathbb{R}$ and we use (v) and the definition of $p$ to compute
$$
t = p(\xi+\mu(\xi,t))-p(\xi) = \int_0^{\xi+\mu(\xi,t)}w(\zeta)^{-1}\dd\zeta - \int_0^{\xi}w(\zeta)^{-1}\dd\zeta = \int_{\xi}^{\xi+\mu(\xi,t)}w(\zeta)^{-1}\dd\zeta.
$$
For $t>0$ we know by (iii) that $\mu(\xi,t)\not=0$ and we know that $w(\xi)>0$. Therefore, we can use the above to get
$$
\lim_{t\searrow0}\frac{t}{\mu(\xi,t)}=\lim_{t\searrow0}\frac{1}{\mu(\xi,t)}\int_{\xi}^{\xi+\mu(\xi,t)}w(\zeta)^{-1}\dd\zeta=w(\xi)^{-1}
$$
since $\lim_{t\searrow0}\mu(\xi,t)=0$ holds by (vii). The desired statement $\lim_{t\searrow0}\frac{\mu(\xi,t)}{t}=w(\xi)$ follows by taking reciprocals.
\smallskip
\\(ix) Let $t\geqslant0$ be fixed. Using (ii) we compute
\begin{eqnarray*}
\frac{\dd}{\dd\xi}\,\mu(\xi,t) &=& \frac{\dd}{\dd\xi}\,\bigl[p^{-1}(p(\xi)+t)-\xi\bigr]=(p^{-1})'(p(\xi)+t)\cdot\frac{\dd}{\dd\xi}(p(\xi)+t)-1\\
& = & w(p^{-1}(p(\xi)+t))\cdot p'(\xi)-1 = \frac{w(\xi+\mu(\xi,t))}{w(\xi)}-1
\end{eqnarray*}
which is by the above a continuous function. On the other hand, for fixed $\xi\in\mathbb{R}$ we have
\begin{equation*}
\frac{\dd}{\dd t}\,\mu(\xi,t) = \frac{\dd}{\dd t}\,\bigl[p^{-1}(p(\xi)+t)-\xi\bigr] = (p^{-1})'(p(\xi)+t)\cdot1=w(\xi+\mu(\xi,t))
\end{equation*}
which is also a continuous function.
\smallskip
\\(x) Let $\xi$ and $t\in\mathbb{R}$. Put $\eta:=p_w^{-1}(-\xi)$. Then $\xi=-p_w(\eta)=p_{-w}(\eta)$ and thus we established $p_{-w}^{-1}(\xi)=\eta=p_w^{-1}(-\xi)$ for every $\xi\in\mathbb{R}$. Now we can compute
$$
\mu_{-w}(\xi,t) = p_{-w}^{-1}(p_{-w}(\xi)+t)-\xi = p_{w}^{-1}(-p_{-w}(\xi)-t)-\xi= p_{w}^{-1}(p_{w}(\xi)-t)-\xi = \mu_w(\xi,-t)
$$
which finishes the proof.
\end{proof}


\section{Boundary control systems and the weiss theorem}\label{INTERLUDE}

In order to make this article as self-contained as possible we summarize below several notions from systems theory and formulate a version of the Weiss theorem \cite{Weiss} that will turn out crucial for our purposes in the next section. We will follow closely the approach given in \cite[Chapter 11-13]{JZ}.

\smallskip

We point out that it is not feasible to give a detailed introduction to systems theory at this point. For this we refer the reader, e.g., to Tucsnak, Weiss \cite{TW2014} and Staffans \cite{Staffans2005}. For a survey on the transfer function we refer to Zwart \cite{ZwartTrans}.

\smallskip

\begin{dfn}(\cite[Assumption 13.1.2]{JZ})\label{JZ-ASS} We say that
$$
(\Sigma)\;\;\begin{cases}
\;\dot{x}(t)=\mathcal{A}x(t)\\
\;u(t)=\mathcal{B}x(t)\\
\;y(t)=\mathcal{C}x(t)
\end{cases}
$$
is a \emph{boundary control system} if the following holds. \vspace{7pt}

\begin{compactitem}
\item[(i)] $\mathcal{A}\colon D(\mathcal{A})\subseteq X\rightarrow X$, $\mathcal{B}\colon D(\mathcal{B})\subseteq X\rightarrow U$ and $\mathcal{C}\colon D(\mathcal{A})\subseteq X\rightarrow Y$ are linear operators, $D(\mathcal{A})\subseteq D(\mathcal{B})$ holds and $X$, $U$ and $Y$ are Hilbert spaces.\vspace{3pt}

\item[(ii)] The operator $A\colon D(A)\rightarrow X$ with $D(A)=D(\mathcal{A})\cap\operatorname{ker}(\mathcal{B})$ and $Ax=\mathcal{A}x$ for $x\in D(A)$ generates a $\Cnull$-semigroup on $X$.\vspace{3pt}

\item[(iii)] There exists a bounded linear operator $B\colon U\rightarrow X$ such that for all $u\in U$ we have $Bu\in D(\mathcal{A})$, $\mathcal{A}B\colon U\rightarrow X$ is bounded and $\mathcal{B}Bu=u$ holds for $u\in U$.\vspace{3pt}

\item[(iv)] The operator $\mathcal{C}\colon D(A)\rightarrow Y$ is bounded with respect to the graph norm on $D(A)$.\vspace{3pt}
\end{compactitem}
\end{dfn}

\begin{dfn}(\cite[Definition 13.1.3]{JZ})\label{JZ-WP} We say that the boundary control system $(\Sigma)$ is \textit{well-posed} if for every $\tau>0$ there exists $m_{\tau}>0$ such that for every $x_0\in D(\mathcal{A})$ and every $u\in\operatorname{C}^2([0,\tau],U)$ with $u(0)=\mathcal{B}x_0$ the unique classical solution $x$ satisfies 
\begin{equation}\label{DFN-WP}
\|x(\tau)\|_{X}^2+\int_0^{\tau}\|y(t)\|^2\dd t\leqslant m_{\tau}\Big(\|x_0\|_{X}^2+\int_0^{\tau}\|u(t)\|^2\dd t\Big).
\end{equation}
\end{dfn}

In addition to the definition, we state the following very useful test for well-posedness.

\begin{prop}\label{PROP-WP}(\cite[Proposition 13.1.4]{JZ}) Let $(\Sigma)$ be a boundary control system. If every classical solution of the system satisfies
$$
\frac{\operatorname{d}}{\operatorname{d}x}\|x(t)\|^2=\|u(t)\|^2-\|y(t)\|^2
$$
then the system is well-posed.\hfill\qed
\end{prop}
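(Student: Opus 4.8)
The plan is to integrate the assumed power balance identity over $[0,\tau]$ and to observe that it yields the well-posedness inequality \eqref{DFN-WP} directly, in fact with the optimal constant $m_\tau=1$. First I would unwind Definition \ref{JZ-WP}: I must produce, for each $\tau>0$, a constant $m_\tau>0$ such that \eqref{DFN-WP} holds for every admissible pair, that is, for every $x_0\in D(\mathcal{A})$ and every $u\in\operatorname{C}^2([0,\tau],U)$ with $u(0)=\mathcal{B}x_0$. For such data the boundary control system possesses a unique classical solution $x$; this existence and uniqueness is part of the standing framework for boundary control systems (Definition \ref{JZ-ASS}) and is exactly what makes the right-hand side of the power balance meaningful. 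In particular a classical solution is a continuously differentiable $X$-valued map, so the scalar function $t\mapsto\|x(t)\|_X^2=\langle x(t),x(t)\rangle_X$ is itself continuously differentiable on $[0,\tau]$.

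Next I would invoke the hypothesis: along every classical solution we are given $\frac{\dd}{\dd t}\|x(t)\|^2=\|u(t)\|^2-\|y(t)\|^2$. Since the left-hand side is the derivative of a $\operatorname{C}^1$ function, I would integrate from $0$ to $\tau$ by the fundamental theorem of calculus to obtain
\[
\|x(\tau)\|_X^2-\|x_0\|_X^2=\int_0^{\tau}\|u(t)\|^2\dd t-\int_0^{\tau}\|y(t)\|^2\dd t.
\]
Moving the two nonnegative integral terms to the appropriate sides gives
\[
\|x(\tau)\|_X^2+\int_0^{\tau}\|y(t)\|^2\dd t=\|x_0\|_X^2+\int_0^{\tau}\|u(t)\|^2\dd t,
\]
which is precisely \eqref{DFN-WP} with $m_\tau=1$ (indeed with equality, independently of $\tau$).

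There is essentially no hard step here; the content of the proposition is the observation that a power balance identity is already the well-posedness estimate in disguise. The only points requiring care are bookkeeping ones: that admissible data indeed produce a genuine classical solution, so that both the identity and its integral are meaningful, and that $t\mapsto\|x(t)\|_X^2$ is regular enough—continuous differentiability of the classical solution suffices—to license the fundamental theorem of calculus rather than a mere almost-everywhere statement. Once these are in place, the estimate holds uniformly in the data with constant $1$, and the proposition follows.
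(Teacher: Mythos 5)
Your proof is correct and is exactly the standard argument behind this proposition (which the paper itself does not prove but cites from \cite[Proposition 13.1.4]{JZ}): integrating the power balance identity over $[0,\tau]$ via the fundamental theorem of calculus, using that $t\mapsto\|x(t)\|_X^2$ is $\operatorname{C}^1$ for a classical solution, yields \eqref{DFN-WP} as an equality with $m_\tau=1$. Your side remarks are also the right ones to make, namely that existence and uniqueness of classical solutions for compatible data $x_0\in D(\mathcal{A})$, $u\in\operatorname{C}^2$ with $u(0)=\mathcal{B}x_0$ is part of the boundary control system framework of Definition \ref{JZ-ASS}, and that the stated hypothesis (with the evident reading $\frac{\dd}{\dd t}$ in place of the typo $\frac{\operatorname{d}}{\operatorname{d}x}$) presupposes the pointwise meaningfulness of $y(t)=\mathcal{C}x(t)$.
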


\begin{dfn}(\cite[Theorem 12.1.3]{JZ}) Given a boundary control system, then the \emph{transfer function} $G(s)$ for $s\in\rho(A)$ is given by
$$
G(s)=\mathcal{C}(s-A)^{-1}(\mathcal{A}B-sB)+\mathcal{C}B.
$$
\end{dfn}

In addition to the definition, we state the following very useful way to compute values of the transfer function.

\begin{prop} (\cite[Theorem 12.1.3]{JZ}) \label{JZ-PROP-2} For $s\in\rho(A)$ and $u_0\in U$, $G(s)u_0$ can be computed as the unique solution of 
\begin{equation}\label{DFN-TRANS}
\begin{array}{rcl}
sx_0&=&\mathcal{A}x_0\\
u_0&=&\mathcal{B}x_0\\
G(s)u_0&=&\mathcal{C}x_0
\end{array}
\end{equation}
with $x_0\in D(\mathcal{A})$.\hfill\qed
\end{prop}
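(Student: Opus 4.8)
The plan is to produce an explicit candidate for $x_0$ directly from the formula defining $G(s)$ and then verify that it solves the three equations in \eqref{DFN-TRANS}, after which uniqueness follows from $s\in\rho(A)$. Throughout I would exploit the two structural facts from Definition~\ref{JZ-ASS}: that $A$ is the restriction of $\mathcal{A}$ to $D(A)=D(\mathcal{A})\cap\operatorname{ker}(\mathcal{B})$, so that $\mathcal{A}$ and $A$ agree on $D(A)$ and in particular on the range of $(s-A)^{-1}$; and that the lifting $B\colon U\to X$ satisfies $Bu_0\in D(\mathcal{A})$, $\mathcal{A}B$ bounded, and $\mathcal{B}Bu_0=u_0$.

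Concretely, for $s\in\rho(A)$ and $u_0\in U$ I would set
$$
x_0:=Bu_0+(s-A)^{-1}(\mathcal{A}B-sB)u_0.
$$
Here $Bu_0\in D(\mathcal{A})$ and $(s-A)^{-1}(\mathcal{A}B-sB)u_0\in D(A)\subseteq D(\mathcal{A})$, so $x_0\in D(\mathcal{A})$. Applying $\mathcal{B}$ and using that the resolvent term lies in $\operatorname{ker}(\mathcal{B})$ together with $\mathcal{B}Bu_0=u_0$ gives at once $\mathcal{B}x_0=u_0$, the second equation. For the first equation I would split $(s-\mathcal{A})x_0=(s-\mathcal{A})Bu_0+(s-\mathcal{A})(s-A)^{-1}(\mathcal{A}B-sB)u_0$ and observe that on the resolvent term, which sits in $D(A)$, the operator $\mathcal{A}$ may be replaced by $A$, so this term reduces to $(\mathcal{A}B-sB)u_0$, while the first term equals $-(\mathcal{A}B-sB)u_0$; the two cancel and hence $sx_0=\mathcal{A}x_0$. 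Finally, applying $\mathcal{C}$ reproduces exactly $\mathcal{C}Bu_0+\mathcal{C}(s-A)^{-1}(\mathcal{A}B-sB)u_0=G(s)u_0$, which is the third equation.

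For uniqueness I would take two solutions $x_0,\tilde{x}_0\in D(\mathcal{A})$ of the first two equations and consider $z:=x_0-\tilde{x}_0$. Then $(s-\mathcal{A})z=0$ and $\mathcal{B}z=0$, so $z\in D(\mathcal{A})\cap\operatorname{ker}(\mathcal{B})=D(A)$; on $D(A)$ we have $(s-A)z=(s-\mathcal{A})z=0$, and since $s\in\rho(A)$ the operator $s-A$ is injective, forcing $z=0$. Hence $x_0$ is uniquely determined and its image $\mathcal{C}x_0$ equals $G(s)u_0$ unambiguously. I do not expect a genuine obstacle here: the argument is essentially bookkeeping, and the only point that needs care is the interchange of $\mathcal{A}$ and $A$ on the range of the resolvent, which must be justified by the defining identities $D(A)=D(\mathcal{A})\cap\operatorname{ker}(\mathcal{B})$ and $A=\mathcal{A}|_{D(A)}$ rather than taken for granted.
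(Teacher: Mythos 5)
Your proposal is correct, and it is complete: taking $x_0:=Bu_0+(s-A)^{-1}(\mathcal{A}B-sB)u_0$, checking the three equations via $A=\mathcal{A}|_{D(A)}$ on the range of the resolvent, and deducing uniqueness from $z\in D(\mathcal{A})\cap\operatorname{ker}(\mathcal{B})=D(A)$ together with injectivity of $s-A$ is exactly the right argument. Note, however, that the paper itself offers no proof of this statement at all -- it is imported verbatim from \cite[Theorem 12.1.3]{JZ} and closed with a \textup{qed} mark -- so there is no internal proof to compare against; your write-up in effect reconstructs the standard argument from the cited reference. The one point worth stating explicitly in a polished version is that uniqueness uses only the first two equations (so the triple determines $x_0$, and then $\mathcal{C}x_0$ is an unambiguous value), and that $\mathcal{B}z$ makes sense for $z\in D(\mathcal{A})$ because Definition \ref{JZ-ASS}(i) guarantees $D(\mathcal{A})\subseteq D(\mathcal{B})$; both are implicit in what you wrote.
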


\begin{thm} (Weiss \cite{Weiss}, see \cite[Theorem 12.1.3]{JZ})\label{WEISS-THM} Assume that the boundary control system $(\Sigma)$ is well-posed. Let $G$ denote its transfer function. Let $F$ be a bounded linear operator from $Y$ to $U$ and assume that the inverse of $I+G(s)F$ exists and is bounded for $s$ in some right half-plane. Then
$$
(\Sigma')\;\;\begin{cases}
\;\dot{x}(t)=\mathcal{A}x(t)\\
\;u(t)=(\mathcal{B}+F\mathcal{C})x(t)\\
\;y(t)=\mathcal{C}x(t)
\end{cases}
$$
is again a well-posed boundary control system.\hfill\qed
\end{thm}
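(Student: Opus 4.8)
The statement is the classical output–feedback invariance theorem for well-posed systems, so the plan is to pass from the boundary control system $(\Sigma)$ to its associated abstract well-posed linear system and to realize the perturbation $\mathcal{B}\mapsto\mathcal{B}+F\mathcal{C}$ as an admissible output feedback. Writing the new external input as $u=(\mathcal{B}+F\mathcal{C})x$ and keeping the open-loop quantities $\mathcal{B}x$ and $y=\mathcal{C}x$, one has $\mathcal{B}x=u-Fy$; thus $(\Sigma')$ is exactly the loop obtained from $(\Sigma)$ by feeding the output back through $F$ and adding it to $u$. First I would record that, since $(\Sigma)$ is well-posed, it generates a well-posed linear system in the sense of the theory referenced in Section~\ref{INTERLUDE}: the semigroup $\T$ generated by $A=\mathcal{A}|_{\ker\mathcal{B}}$, a bounded input map, a bounded output map $\Psi$, and a bounded, causal, shift-invariant input–output map $\mathbb{F}$ on $\Ls^2$ over finite horizons whose Laplace symbol is the transfer function $G$.

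Next I would solve the closed loop on $\Ls^2(0,\tau)$. Substituting $\mathcal{B}x=u-Fy$ into the open-loop output relation $y=\Psi x_0+\mathbb{F}(\mathcal{B}x)$ gives $(I+\mathbb{F}F)y=\Psi x_0+\mathbb{F}u$, so that the closed-loop output, and then the closed-loop state trajectory, are recovered by inverting $I+\mathbb{F}F$ and composing with the (already bounded) open-loop maps. Consequently, once $I+\mathbb{F}F$ is boundedly invertible on $\Ls^2([0,\tau],Y)$ for every $\tau>0$, every closed-loop map is a composition of bounded operators, the closed loop is again a well-posed linear system, and the estimate \eqref{DFN-WP} for $(\Sigma')$ follows with a constant $m_\tau$ built from the open-loop constant and $\|(I+\mathbb{F}F)^{-1}\|$.

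The heart of the argument, and the step I expect to be hardest, is the equivalence between the frequency-domain hypothesis and this time-domain invertibility: $I+\mathbb{F}F$ is a causal, shift-invariant perturbation of the identity whose symbol is precisely $I+G(s)F$, and the assumed existence and boundedness of $(I+G(s)F)^{-1}$ on a right half-plane $\{\Re s>\omega\}$ must be upgraded to boundedness of $(I+\mathbb{F}F)^{-1}$ as a causal operator on $\Ls^2$. This is a Paley--Wiener type statement: a function that is holomorphic and uniformly bounded together with its inverse on a right half-plane is the symbol of a boundedly invertible causal operator, so that $(I+G(s)F)^{-1}$ is itself the symbol of the inverse. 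I would prove this by an exponential-weight reduction (a shift in $s$) to the $H^\infty$ algebra on a half-plane, where invertibility of the symbol with bounded inverse characterizes invertibility of the convolution operator.

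Finally I would translate back into the language of Definition~\ref{JZ-ASS}. Conditions (i) and (iv) are inherited, since $\mathcal{A}$ is unchanged and $\mathcal{C}$ is graph-bounded and defined on $D(\mathcal{A})\subseteq D(\mathcal{B})$, whence $\mathcal{B}+F\mathcal{C}$ is defined on all of $D(\mathcal{A})$ and $D(\mathcal{A})\subseteq D(\mathcal{B}+F\mathcal{C})$ holds; for (iii) one takes $B'=B(I+F\mathcal{C}B)^{-1}$, the bounded correction factor $I+F\mathcal{C}B$ on $U$ being the feedthrough $I+G(\infty)F$ and hence invertible by the large-$s$ behaviour of the hypothesis. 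The only genuinely new point is (ii): that $A_F:=\mathcal{A}|_{\ker(\mathcal{B}+F\mathcal{C})}$ generates a $\Cnull$-semigroup. This I would obtain from the well-posed-system picture above, whose semigroup is exactly the closed-loop generator, or directly by exhibiting the resolvent of $A_F$ through the identity relating $(s-A_F)^{-1}$ to $(s-A)^{-1}$, $B$ and $(I+G(s)F)^{-1}$ via Proposition~\ref{JZ-PROP-2}, the latter being well-defined on $\{\Re s>\omega\}$ precisely by hypothesis.
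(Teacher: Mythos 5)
The first thing to note is that the paper contains no proof of Theorem \ref{WEISS-THM} at all: it is quoted, with a closing \qed, from Weiss \cite{Weiss} and the book \cite{JZ}, and is only \emph{applied} later (in Proposition \ref{WEISS} and Theorem \ref{WP-THM}). So your proposal has to be measured against the literature proof rather than an internal one. Measured that way, your architecture is the right one and is essentially Weiss's: lift the well-posed boundary control system to an abstract well-posed linear system with output map $\Psi$ and causal, shift-invariant input--output map $\mathbb{F}$ whose symbol is $G$; write the loop equation $(I+\mathbb{F}F)y=\Psi x_0+\mathbb{F}u$; invert $I+\mathbb{F}F$ through the correspondence (after an exponential shift) between the $H^\infty$ algebra on a right half-plane and causal shift-invariant operators on $\Ls^2$; and use causality to localize the inverse to every finite horizon $[0,\tau]$, which produces the estimate \eqref{DFN-WP} for the closed loop. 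Identifying that Paley--Wiener step as the crux is accurate.

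Two of your sub-steps, however, do not hold as written. First, the verification of Definition \ref{JZ-ASS}(iii): you take $B'=B(I+F\mathcal{C}B)^{-1}$ and justify invertibility of $I+F\mathcal{C}B$ by identifying $\mathcal{C}B$ with the feedthrough $G(\infty)$. That identification is false in general: from $G(s)=\mathcal{C}(s-A)^{-1}(\mathcal{A}B-sB)+\mathcal{C}B$ the resolvent term need not vanish as $\Re s\to\infty$ (a well-posed system need not even be regular, so $G(\infty)$ may not exist), and, more decisively, the operator $B$ in Definition \ref{JZ-ASS}(iii) is far from unique --- one may replace $B$ by $B+V$ for any bounded $V$ with range in $D(A)$ and $AV$ bounded, which changes $\mathcal{C}B$ while leaving $G$ untouched --- so no hypothesis on $G$ alone can force $I+F\mathcal{C}B$ to be invertible. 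The repair is standard: fix $s_0$ in the hypothesis half-plane intersected with $\rho(A)$ and set $E_{s_0}:=B+(s_0-A)^{-1}(\mathcal{A}B-s_0B)$, the stationary solution operator of Proposition \ref{JZ-PROP-2}; then $E_{s_0}u\in D(\mathcal{A})$, $\mathcal{A}E_{s_0}=s_0E_{s_0}$ is bounded, $\mathcal{B}E_{s_0}=I$ and $\mathcal{C}E_{s_0}=G(s_0)$, hence $(\mathcal{B}+F\mathcal{C})E_{s_0}=I+FG(s_0)$, which is invertible exactly because $I+G(s_0)F$ is, and $B':=E_{s_0}\bigl(I+FG(s_0)\bigr)^{-1}$ satisfies all requirements of (iii). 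Second, your fallback route to Definition \ref{JZ-ASS}(ii) --- ``exhibiting the resolvent of $A_F$'' from a formula valid on a half-plane --- is not sufficient on its own: an operator whose resolvent set contains a right half-plane need not generate a $\Cnull$-semigroup (Hille--Yosida requires bounds on all powers of the resolvent). Generation of the closed-loop semigroup must come from your primary route, i.e.\ the time-domain construction in which the closed-loop state maps are compositions of the bounded open-loop maps with $(I+\mathbb{F}F)^{-1}$, are shown to form a strongly continuous semigroup, and whose generator is then identified with $\mathcal{A}|_{\ker(\mathcal{B}+F\mathcal{C})}$. With these two repairs your sketch is a faithful rendering of the cited proof.
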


\smallskip

We mention that one can---by ignoring any deeper meaning of the systems theory notation above---read the Weiss theorem \textquotedblleft{}just\textquotedblright{} as a perturbation theorem for generators of $\Cnull$-semigroups.  The condition that has to be checked then in order to apply the theorem is that the transfer function $G(s)$ is suitably small. This is exactly what we will do below to obtain the result in Proposition \ref{WEISS}.

\medskip

The Weiss theorem however yields more than \textquotedblleft{}just\textquotedblright{} a generator. This additional value requires the language of systems theory to be explained properly and will be explained in Section \ref{SEC:WP} in particular for those readers interested in systems theory.


\section{Generation}\label{SEC:GEN}\smallskip

In this section various generation results will be established. We proceed here according to the outline of the proof of our main results Theorem \ref{PHS-THM} and Corollary \ref{PHS-COR} that we gave in Section \ref{SEC:INTRO} but in opposite order. That is, we start with treating \textquotedblleft{}weighted transport equations\textquotedblright{}. Then we consider an $n$-dimensional diagonal situation, classify the boundary conditions that lead to generators via the Weiss Theorem and then, in the end reduce the general case to the latter via diagonalization. Notice that the assumptions on the Hamiltonian, e.g., smoothness and boundedness, vary throughout this section.

\smallskip

We start by considering the weighted transport equation on a whole axis.

\smallskip

\begin{prop}\label{PROP-1} Let $w\colon\mathbb{R}\rightarrow\mathbb{R}$ be a continuous function with $\frac{1}{w}\cdot\mathbb{1}_{(-\infty,0)},\,\frac{1}{w}\cdot\mathbb{1}_{(0,\infty)}\not\in \Ls^1(\mathbb{R})$ and without zeros. Then the operator $A_{w}\colon D(A_{w})\rightarrow \Ls^2_{|w|}(\mathbb{R})$ given by
\begin{eqnarray*}
A_{w}x &= &(wx)'\\
D(A_{w}) &= &\bigl\{x\in \Ls^2_{|w|}(\mathbb{R})\:;\:wx\in \AC(\mathbb{R})\,\text{ and }\,(wx)'\in \Ls^2_{|w|}(\mathbb{R})\bigr\}
\end{eqnarray*}
generates a unitary $\Cnull$-group $(T_{w}(t))_{t\in\mathbb{R}}$ given by
$$
(T_{w}(t)x)(\xi)=\frac{w(\xi+\mu_{w}(\xi,t))}{w(\xi)}x(\xi+\mu_{w}(\xi,t)) 
$$
for $x\in \Ls_{|w|}^2(\mathbb{R})$ and $\xi\in\mathbb{R}$. Here, $p_w$ and $\mu_w$ denote the maps defined in Lemma \ref{LEM-1}.
\end{prop}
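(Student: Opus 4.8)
The plan is to realise $A_w$ as unitarily equivalent to the generator $\tfrac{\dd}{\dd s}$ of the translation group on the \emph{unweighted} space $\Ls^2(\mathbb{R})$, via the change of variables encoded in $p_w$. The explicit formula for $T_w$ then drops out mechanically, and the identification of the generator becomes exact, since unitary equivalence transports domains verbatim. First I would reduce to the case $w>0$: as $w$ is continuous and has no zeros on the connected set $\mathbb{R}$, it has constant sign, and if $w<0$ then $\tilde w:=-w>0$ satisfies $\Ls^2_{|w|}(\mathbb{R})=\Ls^2_{|\tilde w|}(\mathbb{R})$, $D(A_w)=D(A_{\tilde w})$ and $A_w=-A_{\tilde w}$. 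Once the positive case is settled, $A_w$ therefore generates $t\mapsto T_{\tilde w}(-t)$, and Lemma \ref{LEM-1}(x) (which gives $\mu_{\tilde w}(\xi,-t)=\mu_w(\xi,t)$) identifies this with the asserted $T_w(t)$ and keeps it unitary. So from now on $w>0$.

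Next I introduce the conjugating operator. By Lemma \ref{LEM-1}(i),(ii) the map $p_w\colon\mathbb{R}\rightarrow\mathbb{R}$ is a $\operatorname{C}^1$, strictly increasing bijection with $p_w'=1/w$ and $p_w(\xi)\rightarrow\pm\infty$; here the hypotheses $\tfrac1w\mathbb{1}_{(-\infty,0)},\tfrac1w\mathbb{1}_{(0,\infty)}\notin\Ls^1(\mathbb{R})$ are precisely what force surjectivity onto the whole line. I define $U\colon\Ls^2_w(\mathbb{R})\rightarrow\Ls^2(\mathbb{R})$ by $(Ux)(s):=(wx)(p_w^{-1}(s))$. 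The substitution $s=p_w(\xi)$, $\dd s=w(\xi)^{-1}\dd\xi$, yields $\|Ux\|_{\Ls^2(\mathbb{R})}=\|x\|_{\Ls^2_w(\mathbb{R})}$, and a direct check shows $(U^{-1}g)(\xi)=g(p_w(\xi))/w(\xi)$ is a two-sided inverse, so $U$ is unitary.

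I would then verify $U A_w U^{-1}=\tfrac{\dd}{\dd s}$ \emph{with matching domains}. Writing $v:=wx$, so that $Ux=v\circ p_w^{-1}$, the fact that $p_w$ and $p_w^{-1}$ are $\operatorname{C}^1$ (Lemma \ref{LEM-1}(ii)) and strictly monotone means $v\in\AC(\mathbb{R})$ if and only if $v\circ p_w^{-1}\in\AC(\mathbb{R})$ (composition of a locally absolutely continuous function with a locally Lipschitz monotone map, in both directions), and the chain rule gives, almost everywhere, $(v\circ p_w^{-1})'(s)=v'(p_w^{-1}(s))\,w(p_w^{-1}(s))=(U(v'))(s)=(U(A_wx))(s)$, using Lemma \ref{AC-LEM}. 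Since $U$ is isometric, this also gives $v'\in\Ls^2_w(\mathbb{R})\Leftrightarrow(Ux)'\in\Ls^2(\mathbb{R})$. Hence $U$ restricts to a bijection $D(A_w)\rightarrow\operatorname{H}^1(\mathbb{R})=D(\tfrac{\dd}{\dd s})$ with $UA_wx=(Ux)'$, that is, $A_w=U^{-1}\tfrac{\dd}{\dd s}U$. As $\tfrac{\dd}{\dd s}$ generates the unitary translation group $(\sigma(t)g)(s)=g(s+t)$ on $\Ls^2(\mathbb{R})$, it follows that $A_w$ generates the unitary $\Cnull$-group $T_w(t):=U^{-1}\sigma(t)U$. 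Unwinding this, $(T_w(t)x)(\xi)=(\sigma(t)Ux)(p_w(\xi))/w(\xi)=(Ux)(p_w(\xi)+t)/w(\xi)=(wx)(p_w^{-1}(p_w(\xi)+t))/w(\xi)$, and since $p_w^{-1}(p_w(\xi)+t)=\xi+\mu_w(\xi,t)$ by the very definition of $\mu_w$, this is exactly the claimed expression.

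I expect the main obstacle to lie in the generator step. Any direct attack computes the difference quotient $\tfrac1t\bigl(T_w(t)x-x\bigr)(\xi)=\tfrac{1}{w(\xi)}\cdot\tfrac{v(\xi+\mu_w(\xi,t))-v(\xi)}{t}$ and, using $\mu_w(\xi,t)/t\rightarrow w(\xi)$ from Lemma \ref{LEM-1}(viii) together with Lemma \ref{LEM-1}(vii), obtains the \emph{pointwise} limit $(wx)'(\xi)=(A_wx)(\xi)$ almost everywhere; the delicate point is then upgrading this to convergence in the norm of $\Ls^2_w(\mathbb{R})$, where no obvious dominating function is available because $w$ need be neither bounded nor bounded away from zero. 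The conjugation above is precisely what removes this difficulty: since $U$ is a genuine isometric isomorphism onto $\Ls^2(\mathbb{R})$, the generator, its domain, and strong continuity are all inherited verbatim from the translation group, so the only real work is the measure-theoretic bookkeeping of Step three (the equivalence of the absolute-continuity conditions and the almost-everywhere chain rule under the change of variables).
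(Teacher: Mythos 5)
Your proof is correct, and it takes a genuinely different route from the paper. The paper verifies everything by hand: it first checks via substitution that the explicit formula defines isometries, then the group law (Lemma \ref{LEM-1}(iv),(vi)), then strong continuity by a density argument on compactly supported continuous functions, and finally identifies the generator in two steps --- a difference-quotient/Lebesgue-differentiation argument showing the generator $B$ satisfies $B\subseteq A_w$, followed by an explicit resolvent computation (using the boundedness of $wx$ from Lemma \ref{BAR-LEM}) to get $1\in\rho(A_w)$ and hence $B=A_w$. You instead conjugate by the unitary $U\colon\Ls^2_w(\mathbb{R})\rightarrow\Ls^2(\mathbb{R})$, $Ux=(wx)\circ p_w^{-1}$, and reduce the whole statement to the classical fact that $\tfrac{\dd}{\dd s}$ with domain $\operatorname{H}^1(\mathbb{R})$ generates the translation group. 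This is shorter and cleaner: generation, the group law, strong continuity, unitarity and the \emph{exact} domain identification are all inherited at once, so you never need Lemma \ref{BAR-LEM}, the resolvent formula, or the delicate $B\subseteq A$ versus $A=B$ two-step. The price is that you must justify the measure-theoretic bookkeeping --- that composition with the $C^1$ strictly monotone diffeomorphisms $p_w$, $p_w^{-1}$ (which are locally Lipschitz) preserves local absolute continuity in both directions, and the a.e.\ chain rule, which follows from the change-of-variables formula for monotone $C^1$ substitutions --- and quote the standard description of the translation group's generator (available in \cite{EN}). Your reduction of the negative case via $A_w=-A_{\tilde w}$ and Lemma \ref{LEM-1}(x) matches the paper's treatment. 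One remark: the paper's hands-on route is not wasted effort in context, since the explicit semigroup formula and the Lemma \ref{LEM-1} machinery are reused later (the invariance/quotient arguments in Propositions \ref{PROP-SEMI-PLUS} and \ref{PROP-SEMI-MINUS}), but your argument also delivers the explicit formula, so it could serve as a drop-in replacement.
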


\begin{proof} Let $w$ be positive. We first claim that $T_w(t)\colon \Ls^2_{w}(\mathbb{R})\rightarrow \Ls^2_{w}(\mathbb{R})$ is well-defined and even isometric for fixed $t\geqslant0$. For this we define the map $\nu\colon\mathbb{R}\rightarrow\mathbb{R}$, $\nu(\xi):=\xi+\mu_w(\xi,t)$. Since $\nu(\xi)=p^{-1}_w(p_w(\xi)+t)$ holds, we can use Lemma \ref{LEM-1}(i) to see that
\begin{equation}\label{LIM-NU}
\lim_{\xi\rightarrow\pm\infty}\nu(\xi)=\pm\infty
\end{equation}
is true. From Lemma \ref{LEM-1}(ix) we obtain $\frac{\dd\nu}{\dd\xi}=\frac{w(\nu(\xi))}{w(\xi)}$. By substitution it follows that
$$
\|T_w(t)x\|_{\Ls^2_{w}(\mathbb{R})}^2=\|x\|_{\Ls^2_{w}(\mathbb{R})}^2
$$
holds, which establishes our first claim. Adapting the statements of Lemma \ref{LEM-1}(i)--(ix) for $-w$, it is easy to see that also $T_{-w}(t)\colon\Ls^2_{w}(\mathbb{R})\rightarrow \Ls^2_{w}(\mathbb{R})$ is a well-defined isometry. Using Lemma \ref{LEM-1}(x) it follows that $T_w(-t)=T_{-w}(t)$ holds for all $t\in\mathbb{R}$. Let $t_0>0$ be fixed. A straighforward computation, again employing Lemma \ref{LEM-1}(x), shows that $T_w(t_0)T_{-w}(t_0)x=x$ holds for every $x\in \Ls_w^2(\mathbb{R})$. This shows that $T_w(t_0)$ is invertible with inverse $T_w(t_0)^{-1}=T_{-w}(t_0)$. Employing \cite[Proposition on p.\,80]{EN} it is enough to show that $(T_w(t))_{t\geqslant0}$ satisfies the evolution property and is strongly continuous to conclude that $(T_w(t))_{t\in\mathbb{R}}$ as defined in Proposition \ref{PROP-1} is a $\Cnull$-group. Each operator in this group will then be an isometry: For $t\geqslant0$ we showed this already and for $t<0$ we have $T_w(t)=T_{-w}(-t)$.

\medskip

In order to check the evolution property and strong continuity, we simplify our notation by setting $p:=p_w$, $\mu:=\mu_w$, $A:=A_w$, and $T(t):=T_w(t)$ for $t\geqslant0$. It is easy to see that
$$
(T(0)x)(\xi)=x(\xi) \;\; \text{ and } \;\; (T(t)T(s)x)(\xi)=(T(t+s)x)(\xi)
$$
holds for $x\in \Ls^2_{w}(\mathbb{R})$, $\xi\in\mathbb{R}$ and $t$, $s\geqslant0$, by applying Lemma \ref{LEM-1}(iv) and \ref{LEM-1}(vi).

\medskip

Next we show that $\T$ is strongly continuous. We thus consider only $t\geqslant0$ in the arguments below. We fix $x\in C_c(\mathbb{R})\subseteq \Ls^2_{w}(\mathbb{R})$, we select $k>1$ such that $\supp x\subseteq[-k+1,k-1]$. Using Lemma \ref{LEM-1}(vii) we select $T>0$ such that $|\mu(\xi,t)|\leqslant1$ holds for all $\xi\in[-k,k]$ and $t\in[0,T]$. For $\xi\not\in[-k,k]$ and $t\in[0,T]$ we have $\xi,\,\xi+\mu(\xi,t)\not\in[-k+1,k-1]$ which means $x(\xi)=x(\xi+\mu(\xi,t))=0$. For $t\in[0,T]$ we can thus compute
\begin{eqnarray}\label{EQ-3}
\|T(t)x-x\|_{\infty} & = & \sup_{\xi\in[-k,k]}\,\Bigl|\frac{w(\xi+\mu(\xi,t))}{w(\xi)}x(\xi+\mu(\xi,t))-x(\xi)\Bigr|\nonumber\\
&\leqslant & \sup_{\xi\in[-k,k]}\frac{w(\xi+\mu(\xi,t))}{w(\xi)}\bigl|x(\xi+\mu(\xi,t))-x(\xi)\bigr| + \sup_{\xi\in[-k,k]}\Bigl|\frac{w(\xi+\mu(\xi,t))}{w(\xi)}-1\Bigr|\bigl|x(\xi)\bigr|\nonumber\\
&\leqslant&\sup_{\stackrel{\scriptscriptstyle\xi\in[-k,k]}{\scriptscriptstyle s\in[0,T]}}\frac{w(\xi+\mu(\xi,s))}{w(\xi)}\cdot\sup_{\xi\in[-k,k]}\bigl|x(\xi+\mu(\xi,t))-x(\xi)\bigr|\nonumber\\
& & \phantom{++++++}+ \sup_{\xi\in[-k,k]}\frac{|x(\xi)|}{w(\xi)}\cdot\sup_{\xi\in[-k,k]}\bigl|w(\xi+\mu(\xi,t))-w(\xi)\bigr|.
\end{eqnarray}
Let $y\in\{x,w\}$ be given. We claim that
$$
\forall\:\epsilon>0\;\exists\:T_0\in(0,T]\;\forall\:t\in[0,T_0],\xi\in[-k,k]\colon|y(\xi+\mu(\xi,t))-y(\xi)|<\epsilon
$$
holds. Let $\epsilon>0$ be given. Since $y\colon[-k-1,k+1]\rightarrow\mathbb{C}$ is uniformly continuous we can select $\delta>0$ such that $|y(\xi_1)-y(\xi_2)|<\epsilon$ holds for all $\xi_1,\xi_2\in[-k-1,k+1]$ with $|\xi_1-\xi_2|<\delta$. By Lemma \ref{LEM-1}(vii) there exists $t_0>0$ such that
$$
\forall\:t\in[0,t_0],\,\xi\in[-k,k]\colon|\mu(\xi,t)|<\delta
$$
holds. We put $T_0:=\max\{t_0,T\}$ and consider an arbitrary $t\in[0,T_0]$. We put $\xi_1:=\xi+\mu(\xi,t)$, $\xi_2:=\xi$, which both belong to $[-k-1,k+1]$, since $t\leqslant T$, and satisfy $|\xi_1-\xi_2|=|\mu(\xi,t)|<\delta$. Therefore, $|y(\xi+\mu(\xi,t))-y(\xi)|=|y(\xi_1)-y(\xi_2)|<\epsilon$ holds. We showed that
$$
\lim_{t\rightarrow0}\sup_{\xi\in[-k,k]}\bigl|y(\xi+\mu(\xi,t))-y(\xi)\bigr|=0.
$$
In view of \eqref{EQ-3} this shows $\lim_{t\rightarrow0}\|T(t)x-x\|_{\infty}=0$.

\smallskip

We observe that our selections of $T>0$ and $k>1$ above guarantee that 
$$
\forall\:t\in[0,T]\colon \supp T(t)x\subseteq[-k,k]
$$
holds. Indeed, if we fix $t\in[0,T]$ and take $\xi\in\mathbb{R}$ with $(T(t)x)(\xi)\not=0$ then we have necessarily $x(\xi+\mu(\xi,t))\not=0$ which means $\xi+\mu(\xi,t)\in\supp x\subseteq[-k+1,k-1]$. Since $|\mu(\xi,t)|\leqslant1$ holds, we obtain that $\xi\in[-k,k]$ is valid. By the latter, we can consider
$$
\forall\:t\in[0,T]\colon T(t)x, x\in C[-k,k]
$$
and by the last paragraph we have $\lim_{t\rightarrow0}T(t)x=x$ uniformly on $[-k,k]$. Consequently, $\lim_{t\rightarrow0}T(t)x=x$ holds in particular with respect to the norm $\|\cdot\|_{\Ls^2(-k,k)}$. Since on $\Ls^2(-k,k)$ the latter norm is equivalent to $\|\cdot\|_{\Ls^2_{w}(-k,k)}$, we get
$$
\lim_{t\rightarrow0}\|T(t)x-x\|_{\Ls^2_{w}(\mathbb{R})}=\lim_{t\rightarrow0}\|T(t)x-x\|_{\Ls^2_{w}(-k,k)}=0.
$$
By \cite[Proposition I.5.3]{EN} it follows that $\T$ is strongly continuous.

\medskip

Let $B\colon D(B)\subseteq \Ls^2_{w}(\mathbb{R})\rightarrow \Ls^2_{w}(\mathbb{R})$ be the generator of $\T$. We first claim that $B\subseteq A$ holds. Let $x\in D(B)$ be given and put $y:=Bx\in \Ls^2_{w}(\mathbb{R})$. Then $\lim_{t\rightarrow0}\frac{T(t)x-x}{t}-y=0$ holds and thus
\begin{equation}\label{EQ-3a}
\lim_{t\rightarrow0}\int_a^b\frac{(T(t)x)(\xi)-x(\xi)}{t}\dd\xi=\int_a^by(\xi)\dd\xi
\end{equation}
can be concluded since we have $0<\inf_{\xi\in I}w(\xi)\leqslant\sup_{\xi\in I}w(\xi)<\infty$ for any compact interval $I\subseteq\mathbb{R}$. On the other hand for fixed $t\geqslant0$ we compute
\begin{equation}\label{EQ-4}
\int_a^b\frac{(T(t)x)(\xi)-x(\xi)}{t}\dd\xi = \frac{1}{t}\int_{b}^{b+\mu(b,t)}x(\nu)\dd\nu-\frac{1}{t}\int_a^{a+\mu(b,t)} x(\nu)\,\dd\nu
\end{equation}
by substitution with $\nu$ as at the beginning of this proof. Next we use that $\lim_{t\rightarrow0}\mu(a,t)/t=w(a)$ holds by Lemma \ref{LEM-1}(viii) and that $\lim_{t\rightarrow0}\mu(a,t)=0$ holds by Lemma \ref{LEM-1}(vii) to conclude that
\begin{eqnarray*}
\lim_{t\rightarrow0}\frac{1}{t}\int_{a}^{a+\mu(a,t)}x(\xi)\dd\xi & = & \lim_{t\rightarrow0}\frac{\mu(a,t)}{t}\cdot\frac{1}{\mu(a,t)}\int_{a}^{a+\mu(a,t)}x(\xi)\dd\xi\\
& = & \lim_{t\rightarrow0}\frac{\mu(a,t)}{t}\cdot\lim_{\mu\rightarrow0}\frac{1}{\mu}\int_a^{a+\mu}x(\xi)\dd\xi \; = \; w(a)x(a)
\end{eqnarray*}
is true for almost every $a\in\mathbb{R}$. In the last step we employed that
$$
\lim_{t\rightarrow0}\frac{1}{\mu}\int_a^{a+\mu}x(\xi)\dd\xi=x(a)
$$
holds for almost every $a\in\mathbb{R}$ in view of \cite[Remark after Theorem 9-8\:VI]{Tay85} and by using that $x\in \Ls^2_{w}(\mathbb{R})\subseteq \Ls^1_{\text{loc}}(\mathbb{R})$ holds. We treat the other summand in \eqref{EQ-4} similarly and obtain in view of \eqref{EQ-3a} that
$$
\int_a^by(\xi)\dd\xi=\lim_{t\rightarrow0}\int_a^b\frac{(T(t)x)(\xi)-x(\xi)}{t}\dd\xi= w(b)x(b)-w(a)x(a)
$$
holds for almost all $a,b\in\mathbb{R}$. By changing $x$ on a null set we get that
\begin{equation*}
(w{}x)(b)=(w{}x)(a)+\int_a^by(\xi)\dd\xi
\end{equation*}
holds for all $a,b\in\mathbb{R}$. If we fix $a\in\mathbb{R}$ the latter equation and \cite[Lemma 3.31]{GL} show in particular that $w{}x$ is continuous and that $(w{}x)'=y$ holds almost everywhere. As $y\in \Ls^2_{w}(\mathbb{R})$ holds by definition, we showed $w{}x\in D(A)$ and $Ax=(w{}x)'=y=Bx$ which establishes $B\subseteq A$.

\medskip

It remains to check that $B=A$ holds. As $\T$ is a semigroup of contractions, \cite[Theorem II.1.10(ii)]{EN} implies that $1\in\rho(B)$ holds. In view of \cite[Exercise IV.1.21(5)]{EN} it is enough to show that $1\in\rho(A)$ holds, to conclude $A=B$. We fix $\xi\in\mathbb{R}$ and define $s\colon[0,\infty)\rightarrow\mathbb{R}$, $s(t)=p^{-1}(p(\xi)+t)$. From Lemma \ref{LEM-1}(i) we conclude
\begin{equation*}
\lim_{t\rightarrow\infty}s(t)=\infty
\end{equation*}
and by definition it follows that $s(0)=p^{-1}(p(\xi)+0)=\xi$ is true. We observe that $s(t)=\xi+\mu(\xi,t)$ holds for all $t\geqslant0$ and use Lemma \ref{LEM-1}(ix) to conclude that $s$ is continuously differentiable with
\begin{equation*}
\frac{\dd s}{\dd t}=w(\xi+\mu(\xi,t)).
\end{equation*}
Moreover we can compute $p(s(t))=p(\xi+\mu(\xi,t))=p(p^{-1}(p(\xi)+t))=p(\xi)+t$ which implies that $t=p(s(t))-p(\xi)$ holds for all $t\geqslant0$. We fix $\theta>0$ and $x\in \Ls^2_{w}(\mathbb{R})$. Then $\theta\in\rho(B)$ and we get by substitution
\begin{equation*}
(R(\theta,B)x)(\xi) = \int_{\xi}^{\infty}\e^{-\theta{}(p(s)-p(\xi))}\frac{x(s)}{w(\xi)}\,\dd s
\end{equation*}
which defines the resolvent $R(\theta,B)\in L(\Ls^2_{w}(\mathbb{R}),\Ls^2_{w}(\mathbb{R}))$. We claim that $R(\theta,B)$ is an inverse for the operator $\theta-A\colon D(A)\rightarrow \Ls^2_{w}(\mathbb{R})$. For $x\in D(A)$ and $\xi\in\mathbb{R}$ we compute
\begin{eqnarray*}
[R(\theta,B)(\theta-A)x](\xi)\!\!&=&\!\!\lim_{R\rightarrow\infty}\Bigl(\int_{\xi}^R\!\!\e^{-\theta{}(p(t)-p(\xi))}\frac{\theta{}x(t)}{w(\xi)}\,\dd t-\e^{-\theta(p(R)-p(\xi))}\frac{(w{}x)(R)}{w(\xi)}+x(\xi)\nonumber\\
& &\phantom{++++++++++++++++}-\int_{\xi}^R\frac{\theta{}}{w(t)}\e^{-\theta{}(p(t)-p(\xi))}\frac{w{}(t)x(t)}{w(\xi)}\,\dd t\Bigr)\nonumber\\
&=& \!\!x(\xi)-\frac{\e^{\theta{}p(\xi)}}{w(\xi)}\lim_{R\rightarrow\infty}\e^{-\theta{}p(R)}(w{}x)(R)\;\;=\;\;x(\xi)
\end{eqnarray*}
where the last equality follows from $\lim_{R\rightarrow\infty}\e^{-\theta{}p(R)}=0$ and the fact that $w{}x$ is bounded by Lemma \ref{BAR-LEM}. We know that $\ran R(\theta,B)=D(B)\subseteq D(A)$ holds and thus we can compute
$$
(\theta-A)R(\theta,B)x=(\theta-B)R(\theta,B)x=x
$$
for $x\in \Ls^2_{w}(\mathbb{R})$. This finishes the proof that the operator $A_w$ given in Proposition \ref{PROP-1} is the generator of the $\Cnull$-semigroup $(T_w(t))_{t\geqslant0}$ and thus also of the $\Cnull$-group $(T_w(t))_{t\in\mathbb{R}}$ of consisting of isometries.

\medskip

In order to treat the case of negative $w$, it is enough to show that, for positive $w$, the operator $A_{-w}$ generates $(T_{-w}(t))_{t\in\mathbb{R}}$. This follows however immediately since $A_{-w}=-A_w$ and $T_{-w}(t)=T_{w}(-t)$ holds, cf.~\cite[p.~78]{EN}.
\end{proof}

\smallskip

Now we treat the case of operators on the semi-axis. Below we consider a continuous function $\lambda\colon[0,\infty)\rightarrow(0,\infty)$ with $\frac{1}{\lambda}\not\in \Ls^1(0,\infty)$ and work in the space $\Ls^2_{\lambda}(0,\infty)$. We get the following result.

\smallskip

\begin{prop}\label{PROP-SEMI-PLUS} Let $\lambda\colon[0,\infty)\rightarrow(0,\infty)$ be continuous and  such that $\frac{1}{\lambda}\not\in \Ls^1(0,\infty)$. The operator $A_{\lambda}\colon D(A_{\lambda})\rightarrow \Ls^2_{\lambda}(0,\infty)$ given by
\begin{eqnarray*}
A_{\lambda}x &= &(\lambda x)'\\
D(A_{\lambda}) &= &\bigl\{x\in \Ls^2_{\lambda}(0,\infty)\:;\:\lambda{}x\in \AC[0,\infty) \text{ and } (\lambda{}x)'\in \Ls^2_{\lambda}(0,\infty)\bigr\}
\end{eqnarray*}
generates the $\Cnull$-semigroup $(T_{\lambda}(t))_{t\geqslant0}$ given by
$$
(T_{\lambda}(t)x)(\xi)=\frac{\lambda(\xi+\mu_{\lambda}(\xi,t))}{\lambda(\xi)}x(\xi+\mu_{\lambda}(\xi,t)).
$$
\end{prop}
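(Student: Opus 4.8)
The plan is to follow the proof of Proposition \ref{PROP-1} almost line by line, the only structural change being the presence of the single endpoint $\xi=0$. It is convenient to have the whole-axis result available: I would first extend $\lambda$ to a continuous, zero-free, positive function $\tilde\lambda\colon\mathbb{R}\rightarrow(0,\infty)$ by setting, say, $\tilde\lambda(\xi):=\lambda(0)$ for $\xi<0$. Then $\frac{1}{\tilde\lambda}\cdot\mathbb{1}_{(-\infty,0)}$ is a positive constant multiple of $\mathbb{1}_{(-\infty,0)}$ and hence not in $\Ls^1(\mathbb{R})$, while $\frac{1}{\tilde\lambda}\cdot\mathbb{1}_{(0,\infty)}=\frac{1}{\lambda}\cdot\mathbb{1}_{(0,\infty)}\notin\Ls^1(\mathbb{R})$ by hypothesis, so Proposition \ref{PROP-1} applies and $A_{\tilde\lambda}$ generates the unitary group $(T_{\tilde\lambda}(t))_{t\in\mathbb{R}}$ on $\Ls^2_{\tilde\lambda}(\mathbb{R})$. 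Note that $p_{\tilde\lambda}=p_\lambda$ and $\mu_{\tilde\lambda}=\mu_\lambda$ on $[0,\infty)\times[0,\infty)$, since both agree with the half-axis definitions there.

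Next I would establish the formula and the semigroup properties directly, recording the effect of the boundary. For fixed $t\geqslant0$ the substitution $\nu=\xi+\mu_\lambda(\xi,t)=p_\lambda^{-1}(p_\lambda(\xi)+t)$, whose derivative is $\frac{\dd\nu}{\dd\xi}=\lambda(\nu)/\lambda(\xi)$ by Lemma \ref{LEM-1}(ix), now maps $[0,\infty)$ increasingly onto $[\mu_\lambda(0,t),\infty)$ because $\nu(0)=p_\lambda^{-1}(t)=\mu_\lambda(0,t)>0$. Consequently $\|T_\lambda(t)x\|_{\Ls^2_\lambda(0,\infty)}^2=\int_{\mu_\lambda(0,t)}^{\infty}\lambda(\nu)|x(\nu)|^2\dd\nu\leqslant\|x\|_{\Ls^2_\lambda(0,\infty)}^2$, so each $T_\lambda(t)$ is a contraction rather than an isometry; the defect is exactly the mass leaving through the endpoint $\xi=0$. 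The identities $T_\lambda(0)=\id$ and $T_\lambda(t)T_\lambda(s)=T_\lambda(t+s)$ follow from Lemma \ref{LEM-1}(iv) and \ref{LEM-1}(vi) verbatim, and strong continuity follows from the same uniform-continuity estimate as in Proposition \ref{PROP-1}: since $\xi$ and $\xi+\mu_\lambda(\xi,t)$ remain in $[0,\infty)$ for $\xi\geqslant0$, $t\geqslant0$, the argument applied to $x\in C_c[0,\infty)$ and to $\lambda$ on a compact interval $[0,k+1]$ goes through unchanged. Alternatively, one may identify $T_\lambda(t)$ with the compression $PT_{\tilde\lambda}(t)|_{M^\perp}$ to $M^\perp=\Ls^2_\lambda(0,\infty)$, where $M=\{f\in\Ls^2_{\tilde\lambda}(\mathbb{R})\,;\,f|_{(0,\infty)}=0\}$ is $T_{\tilde\lambda}(t)$-invariant for $t\geqslant0$; invariance of $M$ makes the compression a $\Cnull$-semigroup that inherits contractivity and strong continuity from the group.

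The heart of the proof is the identification of the generator $B$ of $(T_\lambda(t))_{t\geqslant0}$ with $A_\lambda$. For the inclusion $B\subseteq A_\lambda$ I would repeat the weak-differentiation computation from Proposition \ref{PROP-1}: using Lemma \ref{LEM-1}(vii),(viii) and the Lebesgue differentiation theorem one obtains $(\lambda x)(b)-(\lambda x)(a)=\int_a^b(Bx)(\xi)\dd\xi$ for almost all $a,b\geqslant0$, so that after modification on a null set $\lambda x\in\AC[0,\infty)$, with continuity up to the endpoint, and $(\lambda x)'=Bx\in\Ls^2_\lambda(0,\infty)$; thus $x\in D(A_\lambda)$ and $A_\lambda x=Bx$. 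For the reverse inclusion it suffices to show $\theta\in\rho(A_\lambda)$ for some $\theta>0$, using that $\theta\in\rho(B)$ for the contraction semigroup. Here I would check that the operator $(R(\theta,B)x)(\xi)=\int_\xi^{\infty}\e^{-\theta(p_\lambda(s)-p_\lambda(\xi))}\frac{x(s)}{\lambda(\xi)}\,\dd s$, obtained by substitution from the integral representation of the resolvent, inverts $\theta-A_\lambda$ on $D(A_\lambda)$. Integrating by parts from $\xi$ to $R$ and letting $R\rightarrow\infty$, the term at infinity vanishes because $\e^{-\theta p_\lambda(R)}\rightarrow0$ and $\lambda x$ is bounded by the half-axis version of Lemma \ref{BAR-LEM} (whose first part is proved from the base point $0$ and applies verbatim on $[0,\infty)$), while the lower endpoint contributes precisely $x(\xi)$.

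The main obstacle, and the point that distinguishes this proposition from Proposition \ref{PROP-1}, is precisely this last computation: I expect the delicate issue to be confirming that \emph{no} boundary condition at $\xi=0$ is forced on $D(A_\lambda)$. This is structurally encoded in the fact that the resolvent integral starts at $\xi$ rather than at $0$, so that the integration by parts produces a boundary term only at $+\infty$ and never at the endpoint; equivalently, the positive weight $\lambda$ corresponds to a left shift that is outgoing at $\xi=0$, which by the heuristic of Section \ref{SEC:INTRO} produces no boundary condition. With $B\subseteq A_\lambda$ and $\theta\in\rho(A_\lambda)$ in hand, the standard argument from the end of the proof of Proposition \ref{PROP-1} gives $(\theta-A_\lambda)R(\theta,B)x=(\theta-B)R(\theta,B)x=x$ on $\Ls^2_\lambda(0,\infty)$, whence $B=A_\lambda$ and the proof is complete.
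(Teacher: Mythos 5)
Your proposal is correct, but its primary route is genuinely different from the paper's. The paper does not re-run the analysis of Proposition \ref{PROP-1} on the half-axis: it extends $\lambda$ to a weight $w$ on $\mathbb{R}$, checks that the closed subspace $\Ls^2_{v}(-\infty,0)$ of functions vanishing on $[0,\infty)$ is $(T_{w}(t))_{t\geqslant0}$-invariant, and then invokes the abstract quotient construction \cite[I.5.13 and II.2.4]{EN}: the induced semigroup on $\Ls^2_{w}(\mathbb{R})/\Ls^2_{v}(-\infty,0)\cong\Ls^2_{\lambda}(0,\infty)$ is automatically a $\Cnull$-semigroup whose generator is the induced operator with domain $q(D(A_{w}))$; under the identification $[x]\mapsto x|_{[0,\infty)}$ this yields the stated formula for $T_{\lambda}$, and the only remaining analytic work is the identity $q(D(A_{w}))=D(A_{\lambda})$, proved by extending elements of $D(A_{\lambda})$ smoothly across $0$. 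Your alternative remark --- the compression $PT_{\tilde\lambda}(t)|_{M^{\perp}}$ along the invariant subspace $M$ --- is precisely this argument in Hilbert-space disguise, since quotienting by $M$ is canonically the same as compressing to $M^{\perp}$; but you deploy it only for the semigroup axioms, whereas the paper's appeal to \cite[II.2.4]{EN} also delivers the generator, so that no resolvent computation is needed at all. Your main route instead redoes everything on $[0,\infty)$: contractivity via the substitution $\nu=\xi+\mu_{\lambda}(\xi,t)$, the semigroup law and strong continuity as in Proposition \ref{PROP-1}, then $B\subseteq A_{\lambda}$ by the weak-differentiation argument and finally the explicit resolvent. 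This is sound, because $\mu_{\lambda}(\xi,t)\geqslant0$ keeps all evaluation points in $[0,\infty)$, Lemma \ref{BAR-LEM} holds verbatim on the half-axis with base point $0$, and the resolvent integral starting at $\xi$ produces no boundary term at $0$ --- exactly as you say, the analytic expression of the fact that no boundary condition is imposed. (Only your closing step should be ordered as at the end of Proposition \ref{PROP-1}: the left-inverse identity on $D(A_{\lambda})$ together with $(\theta-A_{\lambda})R(\theta,B)=(\theta-B)R(\theta,B)=I$ establishes $\theta\in\rho(A_{\lambda})$, and then \cite[Exercise IV.1.21(5)]{EN} yields $A_{\lambda}=B$.) As for what each approach buys: your direct route is self-contained and makes visible both that each $T_{\lambda}(t)$ is a contraction and not an isometry --- the defect being the mass leaving through $\xi=0$ --- and why the endpoint forces no condition; the paper's quotient route is considerably shorter, because the generator identification, which you rightly call the heart of the matter, is absorbed entirely into the abstract theory.
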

\begin{proof} We select a strictly positive and continuous function $w\colon\mathbb{R}\rightarrow\mathbb{R}$ such that $\frac{1}{w}|_{(-\infty,0]}\not\in \Ls^1(-\infty,0)$ and $w|_{[0,\infty)}=\lambda$. We put $v:=w|_{(-\infty,0)}$. Then we consider $\Ls^2_{v}(-\infty,0)\subseteq \Ls^2_{w}(\mathbb{R})$ which is a closed subspace. Let $(T_{w}(t))_{t\geqslant0}$ be as in Proposition \ref{PROP-1}. We claim that $\Ls^2_{v}(-\infty,0)$ is $(T_{w}(t))_{t\geqslant0}$-invariant. Let $x\in \Ls^2_{v}(-\infty,0)$ be given. We consider $x$ as an element of $\Ls^2_{w}(\mathbb{R})$, that is we have $x|_{(0,\infty)}\equiv0$. For $\xi\geqslant0$ and $t\geqslant0$ we have $\xi\geqslant0\geqslant p_{w}^{-1}(-t)$ which implies $p_{w}(\xi)\geqslant-t$ and thus $p_{w}(\xi)+t\geqslant0$ as $p_w$ is increasing. As $p^{-1}_{w}$ is also increasing, we get
$$ 
\xi+\mu_{w}(\xi,t)=p_{w}^{-1}(p_{w}(\xi)+t)\geqslant p^{-1}_{w}(0)=0
$$
and thus $x(\xi+\mu_{w}(\xi,t))=0$. Recalling the formula of the semigroup we see that this implies $(T_{w}(t)x)(\xi)=0$ which means $T_w(t)x\in \Ls^2_{v}(-\infty,0)$. We consider the quotient map
$$
q\colon \Ls^2_w(\mathbb{R})\longrightarrow\,\quot{\Ls_w^2(\mathbb{R})}{\Ls_v^2(-\infty,0)}
$$
and apply \cite[I.5.13]{EN} to get a $\Cnull$-semigroup $(T(t)_{/})_{t\geqslant0}$ on $\Ls_w^2(\mathbb{R})/\Ls_v^2(-\infty,0)$ given by $T(t)_{/}q(x)=q(T_w(t)x)$ which is generated by $A_{/}\colon D(A_{/})\rightarrow \Ls_w^2(\mathbb{R})/\Ls_v^2(-\infty,0)$ with $D(A_{/})=q(D(A_w))$ and $A_{/}q(x)=q(A_wx)$, see \cite[II.2.4]{EN}. Now we identify $\Ls_w^2(\mathbb{R})/\Ls_v^2(-\infty,0)\cong \Ls^2_{\lambda}(0,\infty)$ via $[x]\mapsto x|_{[0,\infty)}$ which transforms $q$ into the mapping
$$
q\colon \Ls^2_w(\mathbb{R})\longrightarrow \Ls^2_{\lambda}(0,\infty),\;\;q(x)=x|_{[0,\infty)}.
$$
For $x\in \Ls^2_{\lambda}(0,\infty)$ we select $y\in \Ls^2_w(\mathbb{R})$ such that $y|_{[0,\infty)}=x$ holds. Then we have
$$
(T(t)_{/}x)(\xi)=(T(t)_{/}q(y))(\xi)=q(T_w(t)y)(\xi)=\frac{w(\cdot+\mu_{w}(\cdot,t))}{w(\cdot)}y(\cdot+\mu_{w}(\cdot,t))\Bigr|_{[0,\infty)}(\xi)
$$
for $\xi\in[0,\infty)$. But for $\xi\geqslant0$ and $t\geqslant0$ we see that $p_w(\xi)=p_{\lambda}(\xi)$ and thus
$$
\mu_w(\xi,t)=p_w^{-1}(p_w(\xi)+t)-\xi=p_{\lambda}^{-1}(p_{\lambda}(\xi)+t)-\xi=\mu_{\lambda}(\xi,t)
$$
holds. Moreover, we have $\xi+\mu_w(\xi,t)=p_w^{-1}(p_w(\xi)+t)\geqslant0$. Consequently, it follows that
$$
(T(t)_{/}x)(\xi)=\frac{\lambda(\xi+\mu_{\lambda}(\xi,t))}{\lambda(\xi)}x(\xi+\mu_{\lambda}(\xi,t))=(T_{\lambda}(t)x)(\xi)
$$
is true for all $x\in \Ls^2_{\lambda}(0,\infty)$ and $\xi\in[0,\infty)$. Next we compute
\begin{eqnarray*}
D(A_{/})\;=\;q(D(A_w)) & = & \bigl\{y|_{[0,\infty)}\:;\:x\in \Ls^2_{w}(\mathbb{R}),\;wy\in \AC(\mathbb{R}) \text{ and } (wy)'\in \Ls^2_{w}(\mathbb{R})\bigr\}\\
& = & \bigl\{x\in \Ls^2_{\lambda}(0,\infty)\:;\:\lambda{}x\in \AC[0,\infty) \text{ and } (\lambda{}x)'\in \Ls^2_{\lambda}(0,\infty)\bigr\}\\
& = & D(A_{\lambda})
\end{eqnarray*}
where we used that $\lambda{}x\in \AC[0,\infty)$ implies that we can extend $x$ to the whole axis for instance in a way that the extension $y$ belongs to $C^{\infty}(-\infty,0)$ and satisfies $y|_{(-\infty,-1]}\equiv0$. Finally for $x\in \Ls^2_{\lambda}(0,\infty)$ we select $y\in \Ls^2_{w}(0,\infty)$ with $y|_{[0,\infty)}=x$ and compute
$$
(A_{/}x)(\xi)=(A_{/}q(y))(\xi)=q(A_wy)(\xi)=\frac{\partial}{\partial\xi}(-wy)\Bigl|_{[0,\infty)}(\xi)=\frac{\partial}{\partial\xi}(-\lambda{}x)(\xi)=(A_{\lambda}x)(\xi)
$$
for $\xi\in[0,\infty)$. This finishes the proof.
\end{proof}

\smallskip

Next we treat the case of a negative sign, that is we consider a strictly negative and continuous function $\uptheta\colon[0,\infty)\rightarrow(-\infty,0)$ and work in the space $\Ls^2_{|\uptheta|}(0,\infty)$. When we use our results from the whole axis we will stick to the notation that we used most of the time until now, i.e., $w\colon\mathbb{R}\rightarrow(0,\infty)$ will denote a strictly positive function and we consider $A_{-w}$ and $(T_{-w}(t))_{t\in\mathbb{R}}$. We emphasize that the results of Lemma \ref{LEM-1} have to be updated when $p_{-w}$ and $\mu_{-w}$ are used. The auxiliary function $p_{-w}$ is now for instance decreasing, whereas $p_w$ was increasing. In the formula given in Proposition \ref{PROP-1} we can however simply replace $w$ with $-w$ and get the (semi)group generated by $A_{-w}x=-(wx)'$.

\smallskip

\begin{prop}\label{PROP-SEMI-MINUS} Let $\uptheta\colon[0,\infty)\rightarrow(-\infty,0)$ be continuous and  such that $\frac{1}{\uptheta}\not\in \Ls^1(0,\infty)$. The operator $A_{\uptheta}\colon D(A_{\uptheta})\rightarrow \Ls^2_{|\uptheta|}(0,\infty)$ given by
\begin{eqnarray*}
A_{\uptheta}x &= &(\uptheta x)'\\
D(A_{\uptheta}) &= &\bigl\{x\in \Ls^2_{|\uptheta|}(0,\infty)\:;\:\uptheta{}x\in \AC[0,\infty), \; (\uptheta{}x)'\in \Ls^2_{|\uptheta|}(0,\infty) \text{ and } (\uptheta{}x)(0)=0\bigr\}
\end{eqnarray*}
generates the $\Cnull$-semigroup $(T_{\uptheta}(t))_{t\geqslant0}$ given by
$$
(T_{\uptheta}(t)x)(\xi)=\begin{cases}
\;{\displaystyle\frac{\uptheta(\xi+\mu_{\uptheta}(\xi,t))}{\uptheta(\xi)}x(\xi+\mu_{\uptheta}(\xi,t))} & \text{ if } \xi+\mu_{\uptheta}(\xi,t)\geqslant0,\\
\;\hspace{65pt}0 & \text{ otherwise.}
\end{cases}
$$
Here, $\mu_{\uptheta}$ is defined as in Lemma \ref{LEM-1} but $\lambda$ replaced with $\uptheta$.
\end{prop}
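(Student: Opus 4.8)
The plan is to mirror the proof of Proposition \ref{PROP-SEMI-PLUS}, but to realize $\Ls^2_{|\uptheta|}(0,\infty)$ as an \emph{invariant subspace} of $\Ls^2_w(\mathbb{R})$ rather than as a quotient. First I extend $\uptheta$ to a weight on the whole axis: I pick a strictly positive continuous $w\colon\mathbb{R}\rightarrow(0,\infty)$ with $w|_{[0,\infty)}=|\uptheta|$ and $\frac1w|_{(-\infty,0)}\notin\Ls^1(-\infty,0)$, so that $-w|_{[0,\infty)}=\uptheta$. By Proposition \ref{PROP-1} the operator $A_{-w}$ generates the unitary $\Cnull$-group $(T_{-w}(t))_{t\in\mathbb{R}}$, whose formula is the one of Proposition \ref{PROP-1} with $w$ replaced by $-w$ and $\mu_w$ by $\mu_{-w}$. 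Extension by zero embeds $\Ls^2_{|\uptheta|}(0,\infty)$ isometrically onto the closed subspace $\mathcal{M}:=\{x\in\Ls^2_w(\mathbb{R})\:;\:x=0\text{ a.e.\ on }(-\infty,0)\}$, since $w=|\uptheta|$ on $[0,\infty)$.

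Second I check that $\mathcal{M}$ is $(T_{-w}(t))_{t\geqslant0}$-invariant. The point is that for $\xi<0$ and $t\geqslant0$ the source point satisfies $\xi+\mu_{-w}(\xi,t)=p_{-w}^{-1}(p_{-w}(\xi)+t)<0$: indeed $p_{-w}=-p_w$ is decreasing with $p_{-w}(0)=0$, so $p_{-w}(\xi)>0$ for $\xi<0$, whence $p_{-w}(\xi)+t>0$ and $p_{-w}^{-1}$ being decreasing forces the value below $p_{-w}^{-1}(0)=0$. Thus for $x\in\mathcal{M}$ and $\xi<0$ the quantity $(T_{-w}(t)x)(\xi)$ only samples $x$ at a point $<0$ and hence vanishes. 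With $\mathcal{M}$ invariant, \cite[I.5.12]{EN} yields that $(T_{-w}(t)|_{\mathcal{M}})_{t\geqslant0}$ is a $\Cnull$-semigroup on $\mathcal{M}$ whose generator is the part $(A_{-w})_|$ of $A_{-w}$ in $\mathcal{M}$.

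Third, and this is the crux, I identify $(A_{-w})_|$ with $A_{\uptheta}$. An element $x\in D(A_{-w})\cap\mathcal{M}$ has $wx\in\AC(\mathbb{R})$ with $wx\equiv0$ on $(-\infty,0)$; by continuity this forces $(wx)(0)=0$, hence $(wx)'=0$ a.e.\ on $(-\infty,0)$ and therefore $A_{-w}x=-(wx)'\in\mathcal{M}$ automatically. Restricting via $x\mapsto x|_{[0,\infty)}$ turns these requirements into $\uptheta x\in\AC[0,\infty)$, $(\uptheta x)'\in\Ls^2_{|\uptheta|}(0,\infty)$, and, precisely from the continuity of $wx=-\uptheta x$ across $0$, the boundary condition $(\uptheta x)(0)=0$; this is exactly $D(A_{\uptheta})$. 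Conversely any $x\in D(A_{\uptheta})$ extended by zero lies in $D(A_{-w})\cap\mathcal{M}$, the boundary condition guaranteeing that the extension has no jump at $0$ and hence stays locally absolutely continuous. Since $(A_{-w})_|$ then acts as $-(wx)'=(\uptheta x)'$, it corresponds to $A_{\uptheta}$ under the isometry, so $A_{\uptheta}$ generates the restricted semigroup.

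Finally I match the semigroup formulas. For $\xi\geqslant0$ one has $p_{-w}(\xi)=p_{\uptheta}(\xi)$, so whenever $\xi+\mu_{-w}(\xi,t)\geqslant0$ the argument $p_{-w}(\xi)+t$ lies in the range of $p_{\uptheta}$ and $\mu_{-w}(\xi,t)=\mu_{\uptheta}(\xi,t)$; together with $\uptheta(a)/\uptheta(\xi)=w(a)/w(\xi)$ for $a,\xi\geqslant0$ this reproduces the first branch of $T_{\uptheta}(t)$, while the case $\xi+\mu_{-w}(\xi,t)<0$ gives value $0$ and matches the second branch. The main obstacle is the domain identification in the third step: one has to see that passing to the part of the generator in the invariant subspace is exactly what produces the extra boundary condition $(\uptheta x)(0)=0$, and that this condition is equivalent to (not merely sufficient for) extendability by zero inside $D(A_{-w})$.
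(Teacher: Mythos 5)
Your proposal is correct and follows essentially the same route as the paper's own proof: extend $|\uptheta|$ to a positive weight $w$ on $\mathbb{R}$, invoke Proposition \ref{PROP-1} for $A_{-w}$ and $(T_{-w}(t))_{t\geqslant0}$, identify $\Ls^2_{|\uptheta|}(0,\infty)$ with the closed invariant subspace of functions vanishing on $(-\infty,0)$, and pass to the part of the generator via \cite[I.5.12]{EN}, with the boundary condition $(\uptheta x)(0)=0$ emerging exactly as you describe from continuity of $wx$ across $0$. Your explicit remark that $A_{-w}x\in\mathcal{M}$ is automatic for $x\in D(A_{-w})\cap\mathcal{M}$, and that the boundary condition is equivalent to (not merely sufficient for) zero-extendability within $D(A_{-w})$, makes precise a step the paper treats more tersely.
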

\begin{proof} We select a strictly positive and continuous function $w\colon\mathbb{R}\rightarrow(0,\infty)$ such that $\frac{1}{w}|_{(-\infty,0]}\not\in \Ls^1(-\infty,0)$ and $w|_{[0,\infty)}=-\uptheta$. Then we consider $\Ls^2_{|\uptheta|}(0,\infty)\subseteq \Ls^2_{w}(\mathbb{R})$ which is a closed subspace. Let $(T_{-w}(t))_{t\geqslant0}$ be the  $\Cnull$-semigroup from Proposition \ref{PROP-1}. We claim that $\Ls^2_{|\uptheta|}(0,\infty)$ is $(T_{-w}(t))_{t\geqslant0}$-invariant. Let $x\in \Ls^2_{|\uptheta|}(0,\infty)$ be given. We consider $x$ as an element of $\Ls^2_{w}(\mathbb{R})$ that is we have $x|_{(-\infty,0)}\equiv0$. For $\xi\leqslant0$ and $t\geqslant0$ we have $\xi\leqslant0\leqslant p_{-w}^{-1}(-t)$ which implies $p_{-w}(\xi)\geqslant-t$ and thus $p_{-w}(\xi)+t\geqslant0$ as $p_{-w}$ is decreasing. As $p^{-1}_{-w}$ is also decreasing, we get
$$ 
\xi+\mu_{-w}(\xi,t)=p_{-w}^{-1}(p_{-w}(\xi)+t)\leqslant p^{-1}_{-w}(0)=0
$$
and thus $x(\xi+\mu_{-w}(\xi,t))=0$. Recalling the formula of the semigroup we see that this implies $(T_{-w}(t)x)(\xi)=0$ which means $T_{-w}(t)x\in \Ls^2_{|\uptheta|}(0,\infty)$. By \cite[I.5.12]{EN}, $(T_{-w}(t)_|)_{t\geqslant0}$ generates a $\Cnull$-semigroup on $\Ls^2_{|\uptheta|}(0,\infty)$ whose generator is the part of $A_{-w}\colon D(A_{-w})\rightarrow \Ls^2_{w}(\mathbb{R})$ in $\Ls^2_{|\uptheta|}(0,\infty)$, i.e., $A_{-w|}x=A_{-w}x$ for
\begin{eqnarray*}
x\in D(A_{-w|}) & = & \bigl\{x\in D(A_{-w})\cap \Ls^2_{|\uptheta|}(0,\infty)\:;\:A_{-w}x\in \Ls^2_{|\uptheta|}(0,\infty)\bigr\}\\
& = & \bigl\{x \in \Ls^2_{|\uptheta|}(0,\infty)\:;\: wx\in \AC(\mathbb{R}),\; (wx)'\in \Ls^2_w(\mathbb{R}) \text{ and } (-wx)'\in \Ls^2_{|\uptheta|}(0,\infty)\bigr\}\\
& = & \bigl\{x \in \Ls^2_{|\uptheta|}(0,\infty)\:;\: \uptheta{}x\in \AC[0,\infty), \; (\uptheta{}x)'\in \Ls^2_{|\uptheta|}(0,\infty) \text{ and } (\uptheta{}x)(0)=0\bigr\}\\
& = & D(A_{\uptheta})
\end{eqnarray*}
where we used that $x\in \Ls^2_{|\uptheta|}(0,\infty)\subseteq \Ls^2_{w}(0,\infty)$ means $x|_{(-\infty,0)}$ is zero, that $wx\in \AC(\mathbb{R})$ and $\uptheta{}x\in \AC[0,\infty)$ are in particular continuous, and that on $[0,\infty)$ we have $-w=\uptheta$.

\smallskip

For $\xi\geqslant0$ we have $p_{-w}(\xi)=p_{\uptheta}(\xi)$ and for $\eta\leqslant0$ we have $p^{-1}_{-w}(\eta)=p^{-1}_{\uptheta}(\eta)$. Let now $\xi\geqslant0$ and $t\geqslant0$ be such that $p^{-1}_{-w}(p_{-w}(\xi)+t)=\xi+\mu_{-w}(\xi,t)\geqslant0$. Since $p_{-w}^{-1}$ is decreasing, this means $p_{\uptheta}(\xi)+t=p_{-w}(\xi)+t\leqslant0$ and consequently we proved that
$$
\xi+\mu_{\uptheta}(\xi,t)=p_{\uptheta}^{-1}(p_{\uptheta}(\xi)+t)=p_{-w}^{-1}(p_{-w}(\xi)+t)=\xi+\mu_{-w}(\xi,t)
$$
is valid for all $\xi\geqslant0$ and $t\geqslant0$ such that $\mu_{-w}(\xi,t)\geqslant0$. We thus get
\begin{eqnarray*}
(T_{-w}(t)_|x)(\xi) & = & \frac{w(\xi+\mu_{-w}(\xi,t))}{w(\xi)}x(\xi+\mu_{-w}(\xi,t))\\
& = & \left\{\begin{array}{ll}
{\displaystyle\frac{\uptheta(\xi+\mu_{\uptheta}(\xi,t))}{\uptheta(\xi)}x(\xi+\mu_{\uptheta}(\xi,t))} & \text{ if } \xi+\mu_{\uptheta}(\xi,t)\geqslant0\\
\hspace{65pt}0 & \text{ otherwise}        \end{array}\right\} = (T_{\uptheta}(t)x)(\xi) 
\end{eqnarray*}
for each $x\in \Ls^2_{|\uptheta|}(0,\infty)$. This finishes the proof.
\end{proof}

The following lemma will enable us to make the results of Proposition \ref{PROP-SEMI-PLUS} and Proposition \ref{PROP-SEMI-MINUS} vector-valued and to combine the two generators in one operator that multiplies in some coordinates with a positive and in others with a negative function. The proof is straightforward.

\smallskip

\begin{lem}\label{SIMPLE-LEM} For $k=1,\dots,n$ let $A_k\colon D(A_k)\rightarrow H_k$ be generators of $\Cnull$-semigroups $(T_k(t))_{t\geqslant0}$ on Banach spaces $H_k$. Then, $A:=\diag(A_1,\dots,A_n)\colon D(A):=D(A_1)\oplus\cdots\oplus D(A_n)\rightarrow H_1\oplus\cdots\oplus H_n=:H$ generates the $\Cnull$-semigroup $\T$ given by $T(t):=\diag(T_1(t),\dots,T_n(t))$ for $t\geqslant0$ on $H$.\hfill\qed
\end{lem}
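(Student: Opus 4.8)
The plan is to reduce the claim about the diagonal operator to the finite-dimensional block structure and apply a standard criterion from semigroup theory. First I would verify the obvious candidate: for $t\geqslant0$ the operator $T(t)=\diag(T_1(t),\dots,T_n(t))$ is bounded on the product Banach space $H=H_1\oplus\cdots\oplus H_n$, because each $T_k(t)\in L(H_k)$ and a finite direct sum of bounded operators is bounded. The semigroup laws $T(0)=\id$ and $T(t+s)=T(t)T(s)$ are then inherited coordinatewise, since $\diag(T_1(t),\dots)\diag(T_1(s),\dots)=\diag(T_1(t)T_1(s),\dots)$ and each factor satisfies the evolution property by assumption. Strong continuity also reduces to the coordinates: for $x=(x_1,\dots,x_n)\in H$ one has $\|T(t)x-x\|_H^2=\sum_{k=1}^n\|T_k(t)x_k-x_k\|_{H_k}^2$ (or the analogous sum for a general product norm), and each summand tends to zero as $t\searrow0$ by strong continuity of $(T_k(t))_{t\geqslant0}$, so the finite sum does too. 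Hence $\T$ is a $\Cnull$-semigroup on $H$.

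Next I would identify its generator. Let $B\colon D(B)\to H$ be the generator of $\T$; I want to show $B=A=\diag(A_1,\dots,A_n)$ with the stated domain. The clean way is to compute the generator directly from the limit $Bx=\lim_{t\searrow0}\tfrac1t(T(t)x-x)$. Because the norm on $H$ controls each coordinate, this limit exists in $H$ if and only if it exists in each $H_k$, and in that case the $k$-th component of the limit is exactly $\lim_{t\searrow0}\tfrac1t(T_k(t)x_k-x_k)=A_kx_k$. Therefore $x\in D(B)$ precisely when $x_k\in D(A_k)$ for every $k$, i.e. $D(B)=D(A_1)\oplus\cdots\oplus D(A_n)=D(A)$, and on this domain $Bx=(A_1x_1,\dots,A_nx_n)=Ax$. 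This establishes $B=A$ and completes the proof.

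I do not expect any genuine obstacle here; the statement is essentially bookkeeping about finite direct sums, and the authors themselves flag the proof as straightforward. The only point that requires a word of care is the compatibility of the product norm with the coordinate projections: one must note that on a finite direct sum the canonical norm (or any equivalent product norm) satisfies $\max_k\|x_k\|_{H_k}\leqslant\|x\|_H\leqslant C\sum_k\|x_k\|_{H_k}$, so that convergence in $H$ is equivalent to coordinatewise convergence and the difference-quotient limits can be computed blockwise. With this remark the reductions of boundedness, the semigroup laws, strong continuity, and the identification of the generator all go through coordinatewise without further subtlety. An alternative, equally short route is to invoke a direct-sum stability result for generators from \cite{EN}, but the hands-on computation of $B$ via the defining limit is self-contained and avoids citing additional machinery.
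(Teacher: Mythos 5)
Your proposal is correct: the paper itself gives no proof of this lemma (it merely declares the proof ``straightforward'' and closes the statement with a \qed), and your coordinatewise verification --- boundedness, the semigroup laws, strong continuity, and the identification of the generator via the difference quotient, all reduced to the blocks using the equivalence of convergence in $H$ with coordinatewise convergence on a finite direct sum --- is exactly the standard argument the authors intend. No gaps; the key observation that the product norm controls each coordinate (so the limit $\lim_{t\searrow0}\tfrac1t(T(t)x-x)$ exists in $H$ if and only if each coordinate limit exists in $H_k$) is precisely what makes the domain identification $D(B)=D(A_1)\oplus\cdots\oplus D(A_n)$ go through.
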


\smallskip

Applying Lemma \ref{SIMPLE-LEM} to the generation results that we established so far gives immediately the following.

\smallskip

\begin{prop}\label{PROP-DIAG} Let $n_+,n_-\geqslant1$ be integers and let $n:=n_++n_-$. Let $\Delta\colon[0,\infty)\rightarrow\mathbb{R}^{n\times n}$ be given by $\Delta=\diag(\Lambda,\Theta)$ where $\Lambda\colon[0,\infty)\rightarrow\mathbb{R}^{n_+\times n_+}$ and $\Theta\colon[0,\infty)\rightarrow\mathbb{R}^{n_-\times n_-}$ are given by $\Lambda=\diag(\lambda_1\dots,\lambda_{n_+})$, $\Theta=\diag(\uptheta_1,\dots,\uptheta_{n_-})$ where $\lambda_1,\dots,\lambda_{n_+}\colon[0,\infty)\rightarrow(0,\infty)$ and $\uptheta_1,\dots,\uptheta_{n_-}\colon[0,\infty)\rightarrow(-\infty,0)$ are continuous and $\frac{1}{\lambda_k},\frac{1}{\uptheta_j}\not\in \Ls^1(0,\infty)$ holds for $k=1,\dots,n_+$ and $j=1,\dots,n_-$. Then the operator $A_{\Delta}\colon D(A_{\Delta})\rightarrow \Ls^2_{|\Delta|}(0,\infty)$ given by
\begin{eqnarray*}
A_{\Delta}x &= &(\Delta{}x)'\\
D(A_{\Delta}) &= &\bigl\{x\in \Ls^2_{|\Delta|}(0,\infty)\:;\:\Delta x\in \AC[0,\infty),\;\Delta{}x\in \Ls_{|\Delta|}^2(0,\infty)\text{ and }(\Theta{}x)(0)=0\bigr\}
\end{eqnarray*}
generates a $\Cnull$-semigroup on 
$$
\Ls^2_{|\Delta|}(0,\infty)=\bigl\{x\colon[0,\infty)\rightarrow\mathbb{C}^n\:;\: x \text{ measurable and } \|x\|_{\Ls^2_{|\Delta|}(0,\infty)}^2:=\int_0^{\infty}x(\xi)^{\star}\!|\Delta|x(\xi)\dd\xi<\infty\bigr\}.
$$
with $|\Delta|=\diag(\lambda_1,\dots,\lambda_{n_+},|\uptheta_1|,\dots,|\uptheta_{n_-}|)$.
\end{prop}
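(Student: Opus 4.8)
The plan is to read off $A_{\Delta}$ as the diagonal assembly of the scalar generators already produced in Proposition \ref{PROP-SEMI-PLUS} and Proposition \ref{PROP-SEMI-MINUS}, and then to close the argument with a single appeal to Lemma \ref{SIMPLE-LEM}. Nothing genuinely new has to be constructed; the entire task is to make the three identifications (of the Hilbert space, of the domain, and of the action of the operator) precise and to see that they line up coordinatewise.

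First I would identify the Hilbert space. Writing $x=(x_1,\dots,x_n)$ and using $|\Delta|=\diag(\lambda_1,\dots,\lambda_{n_+},|\uptheta_1|,\dots,|\uptheta_{n_-}|)$, the weighted scalar product splits as a sum over coordinates, since $\int_0^{\infty}x(\xi)^{\star}|\Delta|y(\xi)\dd\xi=\sum_{k=1}^{n_+}\langle x_k,y_k\rangle_{\Ls^2_{\lambda_k}(0,\infty)}+\sum_{j=1}^{n_-}\langle x_{n_++j},y_{n_++j}\rangle_{\Ls^2_{|\uptheta_j|}(0,\infty)}$. This shows that
\[
\Ls^2_{|\Delta|}(0,\infty)\;\cong\;\Bigosum{k=1}{n_+}\Ls^2_{\lambda_k}(0,\infty)\;\oplus\;\Bigosum{j=1}{n_-}\Ls^2_{|\uptheta_j|}(0,\infty)
\]
as an orthogonal direct sum of Hilbert spaces. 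Next I would decompose the operator. Because $\Delta$ is diagonal, the requirement $\Delta x\in\AC[0,\infty)$ is equivalent to $\lambda_k x_k\in\AC[0,\infty)$ and $\uptheta_j x_{n_++j}\in\AC[0,\infty)$ for all indices, and $(\Delta x)'\in\Ls^2_{|\Delta|}(0,\infty)$ is equivalent to the corresponding scalar memberships $(\lambda_k x_k)'\in\Ls^2_{\lambda_k}(0,\infty)$ and $(\uptheta_j x_{n_++j})'\in\Ls^2_{|\uptheta_j|}(0,\infty)$. The boundary constraint $(\Theta x)(0)=0$ reads, coordinate by coordinate, exactly as $(\uptheta_j x_{n_++j})(0)=0$ for $j=1,\dots,n_-$, while no condition is imposed on the first $n_+$ coordinates. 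Matching these against the domains in Proposition \ref{PROP-SEMI-PLUS} and Proposition \ref{PROP-SEMI-MINUS} gives
\[
D(A_{\Delta})=\Bigosum{k=1}{n_+}D(A_{\lambda_k})\;\oplus\;\Bigosum{j=1}{n_-}D(A_{\uptheta_j}),
\]
and on this domain $A_{\Delta}=\diag(A_{\lambda_1},\dots,A_{\lambda_{n_+}},A_{\uptheta_1},\dots,A_{\uptheta_{n_-}})$ acts componentwise.

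Finally, each $A_{\lambda_k}$ generates a $\Cnull$-semigroup $(T_{\lambda_k}(t))_{t\geqslant0}$ by Proposition \ref{PROP-SEMI-PLUS}, and each $A_{\uptheta_j}$ generates one by Proposition \ref{PROP-SEMI-MINUS}; Lemma \ref{SIMPLE-LEM} then yields at once that $A_{\Delta}$ generates the $\Cnull$-semigroup $\diag(T_{\lambda_1}(t),\dots,T_{\lambda_{n_+}}(t),T_{\uptheta_1}(t),\dots,T_{\uptheta_{n_-}}(t))$ on $\Ls^2_{|\Delta|}(0,\infty)$. I expect no real obstacle beyond careful bookkeeping: the one point deserving attention is checking that the single matrix equation $(\Theta x)(0)=0$ encodes precisely the $n_-$ boundary conditions of the negative components while leaving the $n_+$ positive components unconstrained, which holds because $\Theta$ is exactly the lower diagonal block of $\Delta$.
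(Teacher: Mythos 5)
Your proposal is correct and follows exactly the paper's own route: decompose $\Ls^2_{|\Delta|}(0,\infty)$ and $D(A_{\Delta})$ coordinatewise into the scalar weighted spaces and domains, invoke Proposition \ref{PROP-SEMI-PLUS} for the $\lambda_k$'s and Proposition \ref{PROP-SEMI-MINUS} for the $\uptheta_j$'s (the latter carrying the boundary conditions $(\uptheta_j x)(0)=0$ encoded in $(\Theta x)(0)=0$), and conclude with Lemma \ref{SIMPLE-LEM}. The paper's proof is just a terser statement of the same identifications you spell out.
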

\begin{proof} It is enough to apply Lemma \ref{SIMPLE-LEM} to the operators
$$
A_{\lambda_k}\colon D(A_{\lambda_k})\rightarrow \Ls^2_{\lambda_k}(0,\infty)\,\text{ and }\,A_{\uptheta_j}\colon D(A_{-\uptheta_j})\rightarrow \Ls^2_{|\uptheta_j|}(0,\infty)
$$
for $k=1,\dots,n_+$ and $j=1,\dots,n_-$. The latter are generators according to Proposition \ref{PROP-SEMI-PLUS} and Proposition \ref{PROP-SEMI-MINUS} if we use the corresponding domains, i.e., $D(A_{\lambda_k})$ includes no boundary condition and $D(A_{\uptheta_j})$ includes the boundary condition $(\uptheta_jx)(0)=0$. The result follows since
$$
\Ls^2_{|\Delta|}(0,\infty)=\Bigosum{k=1}{n_+}\Ls^2_{\lambda_k}(0,\infty)\oplus\Bigosum{j=1}{n_-}\Ls^2_{\uptheta_j}(0,\infty)\;\text{ and }\;D(A_{\Delta})=\Bigosum{k=1}{n_+}D(A_{\lambda_k})\oplus\Bigosum{j=1}{n_-}D(A_{\uptheta_j})
$$
hold.
\end{proof}

\smallskip

Our next aim is to classify all linear boundary conditions for which $A_{\Delta}$ as in Proposition \ref{PROP-DIAG} is a generator. In order to achieve this, we make use of the terminology of boundary control systems that we reviewed for this purpose in Section \ref{INTERLUDE}. Firstly, we put the operator of Proposition \ref{PROP-DIAG} in the new context and establish in Lemma \ref{BCS-LEM} that $A_{\Delta}$, together with suitable input and output, gives rise to a boundary control system. Secondly, we show that this system is well-posed and compute its transfer function in Lemma \ref{LEM-BCS-WELLPOSED}. Then we are able to apply the Weiss theorem to obtain the generation result of Proposition \ref{WEISS}.

\smallskip

\begin{lem}\label{BCS-LEM} In the situation of Proposition \ref{PROP-DIAG} let $\mathcal{A}\colon D(\mathcal{A})\rightarrow \Ls^2_{|\Delta|}(0,\infty)$, $\mathcal{B}\colon D(\mathcal{B})\rightarrow\mathbb{C}^{n_-}$ and $\mathcal{C}\colon D(\mathcal{C})\rightarrow\mathbb{C}^{n_+}$ be given by
$$
\mathcal{A}x=(\Delta{}x)',\;\mathcal{B}x=(\Theta{}x_-)(0) \,\text{ and }\,\mathcal{C}x=(\Lambda{}x_+)(0)
$$
for $x\in D(\mathcal{A})=D(\mathcal{B})=D(\mathcal{C})=\{x\in \Ls^2_{|\Delta|}(0,\infty)\:;\:\mathcal{H}x\in \AC[0,\infty) \text { and } (\Delta x)'\in \Ls^2_{|\Delta|}(0,\infty)\}$. Then,
$$
(\Sigma)\;\;\begin{cases}
\;\dot{x}(t)=\mathcal{A}x(t)\\
\;u(t)=\mathcal{B}x(t)\\
\;y(t)=\mathcal{C}x(t)
\end{cases}
$$
is a boundary control system.
\end{lem}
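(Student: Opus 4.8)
The plan is to verify the four conditions of Definition \ref{JZ-ASS} in turn, the substantial work being confined to conditions (iii) and (iv). For condition (i), the space $X=\Ls^2_{|\Delta|}(0,\infty)$ is a Hilbert space by Proposition \ref{PROP-DIAG}, while $U=\mathbb{C}^{n_-}$ and $Y=\mathbb{C}^{n_+}$ are finite-dimensional Hilbert spaces; the operators $\mathcal{A}$, $\mathcal{B}$, $\mathcal{C}$ are linear since differentiation, multiplication by the fixed matrix functions $\Delta$, $\Lambda$, $\Theta$, and evaluation at $0$ are linear, and the traces $(\Theta x_-)(0)$, $(\Lambda x_+)(0)$ are meaningful because $\Delta x\in\AC[0,\infty)$ forces each component of $\Delta x$ to have a continuous representative. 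The inclusion $D(\mathcal{A})\subseteq D(\mathcal{B})$ is trivial as the three domains coincide. For condition (ii), I would note that $\ker(\mathcal{B})=\{x\in D(\mathcal{B})\,;\,(\Theta x_-)(0)=0\}$, so that $D(\mathcal{A})\cap\ker(\mathcal{B})$ is precisely the domain $D(A_{\Delta})$ of Proposition \ref{PROP-DIAG} and $\mathcal{A}x=(\Delta x)'=A_{\Delta}x$ there; hence the operator $A$ required in Definition \ref{JZ-ASS}(ii) equals $A_{\Delta}$, which generates a $\Cnull$-semigroup by Proposition \ref{PROP-DIAG}.

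For condition (iii) I would construct the lift $B$ explicitly. Fix $\phi\in\operatorname{C}^\infty[0,\infty)$ with $\phi(0)=1$ and $\supp\phi\subseteq[0,1]$, and define $B\colon\mathbb{C}^{n_-}\to X$ by letting the positive block of $Bu$ vanish and the negative block equal $\phi\,\Theta^{-1}u$, so that $\Delta(Bu)=(0,\phi u)$. Then $Bu\in\Ls^2_{|\Delta|}(0,\infty)$ because $\phi\,\Theta^{-1}u$ is continuous with compact support, $\Delta(Bu)=(0,\phi u)\in\AC[0,\infty)$, and $(\Delta(Bu))'=(0,\phi'u)\in\Ls^2_{|\Delta|}(0,\infty)$, whence $Bu\in D(\mathcal{A})$; moreover $\mathcal{B}Bu=(\Theta(Bu)_-)(0)=\phi(0)u=u$. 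Boundedness of $B$ and of $\mathcal{A}B$ is automatic since the domain $U$ is finite-dimensional.

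The main obstacle is condition (iv): showing that $\mathcal{C}\colon D(A)\to\mathbb{C}^{n_+}$ is bounded for the graph norm, the difficulty being that the boundary trace must be controlled by $\|x\|_X+\|Ax\|_X$ even though, componentwise, $\Delta x$ and $(\Delta x)'$ are square-integrable against the reciprocal weights $1/\lambda_k$ and $\lambda_k$. Writing $g$ for a positive component $\lambda_k(x_+)_k$ of $\Delta x$, so that $g\in\Ls^2_{1/\lambda_k}(0,\infty)$ with $\|g\|_{\Ls^2_{1/\lambda_k}}\leqslant\|x\|_X$ and $g'\in\Ls^2_{\lambda_k}(0,\infty)$ with $\|g'\|_{\Ls^2_{\lambda_k}}\leqslant\|Ax\|_X$, I would use the same cut-off $\phi$ together with Lemma \ref{AC-LEM} to write $g(0)=-\int_0^1(\phi'g+\phi g')\,\dd\xi$, and then estimate each summand by Cauchy--Schwarz after the weight split $|\phi'g|=(|\phi'|\lambda_k^{1/2})(|g|\lambda_k^{-1/2})$ and $|\phi g'|=(|\phi|\lambda_k^{-1/2})(|g'|\lambda_k^{1/2})$. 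The factors $\int_0^1|\phi'|^2\lambda_k\,\dd\xi$ and $\int_0^1|\phi|^2\lambda_k^{-1}\,\dd\xi$ are finite because $\lambda_k$ and $1/\lambda_k$ are continuous on the compact interval $[0,1]$, while the remaining factors are dominated by $\|x\|_X$ and $\|Ax\|_X$. Summing over the $n_+$ positive components yields $|\mathcal{C}x|\leqslant C(\|x\|_X+\|Ax\|_X)$, the required graph-norm bound, which completes the verification that $(\Sigma)$ is a boundary control system.
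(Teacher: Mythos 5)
Your proposal is correct and follows essentially the same route as the paper: condition (ii) is obtained from Proposition \ref{PROP-DIAG}, and condition (iii) uses the identical lift $B$ built from a smooth, compactly supported cut-off $\varphi$ with $\varphi(0)=1$, whose negative block is $\varphi\,\Theta^{-1}u$ (so that $\mathcal{B}Bu=u$), with boundedness of $B$ and $\mathcal{A}B$ automatic from $\dim U<\infty$. The only difference is that the paper simply asserts the graph-norm continuity of $\mathcal{C}$ in condition (iv), whereas you actually prove it via the weighted trace estimate $g(0)=-\int_0^1(\varphi'g+\varphi g')\,\dd\xi$ with the Cauchy--Schwarz weight splitting $\lambda_k^{1/2}\cdot\lambda_k^{-1/2}$ --- a correct and welcome addition of detail, valid because $\lambda_k$ and $1/\lambda_k$ are continuous, hence bounded, on $[0,1]$.
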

\begin{proof} By Proposition \ref{PROP-DIAG}, $\mathcal{A}|_{\operatorname{ker}\mathcal{B}}\colon D(\mathcal{A})\cap\operatorname{ker}\mathcal{B}\rightarrow \Ls^2_{|\Delta|}(0,\infty)$ generates a $\Cnull$-semigroup. We select $\varphi\in C^{\infty}[0,\infty)$ with compact support and such that $\varphi(0)=1$. Then we define
$$
B \colon \mathbb{C}^{n_-}\rightarrow \Ls^2_{|\Delta|}(0,\infty),\;\;(Bu)(\xi)=\bigl[0\;\cdots\;0\;\;\frac{\varphi(\xi)}{\uptheta_1(\xi)}u_1\;\cdots\;\frac{\varphi(\xi)}{\uptheta_{n_-}(\xi)}u_{n_-}\bigr]^T
$$
which is a linear operator with values in $D(\mathcal{A})$ that satisfies
$$
\mathcal{B}Bu=(\Theta(Bu)_-)(0)=\bigl[\uptheta_1(0)\frac{\varphi(0)}{\uptheta_1(0)}u_1\;\cdots\;\uptheta_1(0)\frac{\varphi(0)}{\uptheta_1(0)}u_{n_-}\bigr]^T=[u_1\;\cdots\;u_{n_-}]=u
$$
for each $u\in\mathbb{C}^{n-}$. Moreover, $\mathcal{A}B\colon\mathbb{C}^{n_-}\rightarrow \Ls^2_{|\Delta|}(0,\infty)$ and $\mathcal{C}\colon D(\mathcal{C})\rightarrow\mathcal{C}^{n_-}$ are linear and continuous.
\end{proof}

\smallskip

In the lemma below we establish that the boundary control system from Lemma \ref{BCS-LEM} is well-posed and has zero transfer function. As we explained in Section \ref{INTERLUDE} this is crucial for the application of Theorem \ref{WEISS-THM} which then finally will give us the classification of all linear boundary conditions that make $A_{\Delta}$ a generator. For the result we need the additional assumption that $\Delta$ is bounded. Notice that $\frac{1}{\lambda_k}$, $\frac{1}{\theta_j}\not\in \Ls^1(0,\infty)$ follows automatically.

\smallskip

\begin{lem}\label{LEM-BCS-WELLPOSED} In the situation of Proposition \ref{PROP-DIAG} assume that $\Delta\colon[0,\infty)\rightarrow\mathbb{R}^{n\times n}$ is bounded. Then the boundary control system considered in Lemma \ref{BCS-LEM} is well-posed and its transfer function is zero.
\end{lem}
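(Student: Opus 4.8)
The plan is to verify well-posedness through the energy-balance criterion of Proposition \ref{PROP-WP} and to compute the transfer function via the characterisation in Proposition \ref{JZ-PROP-2}. For the first part, let $x=x(t)$ be a classical solution of $(\Sigma)$, so that $x(t)\in D(\mathcal{A})$ for all $t\geqslant0$ and $\dot x(t)=\mathcal{A}x(t)=(\Delta x(t))'$. Since the weighted inner product is bounded and bilinear, I would differentiate to obtain $\frac{\dd}{\dd t}\|x(t)\|_{\Ls^2_{|\Delta|}(0,\infty)}^2=2\Re\langle\mathcal{A}x(t),x(t)\rangle_{\Ls^2_{|\Delta|}(0,\infty)}$ and then show that the right-hand side equals $\|u(t)\|^2-\|y(t)\|^2$. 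Because $\Delta=\diag(\Lambda,\Theta)$ and both the space and the operator split as direct sums (cf.~Proposition \ref{PROP-DIAG}), it suffices to treat the scalar blocks separately.

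For a positive block, writing $y_k:=\lambda_k x_{+,k}$ and using $|\Delta|=\Lambda$ there, one computes $2\Re\langle(\lambda_k x_{+,k})',x_{+,k}\rangle_{\Ls^2_{\lambda_k}(0,\infty)}=\int_0^\infty\frac{\dd}{\dd\xi}|y_k(\xi)|^2\dd\xi=\lim_{\xi\to\infty}|y_k(\xi)|^2-|y_k(0)|^2$. For a negative block, writing $z_j:=\uptheta_j x_{-,j}$ and using $|\uptheta_j|x_{-,j}=-z_j$, the analogous computation yields $|z_j(0)|^2-\lim_{\xi\to\infty}|z_j(\xi)|^2$. The crucial point is that the limits at infinity vanish: since $\Delta$ is bounded, each weight $\lambda_k$, $|\uptheta_j|$ is bounded, so Lemma \ref{BAR-LEM} (whose proof applies verbatim on the half-line, restricting to $\xi,\eta\geqslant0$) gives $\lim_{\xi\to\infty}(\lambda_k x_{+,k})(\xi)=0$ and $\lim_{\xi\to\infty}(\uptheta_j x_{-,j})(\xi)=0$. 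Summing over the blocks then gives $2\Re\langle\mathcal{A}x,x\rangle=\sum_j|z_j(0)|^2-\sum_k|y_k(0)|^2=\|(\Theta x_-)(0)\|^2-\|(\Lambda x_+)(0)\|^2=\|u\|^2-\|y\|^2$, and Proposition \ref{PROP-WP} completes the proof of well-posedness.

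For the transfer function, I would invoke Proposition \ref{JZ-PROP-2}: for $s$ in a right half-plane contained in $\rho(A)$ and $u_0\in\mathbb{C}^{n_-}$, the value $G(s)u_0=\mathcal{C}x_0=(\Lambda x_{0,+})(0)$ is read off from the unique $x_0\in D(\mathcal{A})$ solving $sx_0=(\Delta x_0)'$ with $(\Theta x_{0,-})(0)=u_0$. Solving the decoupled equations, the positive components satisfy $y_k=\lambda_k x_{0,+,k}=y_k(0)\,\e^{s\,p_{\lambda_k}(\xi)}$ with $p_{\lambda_k}$ as in Lemma \ref{LEM-1}; substituting $u=p_{\lambda_k}(\xi)$, $\dd u=\lambda_k^{-1}\dd\xi$ shows $\int_0^\infty\lambda_k^{-1}|y_k|^2\dd\xi=|y_k(0)|^2\int_0^\infty\e^{2\Re(s)u}\dd u$, which for $\Re(s)>0$ diverges unless $y_k(0)=0$. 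Hence $x_0\in\Ls^2_{|\Delta|}(0,\infty)$ forces $(\Lambda x_{0,+})(0)=0$, so $G(s)u_0=0$; meanwhile the negative components, governed by the decreasing $p_{\uptheta_j}$, absorb the datum $u_0$ and remain square-integrable, so the required $x_0$ genuinely exists. Therefore $G\equiv0$.

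I expect the main obstacle to be the control of the boundary terms at $+\infty$: the computation collapses to the clean identity $\frac{\dd}{\dd t}\|x\|^2=\|u\|^2-\|y\|^2$ precisely because the limits $\lim_{\xi\to\infty}|(\Delta x)(\xi)|^2$ vanish, and this is exactly where the boundedness hypothesis on $\Delta$ enters through Lemma \ref{BAR-LEM}; without it one would only conclude that $\Delta x$ is bounded, not that it vanishes. A secondary point requiring care is checking that the candidate $x_0$ in the transfer-function argument indeed lies in $D(\mathcal{A})$ (the exponentials are locally absolutely continuous and $(\Delta x_0)'=sx_0\in\Ls^2_{|\Delta|}(0,\infty)$), and that differentiating $\|x(t)\|^2$ in $t$ is legitimate for classical solutions.
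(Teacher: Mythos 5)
Your proposal is correct and follows essentially the same route as the paper: the energy identity $\frac{\dd}{\dd t}\|x(t)\|^2_{\Ls^2_{|\Delta|}(0,\infty)}=\|u(t)\|^2-\|y(t)\|^2$, with the boundary terms at $+\infty$ killed by the bounded-weight case of Lemma \ref{BAR-LEM} and well-posedness concluded from Proposition \ref{PROP-WP}, followed by the computation of $G(s)$ via Proposition \ref{JZ-PROP-2}, where square-integrability forces the forward (positive) components of the exponential solution to vanish while the backward (negative) components carry $u_0$. The only cosmetic differences are that you phrase the time derivative as $2\Re\langle\mathcal{A}x,x\rangle$ blockwise rather than as limits of integrals over $[0,R]$, and you rule out the positive components by a direct divergent-integral computation where the paper cites Lemma \ref{BAR-LEM}; both are sound.
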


\begin{proof} 1.~Let $x$ be a classical solution of the boundary control system. We write $x=[x_+\;\;x_-]^T$ and use this notation also later in this proof. The second part of Lemma \ref{BAR-LEM} enables us to compute
\begin{eqnarray*}
\frac{\dd}{\dd t}\|x(t)\|^2_{\Ls^2_{|\Delta|}(0,\infty)} &=& \lim_{R\rightarrow\infty}\Bigl(\frac{\partial}{\partial t}\int_0^R x_+(\xi,t)^{\star}\Lambda(\xi)x_+(\xi,t)\dd\xi + \frac{\partial}{\partial t}\int_0^R x_-(\xi,t)^{\star}(-\Theta(\xi))x_-(\xi,t)\dd\xi\Bigr)\\
& = & \lim_{R\rightarrow\infty}\Bigl((\Lambda(\xi)x_+(\xi,t))^{\star}\Lambda(\xi)x_+(\xi,t)\Bigr|_0^R-(\Theta(\xi)x_-(\xi,t))^{\star}\Theta(\xi)x_-(\xi,t)\Bigr|_0^R\Bigr)\\
& = & \lim_{R\rightarrow\infty}\Bigl( \|\Lambda(R)x_+(R,t)\|_{\mathbb{C}^{n_+}}^2-\|\Lambda(0)x_+(0,t)\|^2_{\mathbb{C}^{n_+}}\phantom{\int}\\
& & \phantom{+++++++}-\|\Theta(R)x_-(R,t)\|^2_{\mathbb{C}^{n_-}}+\|\Theta(0)x_-(0,t)\|^2_{\mathbb{C}^{n_-}}\Bigr)\\
& = & \|\Theta(0)x_-(0,t)\|^2_{\mathbb{C}^{n_-}}-\|\Lambda(0)x_+(0,t)\|^2_{\mathbb{C}^{n_+}}\;\;=\;\; \|u(t)\|^2_{\mathbb{C}^{n_-}}-\|y(t)\|^2_{\mathbb{C}^{n_+}}\phantom{\int}
\end{eqnarray*}
which yields the well-posedness by \cite[Proposition 13.1.4]{JZ}.

\smallskip

2.~The equation in \eqref{DFN-TRANS} read in our situation as follows
\begin{equation}\label{EQ-TRANS}
sx_0=\mathcal{A}x_0=\begin{bmatrix}(\Lambda x_+^{(0)})'\\(\Theta x_-^{(0)})'\end{bmatrix},\;\;u_0=\mathcal{B}x_0=\Theta x_-^{(0)}(0),\;\;G(s)u_0=\mathcal{C}x_0=(\Lambda x_+^{(0)})(0)
\end{equation}
where $x_0=[x_+^{(0)}\;\;\;x_-^{(0)}]^{T}\in D(\mathcal{A})$. The general solution of the first equation in \eqref{EQ-TRANS} is
$$
x_0(\xi)=\begin{bmatrix}\alpha\cdot\Lambda(\xi)^{-1}e^{s\int_0^{\xi}\Lambda(\zeta)^{-1}\dd\zeta}\\\beta\cdot\Theta(\xi)^{-1}e^{s\int_0^{\xi}\Theta(\zeta)^{-1}\dd\zeta}\end{bmatrix}.
$$
Remembering $\Lambda=\diag(\lambda_1,\dots,\lambda_{n_+})$, we see that
$$
\Lambda(\xi)^{-1}e^{s\int_0^{\xi}\Lambda(\zeta)^{-1}\dd\zeta}=\begin{bmatrix}\lambda_1(\xi)^{-1}e^{s\int_0^{\xi}\lambda_1(\zeta)^{-1}\dd\zeta}\vspace{-5pt}\\\vdots\\\lambda_{n_+}(\xi)^{-1}e^{s\int_0^{\xi}\lambda_{n_+}(\zeta)^{-1}\dd\zeta}\end{bmatrix}
$$
holds. For every $k=1,\dots,n_+$ and $\Re s>0$ we have
$$
\lim_{\xi\rightarrow\infty}\lambda_k(\xi)\bigl|\lambda_k(\xi)^{-1}e^{s\int_0^{\xi}\lambda_1(\zeta)^{-1}\dd\zeta}\bigr|= \lim_{\xi\rightarrow\infty}e^{\Re s\int_0^{\xi}\lambda_k(\zeta)^{-1}\dd\zeta}=\infty
$$
as $\frac{1}{\lambda_k}\not\in \Ls^1(0,\infty)$ holds by assumption. As $x_0\in \Ls^2_{|\Delta|}(0,\infty)$ holds, we have necessarily $\alpha=0$ in view of Lemma \ref{BAR-LEM}. On the other hand, we claim that
$$
\|x_-^{(0)}\|_{\Ls^2_{\Lambda}(0,\infty)}^2=\int_0^{\infty}|\Theta(\xi)|\bigl|\Theta(\xi)^{-1} e^{s\cdot\int_0^{\xi}\Theta(\zeta)^{-1}\dd\zeta}\bigr|^2\dd\xi=\sum_{k=1}^{n_-}\int_0^{\infty}\bigl|\uptheta_k(\xi)^{-1}\bigr|\bigl|e^{s\int_0^{\xi}\uptheta_k(\zeta)^{-1}\dd\zeta}\bigr|^2\dd\xi<\infty
$$
holds. For $j=1,\dots,n_-$ we compute
\begin{eqnarray*}
\int_0^{\infty}\bigl|\uptheta_k(\xi)^{-1}\bigr|\bigl|e^{s\int_0^{\xi}\uptheta_j(\zeta)^{-1}\dd\zeta}\bigr|^2\dd\xi & = & -\lim_{R\rightarrow\infty}\int_0^R\uptheta_k(\xi)^{-1}e^{2\Re s\int_0^{\xi}\uptheta_j(\zeta)^{-1}\dd\zeta}\dd\xi\\
& = & -\lim_{R\rightarrow\infty}\frac{1}{2\Re s}e^{2\Re s \int_0^{\xi}\uptheta_j(\zeta)^{-1}\dd\zeta}\Bigr|_{\xi=0}^{\xi=R}\\
& = & \frac{1}{2\Re s}\bigl(1-\lim_{R\rightarrow\infty}e^{2\Re s \int_0^{R}\uptheta_j(\zeta)^{-1}\dd\zeta}\bigr)\;\;=\;\;\frac{1}{2\Re s}
\end{eqnarray*}
since $\frac{1}{\uptheta_j}\not\in \Ls^1(0,\infty)$ holds by assumption. We thus have that
$$
x_0(\xi)=\begin{bmatrix}x_+^{(0)}(\xi)\\x_-^{(0)}(\xi)\end{bmatrix}=\begin{bmatrix}0\\\beta\cdot\theta(\xi)^{-1}e^{s\int_0^{\xi}\Theta^{-1}\dd\zeta}\end{bmatrix}
$$
for $\xi\in[0,\infty)$ defines a function $x_0\in D(\mathcal{A})$ satisfying the first equation in \eqref{EQ-TRANS}. The second equation in \eqref{EQ-TRANS} then yields
$$
u_0=\Theta(0)x_-^{(0)}=\Theta(0)\beta\Theta(0)^{-1}e^{s\cdot0}=\beta
$$
and the third equation in \eqref{EQ-TRANS} leads to
$$
G(s)u_0=\Lambda(0)x_+^{(0)}(0)=0
$$
which shows that $G(s)=0$ whenever $\Re s>0$ holds.
\end{proof}

\smallskip

Now we are ready to classify all boundary conditions that turn $A_{\Delta}$ into a generator on $\Ls^2_{|\Delta|}(0,\infty)$. We point out that is essential that we work in $\Ls^2_{|\Delta|}(0,\infty)$ since $|\Delta|$ is a priori not boundedly invertible and thus $\Ls^2_{|\Delta|}(0,\infty)\not=\Ls^2(0,\infty)$.

\smallskip

\begin{prop}\label{WEISS} Let $\Delta=\diag(\Lambda,\Theta)\colon[0,\infty)\rightarrow\mathbb{R}^{n\times n}$ be as in Lemma \ref{LEM-BCS-WELLPOSED}. Let $K\in\mathbb{C}^{n_-\times{}n_-}$ and $Q\in\mathbb{C}^{n_-\times{}n_+}$ be matrices such that $[K\;\;Q]\in\mathbb{C}^{n_-\times{}n}$ has rank $n_-$. The operator $A_{\Delta}\colon D(A_{\Delta})\rightarrow \Ls^2_{|\Delta|}(0,\infty)$ given by
\begin{eqnarray*}
A_{\Delta}x &= &(\Delta{}x)'\\
D(A_{\Delta}) &= &\bigl\{x\in \Ls^2_{|\Delta|}(0,\infty)\:;\:\Delta{}x\in \AC[0,\infty), \; (\Delta{}x)'\in \Ls^2_{|\Delta|}(0,\infty)\\
& & \phantom{+++++++++++}\text{ and }\;K\Theta(0)x_-(0)+Q\Lambda(0)x_+(0)=0\bigr\}
\end{eqnarray*}
generates a $\Cnull$-semigroup if and only if $K$ is invertible. Here we use the notation $x=[x_+\;\;x_-]^T$.
\end{prop}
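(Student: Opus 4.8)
The plan is to prove the two implications by different means: sufficiency (\textbf{if} $K$ is invertible) via the Weiss Theorem, and necessity (\textbf{only if}) via a direct spectral argument that exhibits eigenvalues in an unbounded region.

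For the sufficiency I would close the loop by feeding the output back into the input. Assuming $K$ invertible, set $F:=K^{-1}Q$, regarded as a bounded linear operator from $Y=\mathbb{C}^{n_+}$ to $U=\mathbb{C}^{n_-}$. By Lemma~\ref{LEM-BCS-WELLPOSED} the boundary control system $(\Sigma)$ of Lemma~\ref{BCS-LEM} is well-posed with transfer function $G\equiv0$, so $I+G(s)F=I$ is boundedly invertible on every right half-plane and Theorem~\ref{WEISS-THM} applies. It yields that the closed-loop system $(\Sigma')$ with $u=(\mathcal{B}+F\mathcal{C})x$ is again a well-posed boundary control system; in particular, by Definition~\ref{JZ-ASS}(ii), the operator $\mathcal{A}$ restricted to $\ker(\mathcal{B}+F\mathcal{C})$ generates a $\Cnull$-semigroup. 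It then remains to identify this operator with $A_{\Delta}$. Since $(\mathcal{B}+F\mathcal{C})x=\Theta(0)x_-(0)+K^{-1}Q\Lambda(0)x_+(0)$ and $K$ is invertible, the vanishing of this expression is equivalent to $K\Theta(0)x_-(0)+Q\Lambda(0)x_+(0)=0$; hence $\ker(\mathcal{B}+F\mathcal{C})$ is precisely the domain $D(A_{\Delta})$ of the statement, and the generated semigroup is the desired one.

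For the necessity I would show that a singular $K$ forces eigenvalues arbitrarily far to the right. Fix $s$ with $\Re s>0$ and solve $(s-A_{\Delta})x=0$, i.e.\ $(\Delta x)'=sx$, componentwise. On the positive entries $\lambda_k$ the solution is a multiple of $\lambda_k^{-1}\e^{s\int_0^{\xi}\lambda_k^{-1}}$, whose weighted factor $\e^{\Re s\int_0^{\xi}\lambda_k^{-1}}$ diverges because $\tfrac1{\lambda_k}\notin\Ls^1(0,\infty)$; the boundedness of $\lambda_k x_k$ supplied by Lemma~\ref{BAR-LEM} (used exactly as in the proof of Lemma~\ref{LEM-BCS-WELLPOSED}) therefore forces $x_+\equiv0$. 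On the negative entries $\uptheta_j$ the analogous exponential decays and the resulting $x_-$ lies in $\Ls^2_{|\Theta|}(0,\infty)$, again by the integral computation of Lemma~\ref{LEM-BCS-WELLPOSED}. Thus the space of $\Ls^2_{|\Delta|}$-solutions of $(\Delta x)'=sx$ is $n_-$-dimensional, parametrized by $\beta:=\Theta(0)x_-(0)$ with $x_+=0$, and the boundary condition collapses to $K\beta=0$.

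If $K$ is not invertible I would pick $0\neq\beta\in\ker K$; for every $s$ with $\Re s>0$ this produces a nonzero $x\in D(A_{\Delta})$ with $A_{\Delta}x=sx$, so the point spectrum of $A_{\Delta}$ contains the whole open right half-plane. This is incompatible with $A_{\Delta}$ generating a $\Cnull$-semigroup, whose spectrum must lie in some left half-plane $\{\Re s\leqslant\omega_0\}$, and the necessity follows. All steps are routine given the machinery of Section~\ref{INTERLUDE}; since $G\equiv0$ the hypothesis of the Weiss Theorem is met trivially, so no smallness estimate is needed. The only points requiring genuine care are the bookkeeping that converts the feedback $\mathcal{B}+F\mathcal{C}$ into the matrix boundary condition (where invertibility of $K$ is essential) and the verification, via Lemma~\ref{BAR-LEM}, that the candidate eigenfunctions really lie in $D(A_{\Delta})$; I expect the feedback identification to be the main obstacle.
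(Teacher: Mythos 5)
Your proof is correct, and while the sufficiency half coincides with the paper's argument (feedback $F=K^{-1}Q$ applied via Theorem \ref{WEISS-THM} to the well-posed zero-transfer-function system of Lemmas \ref{BCS-LEM} and \ref{LEM-BCS-WELLPOSED}, then the identification $D(\mathcal{A})\cap\ker(\mathcal{B}+F\mathcal{C})=D(A_{\Delta})$), your necessity half takes a genuinely different route. The paper argues dynamically: it picks $x_0=[g/\lambda_1\;0\cdots0]^T$ with $g(0)=0$ and $g\not=0$ on $(0,1)$, notes $x_0\in D(A_{\Delta})$, assumes generation, computes the resulting classical solution explicitly from Propositions \ref{PROP-SEMI-PLUS} and \ref{PROP-SEMI-MINUS} (the equation decouples coordinatewise, so uniqueness for the scalar problems pins the solution down as a weighted shift), and then shows the boundary condition forces $\bar{q}_1g(p_{\lambda_1}^{-1}(t))=0$ for all $t\geqslant0$, contradicting $g|_{(0,1)}\not=0$. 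You instead solve the eigenvalue equation $(\Delta x)'=sx$ for $\Re s>0$; this is literally the same ODE computation the paper performs in part 2 of Lemma \ref{LEM-BCS-WELLPOSED}, where it is shown that $\Ls^2_{|\Delta|}$-membership forces $x_+\equiv0$ (via Lemma \ref{BAR-LEM}) while the $\Theta$-components decay and lie in the weighted space, so the solution space is parametrized by $\beta=\Theta(0)x_-(0)\in\mathbb{C}^{n_-}$ and the boundary condition collapses to $K\beta=0$. A singular $K$ thus puts every $s$ with $\Re s>0$ in the point spectrum, which is incompatible with the growth-bound constraint $\{\Re s>\omega_0\}\subseteq\rho(A_{\Delta})$ for generators, cf.\ \cite[Theorem II.1.10]{EN}. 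Your route is shorter and reuses work already done for the transfer function, and it avoids the paper's implicit appeal to uniqueness of classical solutions of the decoupled transport equations; the paper's route, in exchange, stays entirely at the level of the PDE and exhibits the concrete mechanism of failure, namely that an outgoing characteristic carries boundary data which a rank-deficient $K$ cannot annihilate, without invoking any spectral theory. Both arguments use the full hypotheses in the same places (rank of $[K\;\;Q]$ only in the sufficiency direction, $1/\lambda_k\notin\Ls^1(0,\infty)$ to kill $x_+$), so there is no difference in generality.
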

\begin{proof}1.~We consider the boundary control system $(\Sigma)$ from Lemma \ref{BCS-LEM}, which is well-posed and whose transfer function is constant zero. We consider the feedback operator $F\colon\mathbb{C}^{n_+}\rightarrow\mathbb{C}^{n_-}$, $Fy=K^{-1}Qy$. Applying Theorem \ref{WEISS-THM} we obtain that\label{SIGMA-PRIME}
$$
(\Sigma')\;\;\begin{cases}
\;\dot{x}(t)=\mathcal{A}x(t)\\
\;u(t)=(\mathcal{B}+F\mathcal{C})x(t)\\
\;y(t)=\mathcal{C}x(t)
\end{cases}
$$
is a boundary control system which is well-posed. In view of Definition \ref{JZ-ASS}(ii) we have in particular that
$$
A\colon D(A)\rightarrow \Ls^2_{|\Delta|}(0,\infty),\;Ax=\mathcal{A}x
$$
generates a $\Cnull$-semigroup with 
\begin{eqnarray*}
D(A) & = & D(\mathcal{A})\cap\ker(\mathcal{B}+F\mathcal{C})\\
 & = &\bigl\{x\in \Ls^2_{|\Delta|}(0,\infty)\:;\:\mathcal{H}x\in \AC[0,\infty),\; (\mathcal{H}x)'\in \Ls^2_{|\Delta|}(0,\infty) \text{ and }(\mathcal{B}+F\mathcal{C})x(t)=0\bigr\}\\
& = &\bigl\{x\in \Ls^2_{|\Delta|}(0,\infty)\:;\:\mathcal{H}x\in \AC[0,\infty),\; (\mathcal{H}x)'\in \Ls^2_{|\Delta|}(0,\infty)\\
& &\phantom{++++++++++++}\text{ and }\:\Theta(0)x_-(0)+K^{-1}Q\Lambda(0)x_+(0)=0\bigr\}\\
& = & D(A_{\Delta})
\end{eqnarray*}
in view of $(\mathcal{B}+F\mathcal{C})x(t) = \mathcal{B}x(t)+K^{-1}Q\mathcal{C}x(t)=\Theta(0)x_-(0)+K^{-1}Q\Lambda(0)x_+(0)$.

\medskip

2.~Assume that $K\in\mathbb{C}^{n_-\times n_-}$ is not invertible. We show that $A_{\Delta}\colon D(A_{\Delta})\rightarrow \Ls^2_{|\Delta|}(0,\infty)$ is not a generator. Assume the contrary. Since $\rank K<n_-$ but $\rank[KQ]=n_{-}$ we see that $\rank Q\not=0$. Therefore also $\rank Q^{\star}\not=0$. This means that there is $v\in\mathbb{C}^{n_-}$ such that $Q^{\star}v\not=0$. We obtain that
$$
q^{\star}:=v^{\star}Q=(Q^{\star}v)^{\star}\not=0
$$
and we assume w.l.o.g.~that $q_1\not=0$. Now we select $g\in C_c^{\infty}[0,\infty)$ with $g(0)=0$ and $g(\xi)\not=0$ for $\xi\in(0,1)$ and put $x_0:=[g/\lambda_1\;0\cdots0]^T$ where $\lambda_1$ is the first entry of $\Lambda$. Then, $\mathcal{H}x_0=[g\;0\cdots0]^T\in \AC[0,\infty)$, $(\mathcal{H}x_0)'=[g'\;\;0\cdots0]^T\in \Ls^2_{|\Delta|}(0,\infty)$ and $K\Theta(0)x_-(0)+Q\Lambda(0)x_+(0)=0$ since $x(0)=0$ holds. This means that $x_0\in D(A_{\Delta})$ holds. By our assumption there exists a classical solution, i.e., a function $x=[x_+(\xi,t)\;\;x_-(\xi,t)]^T$ with
\begin{equation}\label{EQ-CLASS}
x(\cdot,0)=x_0,\;\;\frac{\partial}{\partial t}x(\xi,t)=\frac{\partial}{\partial t}\bigl(\mathcal{H}(\xi)x(\xi,t)\bigr)\;\text{ and }\;K\Theta(0)x_-(0,t)+Q\Lambda(0)x_+(0,t)=0
\end{equation}
for all $t\geqslant0$. Reading the above coordinate-wise and using the notation
$$
x_+=[x_{+,1}\;\cdots\;x_{+,n_+}]^T\;\text{ and }\; x_-=[x_{-,1}\;\cdots\;x_{-,n_-}]^T
$$
we can employ Proposition \ref{PROP-SEMI-PLUS} and Proposition \ref{PROP-SEMI-MINUS} to compute the coordinates of $x_+(\xi,t)$ and $x_-(\xi,t)$ explicitly to see that
$$
x(\xi,t)= \left[\hspace{-3pt}\begin{array}{r}x_{+,1}(\xi,t)\vspace{-3pt}\\\vdots\vspace{-5pt}\hspace{22pt}\\x_{+,n_+}(\xi,t)\\x_{-,1}(\xi,t)\vspace{-3pt}\\\vdots\vspace{-5pt}\hspace{22pt}\\x_{-,n_-}(\xi,t)\end{array}\hspace{-3pt}\right]
= \begin{bmatrix}\frac{\lambda_1(\xi+\mu_{\lambda_1}(\xi,t))}{\lambda_1(\xi)}x_{+,1}^{(0)}(\xi+\mu_{\lambda_1}(\xi,t))\\0\vspace{-4pt}\\\vdots\\0\end{bmatrix}
=\begin{bmatrix}\frac{g(\xi+\mu_{\lambda_1}(\xi,t))}{\lambda_1(\xi)}\\0\vspace{-4pt}\\\vdots\\0\end{bmatrix}
$$
holds. Here, we used that $x_{+,1}^{(0)}(\xi)=g(\xi)/\lambda_1(\xi)$ and $x_{+,k}^{(0)}(\xi)=x_{-,j}^{(0)}(\xi)=0$ hold for $k=2,\dots,n_+$ and $j=1,\dots,n_-$. Since we have $x_-(0,t)=0$, we get from the last equation in \eqref{EQ-CLASS} that
$$
0=Q\Lambda(0)x_+(0,t)=Q\begin{bmatrix}\lambda_1(0)\frac{g(0+\mu_{\lambda_1}(0,t))}{\lambda_1(0)}\\0\vspace{-4pt}\\\vdots\\0\end{bmatrix}=\begin{bmatrix}g(0+\mu_{\lambda_1}(0,t))\\0\vspace{-4pt}\\\vdots\\0\end{bmatrix}
$$
holds for all $t\geqslant0$. Multiplying from the left with $q^{\star}=[\bar{q}_1\;\cdots\;\bar{q}_{n_+}]$ yields
$$
0=[\bar{q}_1\;\cdots\;\bar{q}_{n_+}]\begin{bmatrix}g(\xi+\mu_{\lambda_1}(\xi,t))\\0\vspace{-4pt}\\\vdots\\0\end{bmatrix}=\bar{q}_1g(\mu_{\lambda_1}(0,t))=\bar{q}_1g(p_{\lambda_1}^{-1}(t))
$$
where the right hand side is non-zero if we select $t>0$ sufficiently small. Indeed, $\bar{q}_1\not=0$ holds by our assumptions, $\lim_{t\rightarrow0}p_{\lambda_1}^{-1}(t)=0$ and $g|_{(0,1)}\not=0$, implies the latter. Contradiction.
\end{proof}

\smallskip

\begin{rem}\label{REM-NULL} In all the $\mathbb{C}^n$-valued results, starting with Proposition \ref{PROP-DIAG}, we assumed that $n_+$ and $n_-$ are strictly positive. This had three reasons. Firstly, if $n_+=0$, or $n_-=0$, then we would need to put $\Delta=\Theta$, or $\Delta=\Lambda$ respectively, instead of $\Delta=\diag(\Lambda,\Theta)$, so all propositions would require corresponding definitions by cases. Secondly, when we consider the boundary control system $(\Sigma)$ in Lemma \ref{BCS-LEM} and Lemma \ref{LEM-BCS-WELLPOSED}, then allowing $n_+=0$ or $n_-=0$ would mean that we enter the pathological situation of the output or the input space being the zero space $\mathbb{C}^0=\{0\}$. Finally, if $n_+=0$ or $n_-=0$, then Proposition \ref{WEISS} needs to be modified as follows to remain true. Firstly, we notice that in both cases, $n_+=0$ or $n_-=0$, the matrix $Q\in\mathbb{C}^{n_-\times n_+}$ has to be removed from the statement. If $n_+=0$, then we need to put $[K\;\;Q]=K\in\mathbb{C}^{n_-\times n_-}$ and if $n_-=0$, then $[K\;\;Q]$ has to be omitted at all.

\medskip

\begin{compactitem}

\item[(i)] Let $n_+=0$ and $n=n_-\geqslant1$. Let $\Delta=\diag(\uptheta_1,\dots,\uptheta_{n_-})\colon[0,\infty)\rightarrow\mathbb{C}^{n\times n}$ be given such that $\uptheta_j\colon[0,\infty)\rightarrow(-\infty,0)$ is continuous and such that $\frac{1}{\uptheta_j}\not\in \Ls^1(0,\infty)$ for $j=1,\dots,n_-$. Let $K\in\mathbb{C}^{n_-\times n_-}$ have rank $n_-$. Then
\begin{eqnarray*}
\phantom{XXXX}A_{\Delta}x &= &(\Delta{}x)'\\
\phantom{XXXX}D(A_{\Delta}) &= &\bigl\{x\in \Ls^2_{|\Delta|}(0,\infty)\:;\:\Delta{}x\in \AC[0,\infty), \; (\Delta{}x)'\in \Ls^2_{|\Delta|}(0,\infty)\text{ and }K\Theta(0)x(0)=0\bigr\}
\end{eqnarray*}
generates a $\Cnull$-semigroup.

\vspace{5pt}

\item[(ii)] Let $n_-=0$ and $n=n_+\geqslant1$. Let $\Delta=\diag(\lambda_1,\dots,\lambda_{n_-})\colon[0,\infty)\rightarrow\mathbb{C}^{n\times n}$ be given such that $\lambda_k\colon[0,\infty)\rightarrow(-\infty,0)$ is continuous and such that $\frac{1}{\lambda_k}\not\in \Ls^1(0,\infty)$ for $k=1,\dots,n_+$. Then
\begin{eqnarray*}
\phantom{XXXX}A_{\Delta}x &= &(\Delta{}x)'\\
\phantom{XXXX}D(A_{\Delta}) &= &\bigl\{x\in \Ls^2_{|\Delta|}(0,\infty)\:;\:\Delta{}x\in \AC[0,\infty), \; (\Delta{}x)'\in \Ls^2_{|\Delta|}(0,\infty)\bigr\}
\end{eqnarray*}
generates a $\Cnull$-semigroup.
\end{compactitem}

\medskip

In (i), $D(A_{\Delta})$ remains unchanged if we put $K=I$. Thus, in both cases it is enough to proceed as in the proof of Proposition \ref{PROP-DIAG} but without the $\lambda_k$'s and the $\uptheta_j$'s, respectively. Notice that we do not need to assume that $\Delta$ is bounded as we do not use Lemma \ref{LEM-BCS-WELLPOSED} and Proposition \ref{WEISS} in this special case.
\end{rem}

\medskip

Finally we are able to treat general port-Hamiltonian partial differential equations. In Theorem \ref{PHS-THM} below we consider the case $P_0=0$ and in Corollary \ref{PHS-COR} we then allow $P_0\in\mathbb{C}^{n\times n}$ to be arbitrary. This will require an additional assumption on the Hamiltonian.

\bigskip

\begin{thm}\label{PHS-THM} Let $n\geqslant1$. Let $P_1\in\mathbb{C}^{n\times n}$ be Hermitian and invertible. Let $\mathcal{H}\colon[0,\infty)\rightarrow\mathbb{R}^{n\times n}$ be continuously differentiable, positive and Hermitian. Let $\Delta$, $S\colon[0,\infty)\rightarrow\mathbb{C}^{n\times{}n}$ be continuously differentiable such that $\Delta$ is diagonal and
$$
P_1\mathcal{H}(\xi)=S(\xi)^{-1}\Delta(\xi)S(\xi)
$$ 
holds for all $\xi\in[0,\infty)$. Let $n_-\geqslant1$ be the number of negative eigenvalues of $P_1$ and let $Z^-(0)$ be the span of the eigenvectors of $P_1\mathcal{H}(0)$ that correspond to negative eigenvalues. Let $n_+=n-n_-\geqslant0$ be the number of positive eigenvalues of $P_1$. Let $W_B\in\mathbb{C}^{n_-\times n}$ and let
$$
\begin{bmatrix}U_1&U_2\end{bmatrix}=W_B\mathcal{H}(0)S(0)^{-1}
$$
with $U_1\in\mathbb{C}^{n_-\times{}n_+}$, $U_2\in\mathbb{C}^{n_-\times{}n_-}$. If $n_+=0$, then let $U_2=W_B\mathcal{H}(0)S(0)^{-1}\in\mathbb{C}^{n_-\times{}n_-}$. Assume

\vspace{5pt}

\begin{compactitem}

\item[(a)] that $\Delta\colon[0,\infty)\rightarrow\mathbb{C}^{n\times n}$ is bounded,

\vspace{3.5pt}

\item[(b)] that $S\colon \Ls^2_{\mathcal{H}}(0,\infty)\rightarrow \Ls^2_{|\Delta|}(0,\infty)$, $x\mapsto Sx$ is an isomorphism of Banach spaces,

\vspace{2pt}

\item[(c)] that $B\colon \Ls^2_{|\Delta|}(0,\infty)\rightarrow \Ls^2_{|\Delta|}(0,\infty)$, $g\mapsto Bg:=S(S^{-1})'\Delta{}g$ is a bounded operator,

\vspace{3pt}

\item[(d)] that $C\colon \Ls^2_{\mathcal{H}}(0,\infty)\rightarrow \Ls^2_{|\Delta|}(0,\infty)$, $x\mapsto Cx:=S'P_1\mathcal{H}x$ is well-defined,

\vspace{4pt}

\item[(e)] that $\operatorname{rk}W_B=n_-$.

\end{compactitem}

\vspace{5pt}

Then for\vspace{-7pt}
\begin{eqnarray*}
A_{\mathcal{H}}x &= &P_1(\mathcal{H}x)'\\
D(A_{\mathcal{H}}) &= &\bigl\{x\in \Ls^2_{\mathcal{H}}(0,\infty)\:;\:\mathcal{H}x\in \AC[0,\infty), \; (\mathcal{H}x)'\in \Ls^2_{\mathcal{H}}(0,\infty)\text{ and }W_B(\mathcal{H}x)(0)=0\bigr\}.
\end{eqnarray*}

\vspace{-5pt}

the following are equivalent.

\begin{compactitem}

\vspace{5pt}

\item[(i)] The operator $A_{\mathcal{H}}\colon D(A_{\mathcal{H}})\rightarrow \Ls^2_{\mathcal{H}}(0,\infty)$ generates a $\Cnull$-semigroup.

\vspace{3pt}

\item[(ii)] The matrix $U_2$ is invertible.

\vspace{4pt}

\item[(iii)] We have $W_B\mathcal{H}(0)Z^{-1}(0)=\mathbb{C}^{n_-}$.
\end{compactitem}
\end{thm}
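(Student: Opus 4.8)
The plan is to use the change of variables $x\mapsto Sx$ from assumption (b) to conjugate $A_{\mathcal{H}}$ into a bounded perturbation of the diagonal operator $A_{\Delta}$ classified in Proposition \ref{WEISS}. First I would carry out the formal computation: for $g\in \Ls^2_{|\Delta|}(0,\infty)$ put $x=S^{-1}g$, so that $\mathcal{H}x=P_1^{-1}S^{-1}\Delta g$ by the diagonalization $P_1\mathcal{H}=S^{-1}\Delta S$. Differentiating and applying $P_1$ yields
$$
SA_{\mathcal{H}}S^{-1}g = S(S^{-1})'\Delta g + (\Delta g)' = A_{\Delta}g+Bg,
$$
where $B=S(S^{-1})'\Delta$ is the bounded operator from assumption (c). Since $S$ is an isomorphism and $B$ is bounded, $A_{\mathcal{H}}$ generates a $\Cnull$-semigroup if and only if $A_{\Delta}$, equipped with the transformed boundary condition, does.

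Next I would match domains and boundary conditions. Using $(\mathcal{H}x)(0)=\mathcal{H}(0)S(0)^{-1}g(0)$ and $[U_1\;U_2]=W_B\mathcal{H}(0)S(0)^{-1}$, the constraint $W_B(\mathcal{H}x)(0)=0$ becomes $U_1g_+(0)+U_2g_-(0)=0$ for $g=[g_+\;g_-]^T$. Comparing with the boundary condition $Q\Lambda(0)g_+(0)+K\Theta(0)g_-(0)=0$ of Proposition \ref{WEISS} identifies $K=U_2\Theta(0)^{-1}$ and $Q=U_1\Lambda(0)^{-1}$, both well-defined since $\Lambda(0)$ and $\Theta(0)$ are the invertible diagonal blocks of $\Delta(0)$. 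The rank hypothesis of Proposition \ref{WEISS} then follows from (e): because $[K\;Q]=[U_2\;U_1]\diag(\Theta(0)^{-1},\Lambda(0)^{-1})$ and $\mathcal{H}(0)S(0)^{-1}$ is invertible, one has $\rank[K\;Q]=\rank W_B=n_-$. The genuine work here is verifying that $g\in S\,D(A_{\mathcal{H}})$ is equivalent to $\Delta g\in \AC[0,\infty)$ with $(\Delta g)'\in \Ls^2_{|\Delta|}(0,\infty)$; this is where the continuous differentiability of $\mathcal{H}$, $S$, $S^{-1}$, $\Delta$ together with assumptions (a) and (d) enter, since differentiating $\Delta g=SP_1\mathcal{H}x$ produces the cross term $S'P_1\mathcal{H}x=Cx$, which lies in $\Ls^2_{|\Delta|}(0,\infty)$ precisely by (d).

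With these two steps in place, Proposition \ref{WEISS} gives that $A_{\Delta}$ generates if and only if $K$ is invertible, and since $\Theta(0)$ is invertible this is equivalent to the invertibility of $U_2$; this establishes (i)$\Leftrightarrow$(ii). When $n_+=0$ one argues identically but invokes Remark \ref{REM-NULL}(i) in place of Proposition \ref{WEISS}. The equivalence (ii)$\Leftrightarrow$(iii) is then pure linear algebra: the columns of $S(0)^{-1}$ are eigenvectors of $P_1\mathcal{H}(0)=S(0)^{-1}\Delta(0)S(0)$, so the ones corresponding to negative eigenvalues span $Z^-(0)=S(0)^{-1}(\{0\}^{n_+}\times\mathbb{C}^{n_-})$, whence
$$
W_B\mathcal{H}(0)Z^-(0)=[U_1\;U_2]\bigl(\{0\}^{n_+}\times\mathbb{C}^{n_-}\bigr)=\ran U_2.
$$
Thus condition (iii) asserts exactly that the square matrix $U_2$ is surjective, which is equivalent to its invertibility.

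I expect the main obstacle to be the domain identification described in the second paragraph, rather than the algebraic similarity, which is purely formal. One must show that the change of variables respects the weak-derivative and local absolute continuity conditions defining both domains and transforms the pointwise boundary trace at $0$ correctly; controlling the derivative cross terms simultaneously in the two differently weighted spaces $\Ls^2_{\mathcal{H}}(0,\infty)$ and $\Ls^2_{|\Delta|}(0,\infty)$ is exactly the role played by the technical hypotheses (a)--(d).
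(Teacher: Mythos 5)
Your proposal is correct and follows essentially the same route as the paper's own proof: the similarity transformation $SA_{\mathcal{H}}S^{-1}=A_{\Delta}+B$, the identification $K=U_2\Theta(0)^{-1}$, $Q=U_1\Lambda(0)^{-1}$ to invoke Proposition \ref{WEISS} (with Remark \ref{REM-NULL}(i) when $n_+=0$), the domain matching $S(D(A_{\mathcal{H}}))=D(A_{\Delta})$ via the cross term $Cx=S'P_1\mathcal{H}x$ and hypothesis (d), and the linear-algebra identity $W_B\mathcal{H}(0)Z^-(0)=\operatorname{ran}U_2$ for (ii)$\Leftrightarrow$(iii). The only details you leave implicit that the paper states explicitly are the Sylvester inertia argument fixing the signature of $\Delta(\xi)$ for all $\xi$ and the observation that boundedness (a) forces the reciprocals of the diagonal entries to lie outside $\Ls^1(0,\infty)$, which is what makes Proposition \ref{WEISS} applicable.
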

\begin{proof} Since
$$
\sigma(P_1\mathcal{H}(\xi))=\sigma(P_1\mathcal{H}(\xi)^{1/2}\mathcal{H}(\xi)^{1/2})=\sigma(\mathcal{H}(\xi)^{1/2}P_1\mathcal{H}(\xi)^{1/2})=\sigma(\mathcal{H}(\xi)^{1/2}P_1(\mathcal{H}(\xi)^{1/2})^{\star})
$$
holds, for every $\xi\in[0,\infty)$ the matrix $P_1\mathcal{H}(\xi)$ has the same number of positive resp.~negative eigenvalues as $P_1$ by Sylvester's law of inertia, see, e.g., \cite[Definition 4.5.4 and Theorem 4.5.7]{RH}. The number of negative eigenvalues of $P_1\mathcal{H}(\xi)$ is thus $n_-$ and the number of positive eigenvalues is $n_+$; both are independent of $\xi\in[0,\infty)$.

\smallskip

Assume first that $n_-,\:n_+\geqslant1$ holds. By rearranging the coordinates in $\mathbb{C}^n$ we may assume that $\Delta$ is of the form considered in Proposition \ref{PROP-DIAG}. By our assumptions the entries are strictly positive, resp.~strictly negative continuous functions and as they are all bounded by (a), their reciprocals are not in $\Ls^1(0,\infty)$. For $g\in \Ls^2_{|\Delta|}(0,\infty)$ we write $g(\xi)=[g_+(\xi)\;\;g_-(\xi)]^T$ with $g_+(\xi)\in\mathbb{C}^{n_+}$ and $g_-(\xi)\in\mathbb{C}^{n_-}$. Then we have
\begin{eqnarray*}
W_B(\mathcal{H}S^{-1}g)(0) & = & W_B\mathcal{H}(0)S^{-1}(0)g(0)\\
&=& \begin{bmatrix}U_1&U_2\end{bmatrix}\begin{bmatrix}g_+(0)\\g_-(0)\end{bmatrix}\\
&= & U_1g_+(0)+U_2g_-(0) + U_2g_-(0)\\
& = & U_1\Lambda(0)^{-1}(\Lambda(0)g_+(0))+U_2\Theta(0)^{-1}(\Theta(0)g_-(0)).\phantom{\int}\\
& = & Q(\Lambda(0)g_+(0))+K(\Theta(0)g_-(0)).
\end{eqnarray*}
with $Q:=U_1\Lambda(0)^{-1}$ and $K:=U_2\Theta(0)^{-1}$. By (e) we have 
$$
\operatorname{rank}\,[K\;\;Q]=\operatorname{rank}\,[U_1\;\;U_2]{\textstyle\begin{bmatrix}\Lambda(0)^{-1}&0\\0&\Theta(0)^{-1}\end{bmatrix}}=\operatorname{rank}\,[U_1\;\;U_2]=\operatorname{rank}\,W_B=n_-
$$
and therefore by Proposition \ref{WEISS}, the operator $A_{\Delta}\colon D(A_{\Delta})\rightarrow \Ls^2_{|\Delta|}(0,\infty)$ with
\begin{eqnarray*}
A_{\Delta}x &= &(\Delta{}x)'\\
D(A_{\Delta}) &= &\bigl\{g\in \Ls^2_{|\Delta|}(0,\infty)\:;\:\Delta{}g\in \AC[0,\infty), \; (\Delta{}g)'\in \Ls^2_{|\Delta|}(0,\infty)\text{ and }\;W_B(\mathcal{H}S^{-1}g)(0)=0\bigr\}
\end{eqnarray*}
generates a $\Cnull$-semigroup if and only if $K$ is invertible. Since $\Theta(0)^{-1}$ is invertible, the latter holds if and only if $U_2$ is invertible and this is true if and only if
\begin{equation}\label{EQIV}
\forall\:f\in\mathbb{C}^{n_-}\;\exists\:h\in\mathbb{C}^{n_-}\colon f=U_2h=[U_1\;U_2]{\textstyle\begin{bmatrix}0\\h\end{bmatrix}}=W_B\mathcal{H}(0)S^{-1}(0){\textstyle\begin{bmatrix}0\\h\end{bmatrix}}
\end{equation}
is valid. The last $n_-$ columns of $S^{-1}(0)$ are eigenvectors of $P_1\mathcal{H}(0)$ that correspond to negative eigenvalues of $P_1\mathcal{H}(0)$. Consequently, $S^{-1}(0)[0\;\;h]^T\in Z^{-}(0)$ and thus \eqref{EQIV} is equivalent to $W_B\mathcal{H}(0)Z^-(0)=\mathbb{C}^{n_-}$.

\smallskip

By (b) we have the isomorphism
$$
S\colon \Ls^2_{\mathcal{H}}(0,\infty)\longrightarrow \Ls^2_{|\Delta|}(0,\infty),\;\;x\mapsto Sx=:g
$$
which establishes a bijection between $\AC[0,\infty)\cap \Ls^2_{\mathcal{H}}(0,\infty)$ and $\AC[0,\infty)\cap \Ls^2_{|\Delta|}(0,\infty)$ as $S$ is continuously differentiable. For $g\in D(A_{\Delta})$ we therefore can compute locally
\begin{equation}\label{VAR-TRANS}
\begin{array}{rcl}\vspace{4pt}
SA_{\mathcal{H}}S^{-1}g & = & SP_1(\mathcal{H}S^{-1}g)'= S(P_1\mathcal{H}S^{-1}g)'= S(S^{-1}\Delta{}SS^{-1}g)'\\
& = & S(S^{-1})'\Delta{}g+SS^{-1}(\Delta{}g)'=  A_{\Delta}g + Bg
\end{array}
\end{equation}
where $B\colon \Ls^2_{|\Delta|}(0,\infty)\rightarrow \Ls^2_{|\Delta|}(0,\infty)$ is bounded by (c). This means that $A_{\Delta}\colon D(A_{\Delta})\rightarrow \Ls^2_{|\Delta|}(0,\infty)$ generates a $\Cnull$-semigroup if and only if $SA_{\mathcal{H}}S^{-1}\colon D(A_{|\Delta|})\rightarrow \Ls^2_{|\Delta|}(0,\infty)$ generates a $\Cnull$-semigroup. In order to conclude that this is equivalent to $A_{\mathcal{H}}\colon D(A_\mathcal{H})\rightarrow \Ls^2_{\mathcal{H}}(0,\infty)$ being a generator, it remains to see that
$$
S(D(A_{\mathcal{H}}))=D(A_{|\Delta|}),
$$
which then automatically implies $S^{-1}(D(A_{|\Delta|}))=D(A_{\mathcal{H}})$. Indeed, for $g\in D(A_{|\Delta|})$ our computation in \eqref{VAR-TRANS} shows $A_{\Delta}g+Bg\in \Ls^2_{|\Delta|}(0,\infty)$ and thus
$$
P_1(\mathcal{H}S^{-1}g)'=A_{\mathcal{H}}S^{-1}g=S^{-1}A_{\Delta}g+S^{-1}Bg\in \Ls^2_{\mathcal{H}}(0,\infty)
$$
holds, which means that $S^{-1}g\in D(A_{\mathcal{H}})$. For the converse let $x\in D(A_{\mathcal{H}})$ be given. We see that
$$
(\Delta{}Sx)'=(SP_1\mathcal{H}S^{-1}Sx)'=(SP_1\mathcal{H}x)'= S'P_1\mathcal{H}x+SP_1(\mathcal{H}x)'=Cx+SP_1(\mathcal{H}x)'\in \Ls^2_{|\Delta|}(0,\infty)
$$
is true by (d) and it follows that $Sx\in D(A_{\Delta})$.

\medskip

If $n_+=0$, then our assumptions imply already that $U_2$ is invertible and also that $W_B\mathcal{H}(0)Z^{-1}(0)=\mathbb{C}^{n_-}$ holds. It has thus to be shown that $A_{\mathcal{H}}$ always generates a $\Cnull$-semigroup. To see this we can repeat the above proof but delete $U_1$, $g_+$ and $\Lambda$ wherever they occur. Moreover, we apply Remark \ref{REM-NULL}(i) instead of Proposition \ref{WEISS} to get the desired result.
\end{proof}

\smallskip

\begin{rem}\label{REM-NULL-2}\begin{compactitem}\item[(i)]Theorem \ref{PHS-THM} remains true if the number $n_-$ of negative eigenvalues of $P_1$ is zero. In this case we have to delete the boundary condition $W_B(\mathcal{H}x)(0)=0$ from the domain of $D(A_{\mathcal{H}})$ and understand conditions (ii) and (iii) to be always true. That then (i) is always valid follows by repeating the second part of the proof of Theorem \ref{PHS-THM} and using Remark \ref{REM-NULL}(ii) instead of Proposition \ref{WEISS}.

\vspace{3pt} 

\item[(ii)] In the case that $P_1\mathcal{H}$ is diagonal and bounded, Proposition \ref{WEISS} contains already a generation result for the operator considered Theorem \ref{PHS-THM}. Indeed, in Theorem \ref{PHS-THM} we could then select $\Delta=P_1\mathcal{H}$ and $S=I$ which would show that the conditions in Theorem \ref{PHS-THM}(a)-(d) hold. The equivalence of Theorem \ref{PHS-THM}(i)-(ii) recovers the statement of Proposition \ref{WEISS} with
$$
[U_1\;\;U_2]=W_B\mathcal{H}(0)=W_B\diag(\Delta(0),\Theta(0))=[Q\;\;K]\diag(\Delta(0),\Theta(0))=Q\Delta(0)+K\Theta(0)
$$
and thus $U_2=K\Theta(0)$ which is invertible if and only if $K$ is invertible. The difference in this case is that we assumed in Theorem \ref{PHS-THM} that $\mathcal{H}$ is continuously differentiable whereas in Proposition \ref{WEISS} it is enough if $\Delta=P_1\mathcal{H}$ is continuous.

\vspace{3pt} 

\item[(iii)] The assumption that $P_1\mathcal{H}$ can be diagonalized in a smooth way is not very restrictive, see \cite[Remark 1.6.1]{JMZ2015} and Kato \cite[Section II]{Kato}. For the wave equation with continuously differentiable coefficients the matrices $S$ and $S^{-1}$ can easily be seen to be continuously differentiable, see Section \ref{SEC:EX}. The other conditions in Theorem \ref{PHS-THM} are more restrictive, but indeed they allow for instance to treat the wave equation with a Hamiltonian that is neither bounded nor bounded away from zero, see Example \ref{EX:3}.
\end{compactitem}
\end{rem}

\bigskip

In the situation of Theorem \ref{PHS-THM} we can perturb the generator $A_{\mathcal{H}}\colon D(A_{\mathcal{H}})\rightarrow \Ls^2_{\mathcal{H}}(0,\infty)$ with any bounded operator $D\colon \Ls^2_{\mathcal{H}}(0,\infty)\rightarrow \Ls^2_{\mathcal{H}}(0,\infty)$ and $A_{\mathcal{H}}+D\colon D(A_{\mathcal{H}})\rightarrow \Ls^2_{\mathcal{H}}(0,\infty)$ will be again a generator. The special case where $D$ is given by the multiplication with $P_0\mathcal{H}$ where $P_0\in\mathbb{C}^{n\times n}$ leads us to the situation of a port-Hamiltonian partial differential equation as considered in Section \ref{SEC:INTRO}.

\smallskip

\begin{cor}\label{PHS-COR} Let $n\geqslant1$. Let $P_1\in\mathbb{C}^{n\times n}$ be Hermitian and invertible. Let
$P_0\in\mathbb{C}^{n\times n}$ be arbitrary. Let $\mathcal{H}\colon[0,\infty)\rightarrow\mathbb{R}^{n\times n}$ be continuously differentiable, positive and Hermitian. Let $n_-$, $\Delta$, $S$, $W_B$, $Z^{-}(0)$ and $U_2$ be as in Theorem \ref{PHS-THM} and assume additionally, that $\mathcal{H}\colon \Ls^2_{\mathcal{H}}(0,\infty)\rightarrow \Ls^2_{\mathcal{H}}(0,\infty)$ is bounded. Then\vspace{-4pt}
\begin{eqnarray*}
A_{\mathcal{H}}x &= &P_1(\mathcal{H}x)'+P_0\mathcal{H}x\\
D(A_{\mathcal{H}}) &= &\bigl\{x\in \Ls^2_{\mathcal{H}}(0,\infty)\:;\:\mathcal{H}x\in \AC[0,\infty), \; (\mathcal{H}x)'\in \Ls^2_{\mathcal{H}}(0,\infty)\text{ and }W_B(\mathcal{H}x)(0)=0\bigr\}.
\end{eqnarray*}

\vspace{-5pt}

generates a $\Cnull$-semigroup if and only if the equivalent conditions in Theorem \ref{PHS-THM}(ii)--(iii)  are satisfied.\hfill\qed
\end{cor}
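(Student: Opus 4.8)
The plan is to realise the operator of the Corollary as a bounded additive perturbation of the operator analysed in Theorem \ref{PHS-THM} and then to appeal to the bounded perturbation theorem for generators of $\Cnull$-semigroups, exactly as indicated in the paragraph preceding the statement. First I would set $A^{(0)}_{\mathcal{H}}x := P_1(\mathcal{H}x)'$ on the domain $D(A_{\mathcal{H}})$ given in the statement; this is precisely the operator of Theorem \ref{PHS-THM}, which generates a $\Cnull$-semigroup if and only if conditions (ii) and (iii) there are satisfied. Writing $Dx := P_0\mathcal{H}x$, we have $A_{\mathcal{H}} = A^{(0)}_{\mathcal{H}} + D$, and since $D$ is everywhere defined the two operators share the domain $D(A_{\mathcal{H}})$, so no change of domain occurs.

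The crux is to verify that $D\colon \Ls^2_{\mathcal{H}}(0,\infty)\rightarrow \Ls^2_{\mathcal{H}}(0,\infty)$ is bounded, and this is exactly where the additional hypothesis that the multiplication operator $x\mapsto\mathcal{H}x$ be bounded enters. One must resist the temptation to factor $D$ as multiplication by the constant matrix $P_0$ composed with multiplication by $\mathcal{H}$, because multiplication by $P_0$ alone need not be bounded on the weighted space $\Ls^2_{\mathcal{H}}(0,\infty)$. Instead I would use that the boundedness of $x\mapsto\mathcal{H}x$ is equivalent, by testing on functions concentrated near a point and using continuity of $\mathcal{H}$, to a uniform upper bound $\mathcal{H}(\xi)\leqslant M\,I$ for some $M>0$ and all $\xi\in[0,\infty)$. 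With $\mathcal{H}\leqslant M\,I$ (and hence $\mathcal{H}^2\leqslant M\mathcal{H}$) the estimate is then immediate:
$$
\|Dx\|^2_{\Ls^2_{\mathcal{H}}(0,\infty)}=\int_0^{\infty}x^{\star}\mathcal{H}P_0^{\star}\mathcal{H}P_0\mathcal{H}x\,\dd\xi\leqslant M\|P_0\|^2\int_0^{\infty}x^{\star}\mathcal{H}^2x\,\dd\xi\leqslant M^2\|P_0\|^2\|x\|^2_{\Ls^2_{\mathcal{H}}(0,\infty)},
$$
so that $D$ is bounded with $\|D\|\leqslant M\|P_0\|$. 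This boundedness of $D$ is the only non-routine point of the whole argument; everything else is formal.

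Finally, with $D$ bounded, I would invoke the bounded perturbation theorem \cite[Theorem III.1.3]{EN}. Applied to the generator $A^{(0)}_{\mathcal{H}}$ and the perturbation $D$ it shows that $A^{(0)}_{\mathcal{H}}+D$ generates a $\Cnull$-semigroup whenever $A^{(0)}_{\mathcal{H}}$ does, and applied to $A^{(0)}_{\mathcal{H}}+D$ together with the perturbation $-D$ it yields the converse. Hence $A_{\mathcal{H}}=A^{(0)}_{\mathcal{H}}+D$ generates a $\Cnull$-semigroup if and only if $A^{(0)}_{\mathcal{H}}$ does, and by Theorem \ref{PHS-THM} this is in turn equivalent to conditions (ii)--(iii), which completes the proof.
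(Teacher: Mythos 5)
Your proposal is correct and follows exactly the route the paper intends: the paper proves the corollary by the remark preceding it, namely that $A_{\mathcal{H}}$ is the bounded perturbation of the generator from Theorem \ref{PHS-THM} by $D=P_0\mathcal{H}\,\cdot$, combined with the bounded perturbation theorem applied in both directions. Your only addition is to verify carefully that $D$ is in fact bounded on $\Ls^2_{\mathcal{H}}(0,\infty)$ --- via the localization argument showing that boundedness of $x\mapsto\mathcal{H}x$ forces $\mathcal{H}(\xi)\leqslant M I$ pointwise, after which $\mathcal{H}P_0^{\star}\mathcal{H}P_0\mathcal{H}\leqslant M^2\|P_0\|^2\mathcal{H}$ --- a detail the paper leaves implicit (and which, as you rightly observe, does not follow from naively composing with multiplication by $P_0$, since the latter need not be bounded on the weighted space).
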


\smallskip

Notice that the additional assumption of Corollary \ref{PHS-COR} is automatically satisfied if $\mathcal{H}\colon[0,\infty)\rightarrow\mathbb{C}^{n\times n}$ is bounded. In the case that $P_1\mathcal{H}$ is diagonal and bounded, and Proposition \ref{WEISS} is applied instead of Theorem \ref{PHS-THM}, we get immediately a corresponding result for the port-Hamiltonian system with arbitrary $P_0\in\mathbb{C}^{n\times n}$.

\bigskip


\section{Well-posedness}\label{SEC:WP}\smallskip

In Proposition \ref{WEISS} we used the Weiss Theorem, see Theorem \ref{WEISS-THM} in Section \ref{INTERLUDE}, to characterize the linear boundary conditions that turn the operator $A_{\Delta}\colon \Ls^2_{\Delta}(0,\infty)\rightarrow \Ls^2_{\Delta}(0,\infty)$ into the generator of a $\Cnull$-semigroup. Our proof showed however more, namely that the boundary control system $(\Sigma')$, see p.~\pageref{SIGMA-PRIME}, is well-posed. Our aim in this section is to extend the latter to the general form of a port-Hamiltonian partial differential equation and to general boundary control and measurement. We follow the ideas of \cite[Chapter 13.2 and 13.4]{JZ} and consider the port-Hamiltonian system
\begin{subequations}\label{eq:PHS}
\begin{align}
\displaystyle\frac{\partial x}{\partial t}(\xi,t)&=P_1\frac{\partial}{\partial\xi}(\mathcal{H}(\xi)x(\xi,t))+P_0(\mathcal{H}(\xi)x(\xi,t))\label{PHS-a}\\
x(\xi,0)&=x_0(\xi)\label{PHS-b}\\[4.5pt]
u(t)&=W_{B,1}\mathcal{H}(0)x(0,t)\label{PHS-c}\\[4.5pt]
0&=W_{B,2}\mathcal{H}(0)x(0,t)\label{PHS-d}\\[4.5pt]
y(t)&=W_C\mathcal{H}(0)x(0,t)\label{PHS-e}
\end{align}
\end{subequations}
for $\xi\in[0,\infty)$ and $t\geqslant0$, where $W_{B,1}\in\mathbb{C}^{p\times n}$, $W_{B,2}\in\mathbb{C}^{(n_--p)\times n}$ with $1\leqslant p\leqslant n_-$ and $W_C\in\mathbb{C}^{q\times n}$ for $1\leqslant q\leqslant n_+$ with $n_+=n-n_-$. If $p=n_-$ we understand that equation \eqref{PHS-d} is dropped. Above, $n_-$ stands, as in Section \ref{SEC:GEN}, for the number of negative eigenvalues of $P_1$ which we throughout this section assume to be strictly positive. Notice that Section \ref{SEC:GEN} showed that without this assumptions we get a generator without boundary conditions, and therefore a discussion of boundary control and measurement leads only to pathological cases. Similar reasons lead to the assumption $p$, $q\geqslant1$ that we made above.

\smallskip

\begin{lem}\label{PHS-BS} Let the general assumptions of Theorem \ref{PHS-THM} be satisfied with $W_B=[W_{B,1}\;\;W_{B,2}]^T\in\mathbb{C}^{n_-\times n}$ and let $\mathcal{H}\colon \Ls^2_{\mathcal{H}}(0,\infty)\rightarrow \Ls^2_{\mathcal{H}}(0,\infty)$ be bounded. Assume that the equivalent conditions in Theorem \ref{PHS-THM}(i)--(iii) are satisfied. We define $\mathcal{A}\colon D(\mathcal{A})\rightarrow \Ls^2_{\mathcal{H}}(0,\infty)$, $\mathcal{B}\colon D(\mathcal{B})\rightarrow\mathbb{C}^{p}$ and $\mathcal{C}\colon D(\mathcal{C})\rightarrow\mathbb{C}^{q}$ via
$$
\mathcal{A}x=P_1(\mathcal{H}x)'+P_0\mathcal{H}x,\;\;\mathcal{B}x=W_{B,1}(\mathcal{H}x)(0),\;\;\mathcal{C}x=W_{C}(\mathcal{H}x)(0)
$$
where the domains are given by $D(\mathcal{C})=D(\mathcal{B})=D(\mathcal{A})=\{x\in \Ls^2_{\mathcal{H}}(0,\infty)\:;\:\mathcal{H}x\in \AC[0,\infty),\;(\mathcal{H}x)'\in \Ls^2_{\mathcal{H}}(0,\infty),\;W_{B,2}(\mathcal{H}x)(0)=0\}$. Then
$$
(\Sigma_{\mathcal{H}})\;\;\;
\begin{cases}
\;\dot{x}(t)=\mathcal{A}x,\\
\;u(t)=\mathcal{B}x,\\
\;y(t)=\mathcal{C}x,
\end{cases}
$$
defines a boundary control system in the sense of Definition \ref{JZ-ASS}.
\end{lem}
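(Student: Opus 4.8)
The plan is to verify the four requirements of Definition~\ref{JZ-ASS} in turn, taking $X=\Ls^2_{\mathcal{H}}(0,\infty)$, $U=\mathbb{C}^p$ and $Y=\mathbb{C}^q$. Requirement~(i) is routine: the three operators are linear, their domains coincide so that $D(\mathcal{A})\subseteq D(\mathcal{B})$ holds trivially, and $X$, $U$, $Y$ are Hilbert spaces; that $\mathcal{A}$ actually maps into $X$ is guaranteed by the standing hypothesis that $\mathcal{H}$ is bounded on $\Ls^2_{\mathcal{H}}(0,\infty)$ (so $P_0\mathcal{H}x\in\Ls^2_{\mathcal{H}}(0,\infty)$) together with Corollary~\ref{PHS-COR}. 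For~(ii) I would note that intersecting $D(\mathcal{A})$ with $\ker\mathcal{B}$ adds the condition $W_{B,1}(\mathcal{H}x)(0)=0$ to the condition $W_{B,2}(\mathcal{H}x)(0)=0$ already built into $D(\mathcal{A})$; the two together read precisely $W_B(\mathcal{H}x)(0)=0$ with $W_B=[W_{B,1}\;\;W_{B,2}]^T$. Hence $\mathcal{A}|_{D(\mathcal{A})\cap\ker\mathcal{B}}$ coincides with the operator $A_{\mathcal{H}}$ of Corollary~\ref{PHS-COR}, which generates a $\Cnull$-semigroup by the assumed equivalent conditions of Theorem~\ref{PHS-THM}(i)--(iii).

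For~(iii) the plan is to imitate the lifting built in Lemma~\ref{BCS-LEM}. Because $\rank W_B=n_-$, the linear map $v\mapsto W_Bv$ is surjective onto $\mathbb{C}^{n_-}$, so a fixed right inverse assigns to each $u\in\mathbb{C}^p$ a vector $v=v(u)\in\mathbb{C}^n$, depending linearly and boundedly on $u$, with $W_{B,1}v(u)=u$ and $W_{B,2}v(u)=0$. Choosing $\varphi\in C^{\infty}[0,\infty)$ with compact support and $\varphi(0)=1$, I would set $Bu:=\mathcal{H}^{-1}\varphi\,v(u)$, so that $\mathcal{H}Bu=\varphi\,v(u)$ is smooth with compact support and $(\mathcal{H}Bu)(0)=v(u)$. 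This yields $W_{B,2}(\mathcal{H}Bu)(0)=0$, hence $Bu\in D(\mathcal{A})$, and $\mathcal{B}Bu=W_{B,1}v(u)=u$. Boundedness of $B$ and of $u\mapsto\mathcal{A}Bu=P_1\varphi'v(u)+P_0\varphi\,v(u)$ into $\Ls^2_{\mathcal{H}}(0,\infty)$ is then immediate, since all functions involved are supported in $\supp\varphi$, on which $\mathcal{H}$ and $\mathcal{H}^{-1}$ are bounded by continuity and positivity.

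Requirement~(iv), the graph-norm boundedness of $\mathcal{C}x=W_C(\mathcal{H}x)(0)$ on $D(A)$, is the genuinely technical point and I expect it to be the main obstacle, because $\mathcal{H}$ may degenerate and no global equivalence of $\|\cdot\|_{\Ls^2_{\mathcal{H}}(0,\infty)}$ and $\|\cdot\|_{\Ls^2(0,\infty)}$ is available, so the boundary trace must be estimated purely locally. Writing $h:=\mathcal{H}x\in\AC[0,\infty)$ and fixing $\psi\in C^{\infty}[0,\infty)$ with $\supp\psi\subseteq[0,M]$ and $\psi(0)=1$, the fundamental theorem of calculus from Lemma~\ref{AC-LEM}(iii) gives $h(0)=-\int_0^M(\psi' h+\psi h')\,\dd\xi$. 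I would bound $\int_0^M|h|$ and $\int_0^M|h'|$ by Cauchy--Schwarz against $\int_0^M|h|^2$ and $\int_0^M|h'|^2$, and control these last two integrals by the graph norm using only local estimates on $[0,M]$: the pointwise inequality $h^{\star}h\leqslant\lambda_{\max}(\mathcal{H})\,h^{\star}\mathcal{H}^{-1}h$ gives $\int_0^M|h|^2\leqslant C_M\|x\|_{\Ls^2_{\mathcal{H}}(0,\infty)}^2$, while $h'=(\mathcal{H}x)'=P_1^{-1}(\mathcal{A}x-P_0\mathcal{H}x)$ combined with the equivalence of $\|\cdot\|_{\Ls^2(0,M)}$ and $\|\cdot\|_{\Ls^2_{\mathcal{H}}(0,M)}$ gives $\int_0^M|h'|^2\leqslant C(\|\mathcal{A}x\|_{\Ls^2_{\mathcal{H}}(0,\infty)}^2+\|x\|_{\Ls^2_{\mathcal{H}}(0,\infty)}^2)$. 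Putting the pieces together produces $|(\mathcal{H}x)(0)|\leqslant C(\|x\|_{\Ls^2_{\mathcal{H}}(0,\infty)}+\|Ax\|_{\Ls^2_{\mathcal{H}}(0,\infty)})$, whence $|\mathcal{C}x|\leqslant\|W_C\|\,|(\mathcal{H}x)(0)|$ is bounded for the graph norm, completing the verification of Definition~\ref{JZ-ASS}.
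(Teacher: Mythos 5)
Your proposal is correct and follows essentially the same route as the paper's proof: condition (ii) is obtained from Corollary \ref{PHS-COR} exactly as in the paper, and your lifting operator for condition (iii) — a compactly supported cutoff $\varphi$ times a bounded right inverse of $W_B$ that sends $u$ to some $v(u)$ with $W_{B,1}v(u)=u$, $W_{B,2}v(u)=0$ — is the same device the paper uses with the explicit choice $W_B^{\star}(W_BW_B^{\star})^{-1}\diag(I_p,0)$, the only cosmetic difference being that you multiply by the pointwise inverse $\mathcal{H}(\xi)^{-1}$ (so $\mathcal{H}Bu=\varphi v(u)$ exactly) while the paper uses the constant matrix $\mathcal{H}(0)^{-1}$, both of which land in $D(\mathcal{A})$ and give $\mathcal{B}Bu=u$. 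The one place you go beyond the paper is condition (iv), where the paper merely asserts that $\mathcal{C}$ is continuous for the graph norm while you prove it via the cutoff--fundamental-theorem-of-calculus--Cauchy--Schwarz trace estimate using the local equivalence of $\|\cdot\|_{\Ls^2_{\mathcal{H}}(0,M)}$ and $\|\cdot\|_{\Ls^2(0,M)}$; that argument is sound and correctly handles the possible degeneracy of $\mathcal{H}$ at infinity.
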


\begin{proof} By Corollary \ref{PHS-COR}, $\mathcal{A}|_{\operatorname{ker}\mathcal{B}}\colon D(\mathcal{A})\cap\operatorname{ker}\mathcal{B}\rightarrow \Ls^2_{\mathcal{H}}(0,\infty)$ generates a $\Cnull$-semigroup. Now we select $\varphi\in C^{\infty}[0,\infty)$ with compact support and such that $\varphi(0)=1$. Since $\operatorname{rank}W_B=n_-$, we can put $S:=W_B^{\star}(W_BW_B^{\star})^{-1}\diag(I_p,0)\in\mathbb{C}^{n\times n_-}$, where $I_p$ is the identity matrix in $\mathbb{C}^{p\times p}$ and $0$ denotes the zero matrix in $\mathbb{C}^{(n_--p)\times (n_--p)}$. We get
$$
W_BS=\begin{bmatrix}W_{B,1}\\W_{B,2}\end{bmatrix}\begin{bmatrix}S_{1}&S_{2}\end{bmatrix}=\begin{bmatrix}I_p&0\\0&0\end{bmatrix}\in\mathbb{C}^{n_-\times n_-}
$$
where $S_{1}\in\mathbb{C}^{n\times p}$. If $p=n_-$ we omit the zeros and $S_2$ in the equation above. We define
$$
B\colon\mathbb{C}^p\rightarrow \Ls^2_{\mathcal{H}}(0,\infty),\;\;(Bu)(\xi)=\mathcal{H}(0)^{-1}S_{1}\varphi(\xi)u
$$
which is a linear operator with values in $D(\mathcal{A})$ due to the properties of $\varphi$ and since
$$
W_{B,2}(\mathcal{H}Bu)(0)=W_{B,2}\mathcal{H}(0)\mathcal{H}(0)^{-1}S_{1}\varphi(0)u=W_{B,2}S_{1}u=0
$$
holds. Finally we have
$$
\mathcal{B}Bu=W_{B,1}\mathcal{H}(0)(Bu)(0)=W_{B,1}\mathcal{H}(0)\mathcal{H}(0)^{-1}S_{11}\varphi(0)u=W_{B,1}S_{1}u=u
$$
for every $u\in\mathbb{C}^p$. Moreover, $\mathcal{A}B\colon\mathbb{C}^{p}\rightarrow \Ls^2_{|\Delta|}(0,\infty)$ and $\mathcal{C}\colon D(\mathcal{C})\rightarrow\mathcal{C}^{q}$ are linear and continuous. Consequently $(\Sigma_{\mathcal{H}})$ defines a boundary control system in the sense of Definition \ref{JZ-ASS}.
\end{proof}

\smallskip

\begin{thm}\label{WP-THM} Let the general assumptions of Theorem \ref{PHS-THM} be satisfied with $W_B=[W_{B,1}\;\;W_{B,2}]^T\in\mathbb{C}^{n_-\times n}$ and let $\mathcal{H}\colon \Ls^2_{\mathcal{H}}(0,\infty)\rightarrow \Ls^2_{\mathcal{H}}(0,\infty)$ be bounded. Assume that the equivalent conditions in Theorem \ref{PHS-THM}(i)--(iii) are satisfied. Then the port-Hamiltonian system \eqref{eq:PHS}, more formally the boundary control system of Lemma \ref{PHS-BS}, is well-posed.
\end{thm}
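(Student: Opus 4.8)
The plan is not to verify the energy balance of Proposition~\ref{PROP-WP} directly: for general $W_{B,1}$, $W_C$ the identity $\frac{\dd}{\dd t}\|x\|^2=\|u\|^2-\|y\|^2$ fails, since the system is scattering passive only for balanced boundary data and the summand $P_0\mathcal{H}$ contributes a further term. Instead I would realise $(\Sigma_{\mathcal{H}})$ as the image of an already well-posed system under a chain of transformations that each preserve the estimate \eqref{DFN-WP}. The reference object is the system $(\Sigma')$ occurring in the proof of Proposition~\ref{WEISS}, which was shown there to be well-posed on $\Ls^2_{|\Delta|}(0,\infty)$: it is obtained from the scattering-passive diagonal system $(\Sigma)$ of Lemma~\ref{LEM-BCS-WELLPOSED}---with full boundary input $\eta:=\Theta(0)g_-(0)$, full output $\zeta:=\Lambda(0)g_+(0)$ and transfer function zero---by the admissible output feedback $F=K^{-1}Q$ of the Weiss Theorem~\ref{WEISS-THM}, so that its input is $\tilde u:=\eta+K^{-1}Q\zeta\in\mathbb{C}^{n_-}$ and its output is $\zeta$.

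First I would pass to diagonal coordinates. By assumption~(b) of Theorem~\ref{PHS-THM} the multiplication $S\colon\Ls^2_{\mathcal{H}}(0,\infty)\to\Ls^2_{|\Delta|}(0,\infty)$ is an isomorphism, so the norms $\|\cdot\|_{\Ls^2_{\mathcal{H}}(0,\infty)}$ and $\|S\cdot\|_{\Ls^2_{|\Delta|}(0,\infty)}$ are equivalent and \eqref{DFN-WP} is preserved, up to the constant $m_\tau$, under conjugation by $S$. Under this conjugation \eqref{VAR-TRANS} turns the generator $P_1(\mathcal{H}\,\cdot\,)'$ into $A_{\Delta}+B$ with $B$ bounded, while the summand $P_0\mathcal{H}$---which is bounded on $\Ls^2_{\mathcal{H}}(0,\infty)$ by the hypothesis of Corollary~\ref{PHS-COR}---turns into the bounded operator $SP_0\mathcal{H}S^{-1}$ on $\Ls^2_{|\Delta|}(0,\infty)$. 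The boundary functionals transform linearly through $g(0)=S(0)x(0)$: as computed in the proof of Theorem~\ref{PHS-THM}, $W_B\mathcal{H}(0)S(0)^{-1}g(0)=K\eta+Q\zeta$, so the input $u=W_{B,1}\mathcal{H}(0)x(0)$, the constraint $W_{B,2}\mathcal{H}(0)x(0)=0$ and the output $y=W_C\mathcal{H}(0)x(0)$ all become bounded linear functionals of $(\eta,\zeta)$, and the generator constraint $W_B\mathcal{H}(0)x(0)=0$ becomes exactly the kernel condition $K\eta+Q\zeta=0$ of $(\Sigma')$.

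Next I would match the transformed system with $(\Sigma')$. Stacking $W_B=[W_{B,1}\;\;W_{B,2}]^T$, the identity $K\tilde u=K\eta+Q\zeta=W_B\mathcal{H}(0)S(0)^{-1}g(0)$ shows that $u$ consists of the first $p$ rows of $K\tilde u$ and the constraint of the remaining $n_--p$ rows; since $K$ is invertible this is equivalent to feeding into $(\Sigma')$ the input $\tilde u=K^{-1}[u\;\;0]^T$, that is, a bounded invertible reparametrisation followed by restriction to a subspace, under which the right-hand side of \eqref{DFN-WP} changes only by a constant. As $\eta=\tilde u-K^{-1}Q\zeta$ is a bounded combination of $\tilde u$ and $\zeta$, the output $y$ is a bounded combination of $u$ and $\zeta$, whence $\int_0^\tau\|y(t)\|^2\,\dd t$ is dominated by a constant multiple of $\|x_0\|^2+\int_0^\tau\|u(t)\|^2\,\dd t$ via the estimate already available for $(\Sigma')$. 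Finally the two bounded perturbations $B$ and $SP_0\mathcal{H}S^{-1}$ of the generator are absorbed using that well-posedness of a boundary control system is stable under bounded additive perturbations of $\mathcal{A}$ (with $\mathcal{B}$ and $\mathcal{C}$ left unchanged), in the sense of \cite[Chapter~13]{JZ}; combining the steps yields \eqref{DFN-WP} for $(\Sigma_{\mathcal{H}})$.

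I expect the main obstacle to be twofold. On the bookkeeping side one must check that the classical solutions of $(\Sigma_{\mathcal{H}})$ correspond, under $S$ and the input reparametrisation, precisely to the classical solutions of $(\Sigma')$ driven by $\tilde u=K^{-1}[u\;\;0]^T$; this requires reconciling the lifting operators $B$ of Lemma~\ref{PHS-BS} and of Proposition~\ref{WEISS} and verifying that the domain constraint coming from $W_{B,2}$ matches the vanishing of the last $n_--p$ rows of $K\tilde u$. On the theory side, the stability of well-posedness under a bounded perturbation of the generator is not the elementary criterion of Proposition~\ref{PROP-WP} but belongs to the admissibility calculus of well-posed linear systems; rather than reprove it I would invoke it from \cite{Staffans2005,TW2014,JZ}.
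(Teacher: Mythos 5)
Your proposal is correct and follows essentially the same route as the paper's proof: conjugate by $S$ into diagonal coordinates, reduce to the well-posed system $(\Sigma')$ from the proof of Proposition \ref{WEISS} via the zero-padded, invertibly reparametrised input $\tilde u=K^{-1}[u\;\;0]^T$, recover $y$ as a bounded combination of $u$ and the output of $(\Sigma')$, and absorb $P_0\mathcal{H}$ together with the conjugation remainder by bounded-perturbation invariance of well-posedness from \cite[Chapter 13]{JZ}. If anything, you are more explicit than the paper on two points it passes over quickly, namely that the conjugated generator is $A_{\Delta}+B$ rather than $A_{\Delta}$, and that the trace component $\eta=\Theta(0)g_-(0)$ is recoverable as $\tilde u-K^{-1}Q\zeta$, so that enlarging the output map of $(\Sigma')$ to the full boundary trace is harmless.
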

\begin{proof} Due to our assumptions, the map $P_0\mathcal{H}\colon \Ls^2_{\mathcal{H}}(0,\infty)\rightarrow \Ls^2_{\mathcal{H}}(0,\infty)$ is bounded. In view of \cite[Lemma 13.1.14]{JZ} it is thus enough to prove the well-posedness of $(\Sigma_{\mathcal{H}})$ from Lemma \ref{PHS-BS} for $P_0=0$.

\smallskip

As in the proof of Theorem \ref{PHS-THM} we use the transformation of variables $g=Sx$ and use the notation established in Section \ref{SEC:GEN}. We define $\tilde{W}_{B,1}:=W_{B,1}P_1^{-1}S^{-1}(0)$, $\tilde{W}_{B,2}:=W_{B,2}P_1^{-1}S^{-1}(0)$ and $\tilde{W}_{C}:=W_{C}P_1^{-1}S^{-1}(0)$. Then the boundary conditions \eqref{PHS-c}--\eqref{PHS-e} can be transformed into $u(t)=\tilde{W}_{B,1}(\Delta{}g)(0)$, $0=\tilde{W}_{B,2}(\Delta{}g)(0)$ and $y(t)=\tilde{W}_{C}(\Delta{}g)(0)$. Since $P_1^{-1}S^{-1}(0)$ are invertible, $\tilde{W}_B=[\tilde{W}_{B,1}\;\;\tilde{W}_{B,2}]^T$ has rank $n_-$.

\medskip

The transformation of variables thus leads to the system
$$
(\Sigma_{\Delta})\;\;\;
\begin{cases}
\;\dot{g}(t)=\mathcal{A}g,\\
\;u(t)=\mathcal{B}g,\\
\;y(t)=\mathcal{C}g,
\end{cases}
$$
with $\mathcal{A}\colon D(\mathcal{A})\rightarrow \Ls^2_{|\Delta|}(0,\infty)$, $\mathcal{B}\colon D(\mathcal{B})\rightarrow\mathbb{C}^{p}$ and $\mathcal{C}\colon D(\mathcal{C})\rightarrow\mathbb{C}^{q}$ which are defined via
$$
\mathcal{A}g=(\Delta{}g)',\;\;\mathcal{B}g=\tilde{W}_{B,1}(\Delta{}g)(0),\;\;\mathcal{C}g=\tilde{W}_{C}(\Delta{}g)(0)
$$
where the domains are given by $D(\mathcal{C})=D(\mathcal{B})=D(\mathcal{A})=\{g\in \Ls^2_{|\Delta|}(0,\infty)\:;\:\Delta{}g\in \AC[0,\infty),\;(\Delta{}g)'\in \Ls^2_{\Delta{}}(0,\infty),\;\tilde{W}_{B,2}(\Delta{}g)(0)=0\}$. It is thus enough to show that $(\Sigma_{\Delta})$ is well-posed. 

\medskip

We define a third system
$$
(\tilde{\Sigma}_{\Delta})\;\;\;
\begin{cases}
\;\dot{\tilde{g}}(t)=\tilde{\mathcal{A}}\tilde{g},\\
\;\tilde{u}(t)=\tilde{\mathcal{B}}\tilde{g},\\
\;\tilde{y}(t)=\tilde{\mathcal{C}}\tilde{g},
\end{cases}
$$
via $\tilde{\mathcal{A}}\colon D(\tilde{\mathcal{A}})\rightarrow \Ls^2_{|\Delta|}(0,\infty)$, $\tilde{\mathcal{B}}\colon D(\tilde{\mathcal{B}})\rightarrow\mathbb{C}^p$ and $\tilde{\mathcal{C}}\colon D(\tilde{\mathcal{C}})\rightarrow\mathbb{C}^q$ given by
$$
\tilde{\mathcal{A}}\tilde{g}=(\Delta{}\tilde{g})',\;\;\tilde{\mathcal{B}}\tilde{g}=[\tilde{W}_{B,1}\;\;\tilde{W}_{B,2}]^T(\Delta{}\tilde{g})(0),\;\;\tilde{\mathcal{C}}\tilde{g}=\tilde{W}_{C}(\Delta{}\tilde{g})(0)
$$
where the domains are $D(\tilde{\mathcal{B}})=D(\tilde{\mathcal{A}})=\{\tilde{g}\in \Ls^2_{|\Delta|}(0,\infty)\:;\:\Delta{}\tilde{g}\in \AC[0,\infty),\;(\Delta{}\tilde{g})'\in \Ls^2_{\Delta{}}(0,\infty)\}$ and $D(\tilde{\mathcal{C}})=D(\mathcal{C})$. If $p=n_-$, then we put $(\tilde{\Sigma}_{\Delta})=(\Sigma_{\Delta})$. It can be seen by the same arguments as in Lemma \ref{PHS-BS} that the above is a boundary control system.

\smallskip

Assume now that $(\tilde{\Sigma}_{\Delta})$ is well-posed, cf.~efinition \ref{JZ-WP}. Then there exists $\tau>0$ and $m_\tau>0$ such that the estimate
\begin{equation}\label{EQ-WP}
\|\tilde{g}(\tau)\|_{\Ls^2_{|\Delta|}(0,\infty}^2+\int_0^{\tau}\|\tilde{y}(t)\|^2\dd t\leqslant m_{\tau}\Big(\|\tilde{g}_0\|_{\Ls^2_{|\Delta|}(0,\infty)}^2+\int_0^{\tau}\|\tilde{u}(t)\|^2\dd t\Big)
\end{equation}
is true for any $g_0\in D(\tilde{\mathcal{A}})$, $\tilde{u}\in C^2([0,\tau],\mathbb{C}^{n_-})$ with $\tilde{u}(0)=\tilde{\mathcal{B}}g_0$ where $\tilde{g}$ denotes the classical solution and $\tilde{y}=\tilde{\mathcal{C}}g$ is the corresponding output. We claim that \eqref{EQ-WP} holds for the system $(\Sigma_{\Delta})$. We select $\tau$, $m_{\tau}>0$ as above. Let $g_0\in D(\mathcal{A})\subseteq D(\tilde{\mathcal{A}})$, $u\in C^2([0,\tau],\mathbb{C}^p)$ and $y\in D(\mathcal{C})$ be given. We put $\tilde{g}_0=g_0$, $\tilde{u}=[u\;\;0]^T\in C^2([0,\tau],\mathbb{C}^{n_-})$, and $\tilde{y}=y$. Then the classical solution $g$ corresponding to $(\Sigma_{\Delta})$ coincides with the classical solution $\tilde{g}$ corresponding to $(\tilde{\Sigma}_{\Delta})$. Thus, $(\tilde{\Sigma}_{\Delta})$ is also well-posed.

\smallskip

It remains to show that $(\tilde{\Sigma}_{\Delta})$ is well-posed. We choose an invertible matrix $P\in\mathbb{C}^{n_-\times n_-}$ such that
$$
P\begin{bmatrix}\tilde{W}_{B,1}\\\tilde{W}_{B,2}\end{bmatrix}=[Q\;\;K]\in\mathbb{C}^{n_-\times n}
$$
with $Q\in\mathbb{C}^{n_-\times n_+}$ arbitrary and $K\in\mathbb{C}^{n_-\times n_-}$ invertible. The proof of Proposition \ref{WEISS} showed that the system 
$$
(\Sigma')\;\;\;
\begin{cases}
\;\dot{\tilde{g}}(t)=(\Delta{}\tilde{g})',\\
\;\tilde{u}(t)=[K^{-1}Q\;\;I_{n_-}](\Delta{}\tilde{g})(0),\\
\;\tilde{y}(t)=(\Delta{}\tilde{g})(0),,
\end{cases}
$$
is well-posed. As $P^{-1}K\colon\mathbb{C}^{n_-}\rightarrow\mathbb{C}^{n_-}$, $u\mapsto P^{-1}Ku$, is an isomorphism, and $\diag(W_C,0)\colon\mathbb{C}^{q}\rightarrow\mathbb{C}^{n_-}$, $y\mapsto\diag(W_C,0)y$ is continuous, we get that $(\tilde{\Sigma}_{\Delta})$ is well-posed.
\end{proof}


\section{Examples}\label{SEC:EX}\smallskip

\begin{ex}\label{EX:1} \textbf{(Weighted transport equation)} We consider the one-dimensional \textquotedblleft{}weighted transport equation\textquotedblright{} on $(0,\infty)$, i.e.,
$$
\frac{\partial{}x}{\partial{}t}(\xi,t)=-\frac{\partial\mathcal{H}x}{\partial\xi}(\xi,t),\;\;x(\xi,0)=x_0(\xi),
$$
where $\mathcal{H}$ is continuous, strictly positive and satisfies $\frac{1}{\mathcal{H}}\not\in \Ls^1(0,\infty)$. With $\uptheta=-\mathcal{H}$ we see that the above fits into the situation of Proposition \ref{PROP-SEMI-MINUS} which yields that
\begin{eqnarray*}
Ax &=&-\frac{\partial}{\partial\xi}(\mathcal{H}x) \\
D(A) &= &\bigl\{x\in \Ls^2_{\mathcal{H}}(0,\infty)\:;\:\mathcal{H}x\in \AC[0,\infty),\; (\mathcal{H}x)'\in \Ls^2_{\mathcal{H}}(0,\infty)\text{ and }(\mathcal{H}x)(0)=0\bigr\}
\end{eqnarray*}
generates a $\Cnull$-semigroup on $\Ls^2_{\mathcal{H}}(0,\infty)$. Similarly, the one-dimensional weighted transport equation on $(0,\infty)$ with positive sign gives rise to a $\Cnull$-semigroup via Proposition \ref{PROP-SEMI-PLUS}. Notice that in the latter case there is no boundary condition at zero.
\end{ex}

\medskip

\begin{ex} \textbf{(Transport equation on a network)} We consider a network of five transport equations, each defined on $[0,\infty)$, i.e.,
$$
\frac{\partial x_j}{\partial t}(\xi,t)=p_j\frac{\partial x_j}{\partial \xi}(\xi,t),\;\;x_j(\xi,0)=x_j^{(0)}(\xi)\,\text{ for }\xi\in[0,\infty),\;\;t\geqslant0 \text{ and }\;j=1,\dots,5,
$$
with $p_j=+1$ for $j=1,\,2,\,3$ and $p_j=-1$ for $j=4,\,5$ and coupled by the equations
$$
x_4(0,t)=x_2(0,t)-x_3(0,t)\;\text{ and }\;x_5(0,t)=x_2(0,t)-x_1(0,t)
$$
relating the inputs on the outgoing edges $4$ and $5$ with the output of the incoming edges $1$, $2$ and $3$.

\smallskip

\begin{center}
\begin{tikzpicture}
[nodeDecorate/.style={circle,draw,fill=black!100, inner sep=0pt,minimum width=6pt}]

\node (v) at (5,1) [nodeDecorate] {\scriptsize$v$};
\node (a) at (2,2) [] {\footnotesize$1$};
\coordinate (a1) at (4,2) [] {};
\node (b) at (2,1) [] {\footnotesize$2$};
\coordinate (b1) at (4,1) [] {};
\node (c) at (2,0) [] {\footnotesize$3$};
\coordinate (c1) at (4,0) [] {};
\node (d) at (8,0.3) [] {\footnotesize$5$};
\coordinate (d1) at (6,0.3) [] {};
\node (e) at (8,1.7) [] {\footnotesize$4$};
\coordinate (e1) at (6,1.7) [] {};

\tikzstyle{EdgeStyle}=[-,>=stealth]
\tikzstyle{LabelStyle}=[fill=white]

\Edge[label={\ding{234}}](a)(a1)\Edge[](a1)(v)
\Edge[label={\ding{234}}](b)(b1)\Edge[](b1)(v)
\Edge[label={\ding{234}}](c)(c1)\Edge[](c1)(v)
\Edge[label={\ding{234}}](d)(d1)\Edge[](d1)(v)
\Edge[label={\ding{234}}](e)(e1)\Edge[](e1)(v)
\end{tikzpicture}

\smallskip

{

\small

\textbf{Figure 1.\,}Network of five transport equations on $[0,\infty)$ which\\are coupled via boundary conditions imposed at a central node.

}
\end{center}

\medskip

We use the notation $x=[x_1\cdots x_5]^T$, put $P_1:=\diag(p_1,\dots,p_5)$, and consider the operator
\begin{eqnarray*}
Ax &= &P_1\frac{\partial}{\partial \xi}x \\
D(A) &= &\bigl\{x\in \Ls^2(0,\infty)\:;\:x\in \AC[0,\infty), \; x'\in \Ls^2(0,\infty)\text{ and }W_Bx(0)=0\bigr\}
\end{eqnarray*}
with the matrix
$$
W_B=\begin{bmatrix}
1&-1 &0 &0 &1 \\
0&-1 &1 &1 &0 \\
\end{bmatrix}.
$$
As $P_1\mathcal{H}=P_1$ is diagonal, $n_+=3$, and $n_-=2$, we are in the situation of Proposition \ref{WEISS} if we rewrite the boundary condition as
$$
0=W_Bx(0)=Qx_+(0)+Kx_-(0)=
\begin{bmatrix}
1 &-1 &0 \\
0 &-1 &1 \\
\end{bmatrix}
\begin{bmatrix}
x_1(0,t)\\
x_2(0,t)\\
x_3(0,t)
\end{bmatrix}
+
\begin{bmatrix}
0&1 \\
1&0 \\
\end{bmatrix}
\begin{bmatrix}
x_4(0,t)\\
x_5(0,t)\\
\end{bmatrix}
$$
with an invertible matrix $K$. It follows that $A\colon D(A)\rightarrow \Ls^2(0,\infty)$ generates a $\Cnull$-semigroup.

\smallskip

We remark that we can modify the above and consider a network of \textquotedblleft{}weighted transport equations\textquotedblright{} in the spirit of Example \ref{EX:1} by adding a Hamiltonian $\mathcal{H}=\diag(h_1,\dots,h_5)$ with functions $h_j\colon[0,\infty)\rightarrow\mathbb{R}$ that are continuous, strictly positive and bounded.

\smallskip

Notice that in the initial setting of unweighted transport equations we could also use Theorem \ref{PHS-THM} instead of Proposition \ref{WEISS} and obtain exactly the same result. In this case we see that $U_2=K$ is invertible or $W_BZ^{-1}(0)=W_B\mathbb{C}^{2}=\mathbb{C}^2$. In the weighted case, Theorem \ref{PHS-THM} would require continuously differentiable $h_j$'s, whereas Proposition \ref{WEISS} requires only continuity. Both results require boundedness.

\smallskip

The above example is related to the study of so-called metric graphs, see, e.g., Mugnolo \cite{Mug}. We mention that a result by Schubert et al.~\cite[Section 4.2]{BS}, with some slight adjustments, allows to treat the situation of even countable many coupled transport equations on $[0,\infty)$. Their theorem however characterizes the existence of unitary $\Cnull$-semigroups for which an equal number of outgoing and ingoing edges is necessary.
\end{ex}

\medskip

\begin{ex}\label{EX:3} \textbf{(Vibrating string)} Consider an undamped vibrating string of infinite length, i.e.,
\begin{equation}\label{VIB-1}
\frac{\partial^2w}{\partial{}^2t}(\xi,t)=-\frac{1}{\rho(\xi)}\frac{\partial}{\partial\xi}\Bigl(T(\xi)\frac{\partial{}w}{\partial\xi}(\xi,t)\Bigr)
\end{equation}
where $\xi\in[0,\infty)$ is the spatial variable, $w(\xi,t)$ is the vertical displacement of the string at place $\xi$ and time $t$, $T(\xi)>0$ is Young's modulus of the string, and $\rho(\xi)>0$ is the mass density. Both may vary along the string in a continuously differentiable way. We choose the momentum $x_1=\rho\frac{\partial w}{\partial t}$ and the strain $x_1=\frac{\partial w}{\partial \xi}$ as the state variables. Then, \eqref{VIB-1} can be written as
\begin{equation}\label{VIB-2}
\frac{\partial}{\partial t}\begin{bmatrix}x_1(\xi,t)\\x_2(\xi,t)\end{bmatrix}=\begin{bmatrix}0 &1\\1&0\end{bmatrix}\frac{\partial}{\partial\xi}\Bigl(\begin{bmatrix}\rho(\xi)^{-1} & 0\\0&T(\xi)\end{bmatrix}\begin{bmatrix}x_1(\xi,t)\\x_2(\xi,t)\end{bmatrix}\Bigr)
\end{equation}
which is of the form considered in Theorem \ref{PHS-THM} if we put
$$
P_1:=\begin{bmatrix}0 & 1\\1&0\end{bmatrix},\;\;P_0:=\begin{bmatrix}0 & 0\\0&0\end{bmatrix},\,\text{ and }\;\mathcal{H}(\xi):=\begin{bmatrix}\rho(\xi)^{-1} & 0\\0&T(\xi)\end{bmatrix}.
$$
Diagonalizing $P_1\mathcal{H}=S^{-1}\Delta S$ leads to
$$
S=\begin{bmatrix}(2\gamma)^{-1} & \rho/2\\-(2\gamma)^{-1}&\rho/2\end{bmatrix},\;\;S^{-1}=\begin{bmatrix}\gamma & -\gamma\\\rho^{-1}&\rho^{-1}\end{bmatrix},\,\text{ and }\;\Delta=\begin{bmatrix}\gamma & 0\\0&-\gamma\end{bmatrix},
$$
where $\gamma:=(T/\rho)^{1/2}$. In particular, we have $n_+=n_-=1$ and
$$
Z^-(0)=\operatorname{span}\begin{bmatrix}-\gamma(0)\\\rho(0)^{-1}\end{bmatrix}.
$$
We endow \eqref{VIB-2} with the boundary condition
$$
W_B(\mathcal{H}x)(0)=0
$$
given by the matrix $W_B=[w_1\;\;w_2]\in\mathbb{C}^{1\times2}$ of rank $1$, or, equivalently, the wave equation \eqref{VIB-1} with the boundary conditions
$$
\begin{bmatrix}w_1\;\;w_2\end{bmatrix}\begin{bmatrix}\rho\frac{\partial w}{\partial t}(0,t)\\\frac{\partial w}{\partial\xi}(0,t)\end{bmatrix}=0.
$$
The condition in Theorem \ref{PHS-THM}(iii), i.e., $W_B\mathcal{H}(0)Z^{-1}(0)=\mathbb{C}^{1}$, holds if and only if
$$
0\not=\begin{bmatrix}w_1 & w_2\end{bmatrix}
\begin{bmatrix}\rho(0)^{-1} & 0\\0&T(0)\end{bmatrix}
\begin{bmatrix}-\gamma(0)\\\rho(0)^{-1}\end{bmatrix}=
-w_1\rho(0)^{-1}\gamma(0)+w_2T(0)\rho(0)^{-1}
$$
which is equivalent to
\begin{equation}\label{COND-1}
w_1\gamma(0)\not=w_2T(0).
\end{equation}
The operator $B\colon \Ls^2_{|\Delta|}(0,\infty)\rightarrow \Ls^2_{|\Delta|}(0,\infty)$ from Theorem \ref{PHS-THM} is given by multiplication with
$$
S(S^{-1})'\Delta=
\begin{bmatrix}(2\gamma)^{-1} & \rho/2\\-(2\gamma)^{-1}&\rho/2\end{bmatrix}
\begin{bmatrix}\gamma' & -\gamma'\\(\rho^{-1})'&(\rho^{-1})'\end{bmatrix}
\begin{bmatrix}\gamma & 0\\0&-\gamma\end{bmatrix}
=\frac{\delta}{4\gamma\rho^2}\begin{bmatrix}1&1\\-1&-1\end{bmatrix} + \frac{\sigma\gamma}{2\rho}\begin{bmatrix}-1&1\\-1&1\end{bmatrix}
$$
where $\delta:=T'\rho-T\rho'$ and $\sigma:=\rho'$. The map $C\colon \Ls^2_{\mathcal{H}}(0,\infty)\rightarrow \Ls^2_{|\Delta|}(0,\infty)$ from Theorem \ref{PHS-THM} is given by multiplication with
$$
S'P_1\mathcal{H}=
\begin{bmatrix}((2\gamma)^{-1})' & \rho'/2\\-((2\gamma)^{-1})'&\rho'/2\end{bmatrix}
\begin{bmatrix}0 & 1\\1&0\end{bmatrix}
\begin{bmatrix}\rho(\xi)^{-1} & 0\\0&T(\xi)\end{bmatrix}=
\frac{\sigma}{2\rho}\begin{bmatrix}1 & 0\\1&0\end{bmatrix}+\frac{\delta T}{4\gamma^3\rho^2}\begin{bmatrix}0 & -1\\0&1\end{bmatrix}.
$$
The question of interest is, when the operator
\begin{eqnarray*}
A_{\mathcal{H}}x &= &P_1(\mathcal{H}x)'\\
D(A_{\mathcal{H}}) &= &\bigl\{x\in \Ls^2_{\mathcal{H}}(0,\infty)\:;\:\mathcal{H}x\in \AC[0,\infty), \; (\mathcal{H}x)'\in \Ls^2_{\mathcal{H}}(0,\infty)\text{ and }W_B(\mathcal{H}x)(0)=0\bigr\}.
\end{eqnarray*}
generates a $\Cnull$-semigroup. We discuss the following three scenarios.

\medskip

1.~For constant modulus $T(\xi)\equiv{}T>0$ and constant mass density $\rho(\xi)\equiv\rho>0$, we have $\Ls^2_{\mathcal{H}}(0,\infty)=\Ls^2_{|\Delta|}(0,\infty)=\Ls^2(0,\infty)$ algebraically with equivalent norms. As $S$, $B$ and $C$ multiply with constant matrices the assumptions of Theorem \ref{PHS-THM} are satisfied and thus $A_{\mathcal{H}}\colon D(A_{\mathcal{H}})\rightarrow \Ls^2_{\mathcal{H}}(0,\infty)$ generates a $\Cnull$-semigroup if and only if \eqref{COND-1}, i.e., $w_1\gamma\not=w_2T$, holds.

\medskip

2.~Also under the standard assumption of, e.g., \cite{AJ2014, GZM2005, JMZ2015, JZ}, namely
\begin{equation}\label{STANDARD-ASS}
\exists\:m,\,M>0\;\forall\:\xi\in[0,\infty),\,\zeta\in\mathbb{C}^n \colon m|\zeta|^2\leqslant \zeta^{\star}\mathcal{H}(\xi)\zeta\leqslant M|\zeta|^2
\end{equation}
one can see that Theorem \ref{PHS-THM} is applicable, if we additionally assume that $\rho'$ and $T'$ are bounded. Notice that in \cite{AJ2014, GZM2005, JMZ2015, JZ} the estimate in \eqref{STANDARD-ASS} comes \textquotedblleft{}for free\textquotedblright{} if $\mathcal{H}$ is continuous, since in these papers $\xi\in[a,b]$ is considered. In our setting of a non-compact spacial domain this is not the case. Condition \eqref{STANDARD-ASS} implies that $\Delta$ satisfies, with different constants, the same estimate. From this it follows that $\Ls^2_{\mathcal{H}}(0,\infty)=\Ls^2_{|\Delta|}(0,\infty)=\Ls^2(0,\infty)$ are equal algebraically with equivalent norms. The boundedness of $\rho'$ and $T'$ guarantees that $B$, $C\colon \Ls^2(0,\infty)\rightarrow \Ls^2(0,\infty)$ are bounded. Therefore, the question of generation again reduces to \eqref{COND-1} and $A_{\mathcal{H}}\colon D(A_{\mathcal{H}})\rightarrow \Ls^2_{\mathcal{H}}(0,\infty)$ generates a $\Cnull$-semigroup if and only if $w_1\gamma(0)\not=w_2T(0)$ holds.

\medskip

3.~Finally, we want to give an explicit example where $\rho$ and $T$ are bounded, but \textit{not bounded away from zero}, and nevertheless the assumptions of Theorem \ref{PHS-THM} can be verified. Notice that below the Hamiltonian $\mathcal{H}$ does neither satisfy the lower nor the upper estimate in \eqref{STANDARD-ASS}. Let from now on $\rho(\xi)=1/\xi$ and $T(\xi)=1/\xi^3$ for $\xi\geqslant1$. For $\xi\in[0,1)$ we define $\rho(\xi)$ and $T(\xi)$ in a way that $\rho$, $T\colon[0,\infty)\rightarrow(0,\infty)$ are continuously differentiable. For the conditions to be checked, i.e., Theorem \ref{PHS-THM}(a)--(d), then the compact part $[0,1]$ of the domain can be neglected. For $\xi\geqslant1$ we have $\gamma(\xi)=1/\xi$, $\delta(\xi)=-2/\xi^5$ and $\sigma(\xi)=-1/\xi^2$. From this we see that
$$
\mathcal{H}=\begin{bmatrix}\xi&0\\0&1/\xi^3\end{bmatrix}
,\;\;|\Delta|=\begin{bmatrix}1/\xi&0\\0&1/\xi\end{bmatrix}
,\;\;B=\begin{bmatrix}0&-1/\xi^2\\1/\xi^2&0\end{bmatrix},
\text{ and }\; C=\frac{1}{2}\begin{bmatrix} -1/\xi  & 1/\xi^3\\ -1/\xi & -1/\xi^3 \end{bmatrix}
$$
holds for $\xi\geqslant1$ and we observe that Theorem \ref{PHS-THM}(a) holds. We select $K_1\geqslant1$ such that
$$
\|Bg\|_{\Ls^2_{|\Delta|}(0,\infty)}^2=\int_0^{\infty}g^{\star}B^{\star}|\Delta|Bg\dd\lambda\leqslant K_1\int_0^{\infty}g^{\star}|\Delta|g\dd\lambda=K_1\|g\|_{\Ls^2_{|\Delta|}(0,\infty)}^2
$$
holds for all $g\in \Ls^2_{|\Delta|}(0,\infty)$. Furthermore, we select $K_2\geqslant1$ such that
$$
\|Cx\|_{\Ls^2_{|\Delta|}(0,\infty)}^2=\int_0^{\infty}x^{\star}C^{\star}|\Delta|Cx\dd\lambda\leqslant K_2\int_0^{\infty}x^{\star}\mathcal{H}x\dd\lambda=K_2\|x\|_{\Ls^2_{\mathcal{H}}(0,\infty)}^2
$$
holds for all $x\in \Ls^2_{\mathcal{H}}(0,\infty)$. This is possible since
$$
B^{\star}|\Delta|B=\begin{bmatrix}1/\xi^5&0\\0&1/\xi^5\end{bmatrix}\;\text{ and }\; C^{\star}|\Delta|C=\frac{1}{2}\begin{bmatrix}1/\xi^2&0\\0&1/\xi^5\end{bmatrix}
$$
holds for $\xi\geqslant1$. This establishes that Theorem \ref{PHS-THM}(b)--(c) is satisfied. Finally we see that
$$
\|Sx\|_{\Ls^2_{|\Delta|}(0,\infty)}^2=\int_0^{\infty}x^{\star}S^{\star}|\Delta|Sx\dd\lambda=\frac{1}{2}\int_0^{\infty}x^{\star}\mathcal{H}x\dd\lambda=\frac{1}{2}\|x\|_{\Ls^2_{\mathcal{H}}(0,\infty)}^2
$$
holds since we have
$$
S^{\star}|\Delta|S=\begin{bmatrix}(2\gamma)^{-1}&0\\0&\rho^2\gamma/2\end{bmatrix}=
\frac{1}{2}\begin{bmatrix}\xi&0\\0&1/\xi^3\end{bmatrix}=
\frac{1}{2}\mathcal{H}
$$
for $\xi\geqslant1$. This implies that also Theorem \ref{PHS-THM}(b) is satisfied. It follows that the corresponding operator $A_{\mathcal{H}}\colon D(A_{\mathcal{H}})\rightarrow \Ls^2_{\mathcal{H}}(0,\infty)$ generates a $\Cnull$-semigroup if and only if $w_1\gamma(0)\not=w_2T(0)$ holds.

\medskip

There is a rich literature treating the wave equation in the case of variable coefficients, see, e.g., Todorova, Yordanov \cite{Todorova} and the references therein, deriving explicit estimates of the energy norm of solutions. Using our explicit formulas for the semigroups it is not hard to see that for constant $\rho$ and $T$ the semigroup corresponding to \eqref{VIB-1} is not strongly stable in the sense of \cite[Definition V.1.1]{EN}. Further stability results, treating the case of non-constant coefficients, will be contained in a forthcoming paper.
\end{ex}

\bigskip

\footnotesize

{\sc Acknowledgements.} The authors would like to thank the referee for her/his very careful review and the insightful feedback which helped to improve the article significantly.

\bigskip
\bigskip

\normalsize

\bibliographystyle{amsplain}

\providecommand{\bysame}{\leavevmode\hbox to3em{\hrulefill}\thinspace}
\providecommand{\MR}{\relax\ifhmode\unskip\space\fi MR }
\providecommand{\MRhref}[2]{%
  \href{http://www.ams.org/mathscinet-getitem?mr=#1}{#2}
}
\providecommand{\href}[2]{#2}

\end{document}